\newcommand{\al}{\alpha}
\newcommand{\si}{\sigma}
\newcommand{\id}{\mathrm{id}}
\newcommand{\ot}{\otimes}
\newcommand{\ra}{\rightarrow}
\newcommand{\trl}{\triangleleft}
\newcommand{\trr}{\triangleright}
\def\ppr{\rightharpoonup}
\def\ppl{\leftharpoonup}
\newcommand{\li}{{}_{1}}
\newcommand{\lii}{{}_{2}}
\newcommand{\lmo}{{}_{(0)}} %%%%%=\lmo=\loo=\lmoo \loi=\lmoi \loii=\lmoii
\newcommand{\loo}{{}_{(0)}}
\newcommand{\loi}{{}_{(-1)}}
\newcommand{\lmi}{{}_{(1)}}
\newcommand{\lmoi}{{}_{(-1)}}
\newcommand{\mo}{{}_{(0)}}
\newcommand{\moi}{{}_{(-1)}}
\newcommand{\bi}{{}_{[1]}}
\newcommand{\bii}{{}_{[2]}}
\newcommand{\qi}{{}_{\{1\}}}
\newcommand{\qoi}{{}_{\{-1\}}}
\newcommand{\qoo}{{}_{\{0\}}}
\def\rbiprod{{\cdot\kern-.33em\triangleright\!\!\!<}}
\def\lbiprod{{>\!\!\!\triangleleft\kern-.33em\cdot\, }}
\def\lrbiprod{{\ \cdot\kern-.60em\triangleright\kern-.33em\triangleleft\kern-.33em\cdot\, }}
\def\lprod{{>\!\!\!\triangleleft\kern-.33em\ \, }}
\newcommand{\lrcoprod}{{\,\blacktriangleright\!\!\blacktriangleleft\, }}
\newtheorem{theorem}{Theorem}[section]
\newtheorem{lemma}[theorem]{Lemma}
\theoremstyle{definition}
\newtheorem{definition}[theorem]{Definition}
\title{Extending structures for noncommutative Poisson bialgebras}
\author{Tao Zhang, Fang Yang}
\date{}
\begin{document}
 \maketitle

 \setcounter{section}{0}

\begin{abstract}
We introduce the concept of braided noncommutative Poisson bialgebras. The theory of cocycle bicrossproducts for noncommutative Poisson bialgebras is developed.
As an application, we solve the extending problem for noncommutative Poisson bialgebras by using some non-abelian cohomology theory.
\par\smallskip
{\bf 2020 MSC:} 17B63, 17B62, 16W25.

\par\smallskip
{\bf Keywords:} noncommutative Poisson-Hopf modules, braided noncommutative Poisson bialgebras, cocycle bicrossproduct, extending structure,  non-abelian cohomology.
\end{abstract}

%  The paper
\tableofcontents

\section{Introduction}

The study of Poisson algebra comes from Poisson  geometry. Poisson algebra is an algebra with a Lie algebra structure and a commutative associative algebra structure which are entwined by Leibniz rule. The concept of a noncommutative Poisson algebra was first given by Xu in \cite{X94}, which is especially suitable for geometric situations.
Noncommutative Poisson algebras differ from Poisson algebras in that Poisson algebras require associative algebras to be commutative, whereas noncommutative Poisson algebras do not require associative algebras to be commutative.
Pre-Poisson algebras are investigated by M.Aguiar in \cite{A1}.
Poisson bialgebra consists of a Lie bialgebra together with a infinitesimai bialgebra, satiafying the compatible conditions. Lie bialgebras have been studied
in \cite{Mas00,Z1,Z2}, and infinitesimai bialgebra have been studied
in \cite{A2,Z4}.
The concept of Poisson bialgebras was introduced by Ni and Bai in \cite{Bai11} which related to classical Yang-Baxter equation(CYBE) and associative Yang-Baxter equation(AYBE) uniformly. Liu, Bai and Sheng in \cite{LJ} introduce the notion of a noncommutative Poisson bialgebra and prove a skew-symmetric solution of Poisson Yang-Baxter equation naturally gives a noncommutative Poisson bialgebra.

The theory of extending structure for many types of algebras were well  developed by A. L. Agore and G. Militaru in \cite{AM1,AM2,AM3,AM4,AM5,AM6}.
Let $A$ be an algebra and $E$ a vector space containing $A$ as a subspace.
The extending problem is to describe and classify all algebra structures on $E$ such that $A$ is a subalgebra of $E$.
They show that associated to any extending structure of $A$ by a complement space $V$, there is a  unified product on the direct sum space  $E\cong A\oplus V$.
Recently, extending structures for 3-Lie algebras, Lie bialgebras,  infinitesimal bialgebras, Lie conformal superalgebras and weighted infinitesimal bialgebras were studied  in \cite{Z2,Z3,Z4}.

As a continue of our paper \cite{Z1} and \cite{Z2}, the aim of this paper is to study  extending structures for noncommutative Poisson bialgebras.
For this purpose, we will introduce the concept of braided noncommutative Poisson bialgebras.  Then we give the construction of cocycle bicrossproducts for noncommutative Poisson bialgebras.
We will show that these new concept  and  construction  will play a key role in considering extending problem for noncommutative Poisson bialgebras.
As an application, we solve the extending problem for noncommutative Poisson bialgebras by using some non-abelian cohomology theory.

This paper is organized as follows. In Section 2, we recall some
definitions and fix some notations. In Section 3, we introduced the concept of braided noncommutative Poisson bialgebras and proved the bosonisation
theorem associating braided noncommutative Poisson bialgebras to ordinary noncommutative Poisson bialgebras.
In  section 4, we define the notion of matched pairs of  braided noncommutative Poisson bialgebras
and construct cocycle bicrossproduct noncommutative Poisson bialgebras through two generalized braided noncommutative Poisson bialgebras.
In section 5, we studied the extending problems for noncommutative Poisson bialgebras and proof that they can be classified by some non-abelian cohomology theory.

Throughout the following of this paper, all vector spaces will be over a fixed field of character zero.
A Lie algebra or a Lie coalgebra is denoted by  $(A, [,])$ or $(A, \delta)$ and an associative algebra or  a coassociative coalgebra is denoted by  $(A, \cdot)$ or $(A, \Delta)$.
The identity map of a vector space $V$ is denoted by $\id_V: V\to V$ or simply $\id: V\to V$.
The flip map $\tau: V\ot V\to V\ot V$ is defined by $\tau(u \ot v)=v\ot u$ for any $u, v\in V$.

\section{Preliminaries}

\begin{definition}[\cite{LJ}] \label{dfnlb} A noncommutative Poisson algebra is a triple $(A, \,  [,], \, \cdot)$ where $A$ is a vector space equipped
 with two bilinear operations $[,], ~ \cdot: A\ot A\to A$, such that $(A, \,  [,])$ is a Lie algebra and $(A, \,  \cdot)$  is an associative algebra (not necessarily commutative) and the following
compatibility condition is satisfied,
\begin{equation}\label{eq:LB1}
 [x, y\cdot z]=[x, y]\cdot z+y\cdot[x, z],
 \end{equation}
for all $x, y, z\in A$ .

Sometimes, we just omit $``\cdot"$ in calculation of the following paper for convenience.

Note that the above identities are equivalent to the following identities:
\begin{equation}\label{eq:LB2}
 [x, yz]=[x, y]z+y[x, z].
 \end{equation}
\end{definition}

\begin{definition}(\cite{LJ})
A noncommutative Poisson coalgebra is a triple $(A, \, \delta, \, \Delta)$ where $A$ is a vector space equipped
 with two maps $\delta, ~ \Delta: A\to A\ot A$, such that $(A, \, \delta)$ is a Lie coalgebra and $(A, \, \Delta)$  is a coassociative coalgebra, such that the satisfy the following
compatibile condition :
\begin{equation}\label{eq:LB1}
 (\id\ot\Delta)\delta(x)=(\delta\ot\id)\Delta(x)+(\tau\ot\id)(\id\ot\delta)\Delta(x),
 \end{equation}
for all $x\in {A}$. %Where $\Delta(x)=x\li\ot x\lii, ~ \delta(x)=x\bi\ot x\bii$.
\end{definition}

\begin{definition}(\cite{LJ})
A noncommutative Poisson bialgebra is a 5-triple $(A, \, [,], \, \cdot, \, \delta, \, \Delta)$ where $(A, \, [,], \, \cdot)$ is a noncommutative Poisson algebra, $(A, \, \delta, \, \Delta)$
is a noncommutative Poisson coalgebra, $(A, \, [,], \, \delta)$ is a Lie bialgebra and $(A, \, \cdot, \, \Delta)$ is an antisymmeetric
infinitesimal bialgebra, such that the following compatible conditions hold:
\begin{equation}\label{eq:LB01}
\delta(x  y)=\left(L_x \otimes \mathrm{id}\right) \delta(y)+\left(R_y \otimes \mathrm{id}\right) \delta(x)+\left(\mathrm{id} \otimes \operatorname{ad}_x\right) \Delta(y)+\left(\mathrm{id} \otimes \operatorname{ad}_y\right) \tau\Delta(x),
\end{equation}
\begin{equation}\label{eq:LB02}
\Delta([x, y])=\left(\operatorname{ad}_x \otimes \mathrm{id}+\mathrm{id} \otimes \operatorname{ad}_x\right) \Delta(y)+\left(R_y \otimes \mathrm{id}-\mathrm{id} \otimes L_y\right) \delta(x)
\end{equation}
where $L_x$ and $\operatorname{ad}_x$ are the left multiplication operator and  the adjoint operator defined by $L_x(y) = xy$ and $ad_x(y) = [x, y]$ respectively. Let $R_y$ denote the right multiplication operator, that is, $R_y(x)=xy$.
If we use the sigma notation $\Delta(x)=x\li\ot x\lii, ~ \delta(x)=x\bi\ot x\bii$, then the above two equations \eqref{eq:LB01} and \eqref{eq:LB02} can be written as
\begin{equation}\label{eq:LB1}
 \delta(x y)=x y\bi \ot y\bii+x\bi  y \ot x\bii  +y\li \ot [x, y\lii] +x\lii \ot [y, x\li],
\end{equation}
\begin{equation}\label{eq:LB2}
\Delta ([x, y])=[x, y\li] \ot y\lii +y\li \ot [x, y\lii] +  x\bi y\ot x\bii -x\bi \ot y x\bii,
\end{equation}
for all $x, y\in {A}$ .
\end{definition}
%\begin{remark}
%The definition of Poisson bialgebra was also introducted in \cite [Definition 2.]{Bai11}, where the  compatible conditions were denoted by maps.
%\end{remark}

\begin{definition}(\cite{AM7})
Let ${H}$ be a noncommutative Poisson algebra, $V$ be a vector space. Then $V$ is called a ${H}$-noncommutative Poisson bimodule if there are three linear maps $ \trr : {H}\otimes V \to V, (x, v) \to x \trr v$,  $\ppr :  H\otimes {V} \to V, (x, v) \to x \ppr v$ and $\ppl :  V\otimes {H} \to V, (v, x) \to v \ppl x$  such that  $(V, \, \ppr, \, \ppl)$ is a bimodule of $(H, \, \cdot)$ as associative algebra and $(V, \, \trr)$ is a left module of $(H, \,  [,])$ as Lie algebra, i.e.,
\begin{eqnarray}
  \label{(6)}&&(xy)\ppr v=x\ppr(y\ppr v),\\
  \label{(7)}&&v\ppl (xy)=(v\ppl x)\ppl y,\\
  \label{(8)}&&x\ppr (v\ppl y)=(x\ppr v)\ppl y,\\
  \label{(9)}&&[x, y]\trr v=x\trr(y\trr v)-y\trr(x\trr v),
  \end{eqnarray}
and the following conditions hold:
  \begin{eqnarray}
  \label{(118)}&&(xy) \trr v =x\ppr(y\trr v)+(x\trr v)\ppl y,\\
  \label{(119)}&& [x, y]\ppr v=x\trr(y\ppr v)-y\ppr(x\trr v),\\
  \label{(120)}&& v\ppl [x, y]=x\trr(v\ppl y)-(x\trr v)\ppl y,
\end{eqnarray}
for all $x, y\in {H}$ and $v\in V$.
\end{definition}
The category of noncommutative Poisson bimodules over $H$ is denoted  by ${}_{H}\mathcal{M}{}_{H}$.

\begin{definition}
Let ${H}$ be a noncommutative Poisson coalgebra, $V$ be a vector space. Then $V$ is called a ${H}$-noncommutative Poisson bicomodule if there are three linear maps $\phi: V\to {H}\otimes V$, $\rho: V\to H\otimes {V}$ and
$\gamma: V\to V\otimes {H}$ such that $(V, \, \rho, \, \gamma)$ is a bicomodule of $(H, \, \Delta)$ as coassociative coalgebra  and $(V, \, \phi)$ is a left comodule of $(H, \,  \delta)$ as Lie coalgebra,  i.e.,
\begin{eqnarray}
 \label{(10)}&&\left(\Delta_{H} \otimes \id _{V}\right)\rho(v)=(\id_{H}\ot\rho)\rho(v),\\
 \label{(101)}&&\left(\id _{V}\ot \Delta_{H} \right)\gamma(v)=(\gamma\ot \id_{H})\gamma(v),\\
 \label{(102)}&&\left(\id_{H} \otimes \gamma\right)\rho(v)=(\rho\ot \id_H)\gamma(v),\\
  \label{(11)}&&(\delta_H\ot \id_{V})\phi(v)=(\id_H\ot\phi)\phi(v)-\tau_{12}(\id_H\ot\phi)\phi(v),
\end{eqnarray}
and the following conditions hold:
\begin{eqnarray}\label{comodcoalg12}
&&\left(\id _{V}\ot \Delta_{H} \right)\tau\phi(v)=(\tau\phi\ot \id_{H})\gamma(v)
 +\tau_{12}\left(\id _{H}\ot \tau\phi \right) \rho(v),\\
\label{comodcoalg113}
&&\left(\id_{H}\ot\rho \right)\phi(v)=\left( \delta_H\otimes \id _{V} \right) \rho(v)+\tau_{12}(\id_H\ot\phi)\rho(v),\\
\label{comodcoalg114}
\label{(114)}&&\left(\id_{H}\ot\gamma \right)\phi(v)=\left( \phi\otimes \id _{H} \right) \gamma(v)+\tau_{12}(\id_V\ot\delta_H)\gamma(v).
\end{eqnarray}
If we denote by  $\phi(v)=v_{\langle-1\rangle}\ot v_{\langle0\rangle}$ and $\rho(v)=v\loi\ot v\loo$ and $\gamma(v)=v\loo\ot v\lmi$ then the above equations \eqref{comodcoalg12}, \eqref{comodcoalg113} and \eqref{comodcoalg114} can be written as
\begin{eqnarray}
%\label{(14)}&&\Delta_{H}\left(v_{(-1)}\right) \otimes v_{(0)}=v_{(-1)}\ot\rho(v_{(0)}),\\
%\label{(141)}&& v_{(0)}\ot \Delta_{H}\left(v_{(1)}\right)=\gamma(v_{(0)})\ot v_{(1)},\\
%\label{(142)}&&v_{(-1)} \otimes \gamma(v_{(0)})=\rho(v_{(0)})\ot v_{(1)},\\
%\label{(15)}&&\delta_{H}\left(v_{\langle-1\rangle}\right) \otimes v_{\langle0\rangle}=v_{\langle-1\rangle}\ot\phi(v_{\langle0\rangle})
%    -\tau_{12}(v_{\langle-1\rangle}\ot\phi(v_{\langle0\rangle})),\\
\label{(16)}&&v_{\langle0\rangle}\ot \Delta_{H}\left(v_{\langle-1\rangle}\right) =\tau\phi(v\loo)\ot v\lmi
+\tau_{12}(v\loi\ot \tau\phi(v\loo)),\\
\label{(17)}&&v_{\langle-1\rangle}\ot\rho(v_{\langle0\rangle})=\delta_H(v\loi)\ot v\loo+\tau_{12}(v\loi\ot\phi(v\loo)),\\
\label{(18)}&&v_{\langle-1\rangle}\ot\gamma(v_{\langle0\rangle})=\phi(v\loo)\ot v\lmi+\tau_{12}(v\loo\ot \delta_H(v\lmi)).
\end{eqnarray}
\end{definition}
The category of noncommutative Poisson bicomodules over $H$ is denoted by ${}^{H}\mathcal{M}{}^{H}$.

\begin{definition}\cite{AM7}
Let ${H}$ and  ${A}$ be noncommutative Poisson algebras. An action of ${H}$ on ${A}$ are three linear maps $ \trr : {H}\otimes V \to V, (x, v) \to x \trr v$,  $\ppr :  H\otimes {V} \to V, (x, v) \to x \ppr v$ and $\ppl :  V\otimes {H} \to V, (v, x) \to v \ppl x$  such that
 \begin{enumerate}
\item[(1)]$(A, \, \cdot, \, \ppr, \, \ppl)$ is an $H$-bimodule algebra over $(H, \, \cdot)$,  i.e.,
\begin{eqnarray}
x\ppr (ab)&=&(x\ppr a)b,\\
(ab)\ppl x&=&a(b\ppl x),\\
(a\ppl x)b&=&a(x\ppr b).
\end{eqnarray}
\item[(2)]$(A, \, [, ], \, \trr)$ is a left $H$-module Lie algebra over $(H, \, [, ])$, i.e.,
\begin{eqnarray}
x\trr[a, b]&=&[a, x\trr b]+[x\trr a,b].
\end{eqnarray}
\item[(3)] The following conditions are satisfied:
\begin{eqnarray}
\label{(27)}x\trr (ab)&=&(x\trr a)b+a(x\trr b),\\
\label{(28)}x\ppr[a, b]&=&[a, x\ppr b]+(x\trr a)b,\\
\label{(29)}[a, b]\ppl x&=&[a, b\ppl x]+b(x\trr a),
\end{eqnarray}
\end{enumerate}
for all $x\in {H}$ and $a, b\in {A}. $ In this case, we call $(A, \,  \ppr, \, \ppl, \trr)$ to be an $H$-noncommutative Poisson bimodule  algebra.
\end{definition}

\begin{definition}
Let ${H}$ and  ${A}$ be noncommutayive Poisson coalgebras. A coaction of ${H}$ on ${A}$ are three linear maps $\phi: V\to {H}\otimes V$, $\rho: V\to H\otimes {V}$ and $\gamma: V\to V\otimes {H}$ such that
 \begin{enumerate}
\item[(1)]$(A, \, \Delta_A, \, \rho, \, \gamma)$ is an $H$-bicomodule coalgebra over $(H, \, \Delta_H)$,  i.e.,
\begin{eqnarray}
(\id_H\otimes\Delta_A)\rho(a)=(\rho\otimes\id_A)\Delta_A(a),\\
(\Delta_A\ot \id_H)\gamma(a)=(\id_A\ot \gamma)\Delta_A(a),\\
(\gamma\ot \id_A)\Delta_A(a)=(\id_A\ot \rho)\Delta_A(a).
\end{eqnarray}
\item[(2)]$(A, \, \delta_A, \, \phi)$ is a left $H$-comodule Lie coalgebra over $(H, \, \delta_H)$,  i.e.,
\begin{eqnarray}
(\id_H\otimes\delta_A)\phi(a)=(\phi\otimes\id_A)\delta_A(a)+\tau_{12}(\id_A\otimes\phi)\delta_A(a).
\end{eqnarray}
\item[(3)] The following conditions are satisfied:
\begin{eqnarray}\label{comodcoalg01}
(\id_H \otimes \Delta_{A})\phi(a)&=&(\phi\otimes\id_A) \Delta_{A}(a)+\tau_{12}(\id_{A}\ot\phi) \Delta_{A}(a),\\
\label{comodcoalg02}
(\id_H\ot\delta_{A})\rho(a)&=&\tau_{12}(\id_A\otimes \rho) \delta_{A}(a)+(\phi\ot\id_{A}) \Delta_{A}(a),\\
\label{comodcoalg03}
(\id_A\ot \gamma)\delta_A(a)&=&(\delta_A\ot \id_H)\gamma(a)-\tau_{12}(\id_A\ot \tau\phi)\Delta_A(a).
\end{eqnarray}
\end{enumerate}
If we denote by  $\phi(a)=a_{\langle-1\rangle}\ot a_{\langle0\rangle}$, $\rho(a)=a\loi\ot a\loo$
and $\gamma(a)=a\loo\ot a\lmi$, then the above equations \eqref{comodcoalg01} and \eqref{comodcoalg02} can be written as
\begin{eqnarray}
\label{(18)}&&a_{\langle-1\rangle} \otimes \Delta_{A}\left(a_{\langle0\rangle}\right)=\phi\left(a_{1}\right) \otimes a_{2}
  +\tau_{12}(a\li\ot\phi(a\lii)),\\
\label{(19)}&&a_{(-1)}\ot\delta_{A}(a_{(0)})=\tau_{12}(a\bi\ot\rho(a\bii))+\phi(a\li)\ot a\lii,\\
\label{(20)}&&a\bi\ot \gamma(a\bii)=\delta_A(a\loo)\ot a\lmi-\tau_{12}(a\li\ot \tau\phi(a\lii),
\end{eqnarray}
for all $a\in {A}. $ In this case, we call $(A, \phi, \rho, \gamma)$ to be $H$-noncommutative Poisson bicomodule coalgebras.
\end{definition}

%\subsection{Extending problem for   bialgebras}
\begin{definition}
Let $({A}, \cdot)$ be a given noncommutative Poisson  algebra (Poisson coalgebra, Poisson  bialgebra), $E$ be a vector space.
An extending system of ${A}$ through $V$ is a noncommutative Poisson algebra (Poisson coalgebra, Poisson bialgebra) on $E$
such that $V$ a complement subspace of ${A}$ in $E$, the canonical injection map $i: A\to E, a\mapsto (a, 0)$  or the canonical projection map $p: E\to A, (a, x)\mapsto a$ is a noncommutative Poisson algebra(Poisson coalgebra, Poisson  bialgebra) homomorphism.
%such that ${A}$ is a subspace of $E$ and $V$ a complement of ${A}$ in $E$,
%which fit into the following exact sequence as vector spaces
%\begin{equation}
%\CD
%  0 @>>>  {A} @>i>> E @>\pi>>  {V}  @>>> 0 \\
%\endCD
%\end{equation}
The extending problem is to describe and classify up to an isomorphism  the set of all noncommutative Poisson  algebra(Poisson coalgebra, Poisson  bialgebra) structures that can be defined on $E$.
%such that $({A},\cdot)$ is an algebra or the canonical projection map $p: E\to A$ is an algebra homomorphism.
\end{definition}

We remark that our definition of extending system of ${A}$ through $V$ contains not only extending structure in \cite{AM1,AM2,AM3}
but also the global extension structure in \cite{AM5}.
In fact, the canonical injection map $i: A\to E$ is a noncommutative Poisson (co)algebra homomorphism if and only if $A$ is a noncommutative Poisson sub(co)algebra of $E$.

\begin{definition}
Let ${A}$ be a noncommutative Poisson  algebra (Poisson coalgebra, Poisson  bialgebra), $E$  be a noncommutative Poisson algebra  (Poisson coalgebra, Poisson  bialgebra) such that
${A} $ is a subspace of $E$ and $V$ a complement of
${A} $ in $E$. For a linear map $\varphi: E \to E$ we
consider the diagram:
\begin{equation}\label{eq:ext1}
\xymatrix{
   0  \ar[r]^{} &A \ar[d]_{\id_A} \ar[r]^{i} & E \ar[d]_{\varphi} \ar[r]^{\pi} &V \ar[d]_{\id_V} \ar[r]^{} & 0 \\
   0 \ar[r]^{} & A \ar[r]^{i'} & {E} \ar[r]^{\pi'} & V \ar[r]^{} & 0.
   }
\end{equation}
%\begin{equation}\label{eq:ext1}
%\CD
%  {A} @>i>> {E} @>\pi>>  {V}   \\
% @V {\id_A} VV @V {\varphi} VV @V{\id_V}VV   \\
% {A} @>i>> {E} @>\pi>> {V}
%\endCD
%\end{equation}
where $\pi: E\to V$ are the canonical projection maps and $i: A\to E$ are the inclusion maps.
We say that $\varphi: E \to E$ \emph{stabilizes} ${A}$ if the left square of the diagram \eqref{eq:ext1} is  commutative.
%and $\varphi: E \to E$ \emph{stabilizes} $V$ if the right square of the diagram  \eqref{eq:ext1} is commutative.
Let $(E, \cdot)$ and $(E, \cdot')$ be two noncommutative Poisson  algebra (Poisson coalgebra, Poisson  bialgebra) structures on $E$. $(E, \cdot)$ and $(E, \cdot')$ are called \emph{equivalent}, and we denote this by $(E, \cdot) \equiv (E, \cdot')$, if there exists a noncommutative Poisson algebra (Poisson coalgebra, Poisson  bialgebra) isomorphism $\varphi: (E, \cdot)
\to (E, \cdot')$ which stabilizes ${A} $. Denote by $Extd(E, {A} )$ ($CExtd(E, {A} )$, $BExtd(E, {A} )$) the set of equivalent classes of noncommutative Poisson  algebra (Poisson coalgebra, Poisson  bialgebra) structures on $E$.
\end{definition}

\section{Braided noncommutative Poisson bialgebras}
In this section, we introduce  the concept of noncommutative Poisson-Hopf bimodules and braided noncommutative Poisson bialgebras which will be used in the following sections.

\subsection{Noncommutative Poisson-Hopf bimodules and braided noncommutative Poisson bialgebras}
\begin{definition}
Let $H$ be a noncommutative Poisson bialgebra. A noncommutative Poisson-Hopf bimodule over $H$ is a vector space $V$ endowed with linear maps
\begin{align*}
&\trr: H\otimes V \to V, \quad \ppr: H\otimes V \to V, \quad \ppl: V\otimes H \to V, \\
&\phi: V \to H \otimes V, \quad  \rho: V \to H\otimes V, \quad  \gamma: V \to V\otimes H,
\end{align*}
which are denoted by
\begin{eqnarray*}
&& \trr (x \otimes v) = x \trr v, \quad \ppr(x \otimes v) = x \ppr v, \quad \ppl(x \otimes v) = x \ppl v,\\
&& \phi (v)=\sum v_{\langle-1\rangle}\ot v_{\langle0\rangle}, \quad \rho (v) = \sum v\loi \ot v\loo,
\quad \gamma (v) = \sum v\loo \ot v\lmi,
\end{eqnarray*}
such that $V$ is simultaneously a bimodule, a bicomodule over $H$ and satisfying
 the following compatibility conditions
\begin{enumerate}
\item[(HM1)] $\phi(x\ppr v)=x v_{\langle-1\rangle}\ot v_{\langle0\rangle}+v\loi\ot (x\trr v\loo)-x\lii\ot (x\li\trr v)$,

\item[(HM2)] $\tau\phi(x\ppr v)=(x\ppr v_{\langle0\rangle})\ot v_{\langle-1\rangle}-v\loo\ot[x,v\lmi]-(x\bi\ppr v)\ot x\bii$,

\item[(HM3)] $\phi(v\ppl x)=v_{\langle-1\rangle} x\ot v_{\langle0\rangle}-x\li\ot (x\lii\trr v)+v\lmi\ot (x\trr v\loo),$

\item[(HM4)] $\tau\phi(v\ppl x)=(v\ppl x\bi)\ot x\bii-( v_{\langle0\rangle}\ppl x)\ot v_{\langle-1\rangle}+v\loo\ot [x, v\loi]$,
\end{enumerate}
\begin{enumerate}
\item[(HM5)]  $\rho(x\trr v)=[x, v\loi]\ot v\loo+v\loi\ot (x\trr v\loo)-x\bi\ot (v\ppl x\bii)$,

\item[(HM6)] $\rho(x \trr v)=x\li\ot (x\lii\trr v)+v_{\langle-1\rangle}\ot (x\ppr v_{\langle0\rangle})- v_{\langle-1\rangle}x\ot v_{\langle0\rangle}$,

\item[(HM7)] $\gamma(x\trr v)=(x\trr v\loo)\ot v\lmi+v\loo\ot[x, v\lmi]+(x\bi\ppr v)\ot x\bii$,

\item[(HM8)] $\gamma(x \trr v)=(x\li\trr v)\ot x\lii-v_{\langle0\rangle}\ot x v_{\langle-1\rangle}+(v_{\langle0\rangle}\ppl x)\ot v_{\langle-1\rangle}$,
\end{enumerate}
for all $x\in H$ and $v\in V$.
\end{definition}
We denote  the  category of noncommutative Poisson-Hopf bimodules over $H$ by ${}^{H}_{H}\mathcal{M}{}^{H}_{H}$.

\begin{definition} Let $H$  be a noncommutative Poisson bialgebra, $A$ be simultaneously a $H$-bimodule algebra (coalgebra) and $H$-bicomodule algebra (coalgebra).
We call $A$ to be a \emph{braided noncommutative Poisson bialgebra}, if the following conditions are  satisfied
\begin{enumerate}
\item[(BB1)]
$\delta_{A}(a b)=a\bi b\ot a\bii +a b\bi\ot b\bii +b\li\ot [a,b\lii] +a\lii\ot[b, a\li]+(a_{\langle-1\rangle}\ppr b)\ot a_{\langle0\rangle}\\
+(a\ppl b_{\langle-1\rangle})\ot b_{\langle0\rangle}-b\loo\ot (b\lmi\trr a)-a\loo\ot (a\loi\trr b),$
\item[(BB2)]
$\Delta_{A}([a, b])=[a, b\li]\ot b\lii+b\li\ot[a, b\lii]-a\bi\ot b a\bii+ a\bi b\ot a\bii\\
+a_{\langle0\rangle}\ot (b\ppl a_{\langle-1\rangle})+(a_{\langle-1\rangle}\ppr b)\ot a_{\langle0\rangle}-(b\loi\trr a)\ot b\loo-b\loo\ot (b\lmi\trr a).$
\end{enumerate}
\end{definition}

Now we construct noncommutative Poisson bialgebras from braided noncommutative Poisson  bialgebras.
Let $H$  be a noncommutative Poisson bialgebra, $A$ be a noncommutative Poisson algebra and a noncommutative Poisson coalgebra in ${}^{H}_{H}\mathcal{M}{}_{H}^{H}$.
We define multiplications and comultiplications on the direct sum vector space $E: =A \oplus H$ by
\begin{eqnarray}
\label{sp01}
&[(a, x), (b, y)]_{E}: =([a, b] +x\trr b-y \trr a, [x, y]), \\
\label{sp02}
&\delta_{E}(a, x): =\delta_{A}(a)+\phi(a)-\tau\phi(a)+\delta_{H}(x), \\
\label{sp03}
&(a, x) \cdot_{E} (b, y): =(a b+x\rightharpoonup b+a\ppl y, x y), \\
\label{sp04}
&\Delta_{E}(a, x): =\Delta_{A}(a)+\rho(a)+\gamma(a)+\Delta_{H}(x).
\end{eqnarray}
This is called biproduct of ${A}$ and ${H}$ which will be  denoted  by $A\lbiprod H$.

\begin{theorem} Let $H$  be a noncommutative Poisson bialgebra, $A$ be a noncommutative Poisson algebra and a noncommutative Poisson coalgebra in ${}^{H}_{H}\mathcal{M}{}_{H}^{H}$.
Then the biproduct $A\lbiprod H$ forms a noncommutative Poisson bialgebra if and only if  $A$ is a braided noncommutative Poisson bialgebra in ${}^{H}_{H}\mathcal{M}{}_{H}^{H}$.
\end{theorem}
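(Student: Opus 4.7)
The plan is to prove the biconditional by checking, axiom by axiom, that the noncommutative Poisson bialgebra structure on $E=A\oplus H$ defined by \eqref{sp01}--\eqref{sp04} decomposes into three disjoint packages of identities: (i) the internal noncommutative Poisson bialgebra axioms of $H$ (which hold by assumption), (ii) the module/comodule/bimodule/bicomodule compatibilities of $A$ with $H$ that are already part of the data $A\in{}^H_H\mathcal{M}^H_H$, and (iii) two residual packages — one coming from purely $A$-inputs, which will turn out to be exactly \textbf{(BB1)}--\textbf{(BB2)}, and one coming from mixed $(a,0),(0,x)$ inputs, which will turn out to be exactly \textbf{(HM1)}--\textbf{(HM8)} together with the Poisson-Hopf-bimodule axioms. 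Since the latter two packages are equivalent to $A$ being a braided noncommutative Poisson bialgebra in ${}^H_H\mathcal{M}^H_H$, the biconditional follows.

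Concretely, I would proceed in four stages. First, I would verify that $(E,[,]_E,\cdot_E)$ is a noncommutative Poisson algebra: associativity of $\cdot_E$ and the Jacobi identity of $[,]_E$ split, on pure and mixed inputs, into associativity/Jacobi in $A$ and $H$ together with the $H$-bimodule algebra and $H$-module Lie algebra axioms; the Leibniz compatibility \eqref{eq:LB1} splits likewise, invoking \eqref{(27)}--\eqref{(29)}. Dually, that $(E,\delta_E,\Delta_E)$ is a noncommutative Poisson coalgebra follows from $A$ being a noncommutative Poisson coalgebra in the bicomodule sense, using \eqref{comodcoalg01}--\eqref{comodcoalg03}. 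These two stages use nothing about \textbf{(BB1)}--\textbf{(BB2)} and nothing about the crossed compatibilities \textbf{(HM1)}--\textbf{(HM8)}; they only need the "coalgebra in a module category" and "algebra in a comodule category" data.

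Second, I would tackle the two nontrivial Poisson bialgebra compatibilities \eqref{eq:LB01}--\eqref{eq:LB02} on $E$, evaluating each of them on the four pairs of inputs $\{(a,0),(b,0)\}$, $\{(0,x),(0,y)\}$, $\{(a,0),(0,y)\}$, $\{(0,x),(b,0)\}$. The $H$-$H$ case reproduces the Poisson bialgebra axioms of $H$ itself. The $A$-$A$ case, after substituting the definitions of $\delta_E$ and $\Delta_E$ and projecting onto $A\otimes A$ and $A\otimes H+H\otimes A$ separately, produces precisely \textbf{(BB1)} (from expanding $\delta_E((a,0)\cdot_E(b,0))$) and \textbf{(BB2)} (from expanding $\Delta_E([(a,0),(b,0)]_E)$). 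The mixed cases are the real work: expanding $\delta_E(x\trr a)$ and $\Delta_E(x\trr a)$ etc. and projecting onto $H\otimes A$, $A\otimes H$, $H\otimes H$ yields systems of identities that, after a careful accounting, are exactly the eight crossed relations \textbf{(HM1)}--\textbf{(HM8)}. The "only if" direction is then just the observation that conversely, substituting pure-$A$ and mixed inputs into the Poisson bialgebra axioms of $E$ produces these identities as necessary conditions.

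The hard part will be the mixed-input bookkeeping in the second stage. Each of the two compatibilities \eqref{eq:LB01}--\eqref{eq:LB02}, applied to a mixed pair, unfolds into four Sweedler-type terms from $\delta_E$ or $\Delta_E$ plus the action/coaction terms; these have to be projected onto $A\otimes A$, $A\otimes H$, $H\otimes A$, and $H\otimes H$ summands, and matched tensor-factor by tensor-factor against \textbf{(HM1)}--\textbf{(HM8)}. I expect that each \textbf{(HMi)} arises from exactly one projection of exactly one of the two compatibilities on exactly one of the two mixed inputs, giving a clean $2\times 2\times 2=8$ correspondence. Once this matching is established, the proof reduces to verifying, for each of these eight cases, a single Sweedler identity, which is routine given the data. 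I would not carry out all eight expansions explicitly but would display one representative computation (say, the $(a,0)\cdot_E(0,y)$ projection of \eqref{eq:LB01} yielding \textbf{(HM1)} and \textbf{(HM3)}) and indicate that the remaining cases are entirely analogous.
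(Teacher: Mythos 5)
Your proposal is correct and follows essentially the same route as the paper: verify the algebra and coalgebra structures on $E$ from the (bi)module/(bi)comodule data, then expand both sides of the two bialgebra compatibility conditions and match terms, with the pure $A$-inputs yielding (BB1)--(BB2) and the mixed inputs yielding (HM1)--(HM8) together with the module-algebra/comodule-coalgebra conditions. Your organization by input type and tensor-summand projection is just a cleaner bookkeeping of the paper's single general-element expansion (note only that your illustrative example misattributes which mixed input produces (HM1) versus (HM3); the $(0,x)\cdot_E(b,0)$ input gives (HM1)--(HM2) and the $(a,0)\cdot_E(0,y)$ input gives (HM3)--(HM4)).
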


\begin{proof}
First, it is obvious that $(A\lbiprod H, ~  [ , ])$  and  $(A\lbiprod H, ~ \cdot)$  are respectively a Lie algebra and an associative algebra. It is easy to prove that $A\lbiprod H$ is a noncommutative Poisson algebra and a noncommutative Poisson coalgebra with the multiplications \eqref{sp01} and \eqref{sp03} and comultiplications  \eqref{sp02} and \eqref{sp04}. Now we show the compatibility conditions:
$$
\begin{aligned}
\delta_{E}((a, x)\cdot_{E} (b,y))=& (a, x)\cdot_{E} (b, y)\bi \ot (b, y)\bii +(a, x)\bi \cdot_{E} (b, y) \ot (a, x)\bii \\
&+(b, y)\li \ot [(a, x), (b, y)\lii]_{E} +(a, x)\lii \ot [(b, y), (a, x)\li]_{E},
 \end{aligned}
$$
$$
\begin{aligned}
\Delta_{E} ([(a, x), (b, y)]_{E})=&[(a, x), (b, y)\li]_{E} \ot (b, y)\lii +(b, y)\li \ot [(a, x), (b, y)\lii]_{E} \\
&-(a, x)\bi \ot (b, y)\cdot_{E} (a, x)\bii+ (a, x)\bi\cdot_{E} (b, y) \ot (a, x)\bii .
\end{aligned}
$$
By direct computations, the left hand side of the first equation is equal to
\begin{eqnarray*}
&&\delta_{E}((a, x)\cdot_{E} (b, y))\\
&=&\delta_{E}(a b+x\rightharpoonup b +a\ppl y, x y)\\
&=&\delta_{A}(a b)+ \delta_{A}(x\rightharpoonup b) +\delta_{A}(a\ppl y) +\phi(a b) +\phi(x\rightharpoonup b) +\phi(a\ppl y)\\
&& -\tau\phi(a b) -\tau\phi(x\rightharpoonup b) -\tau\phi(a\ppl y) +\delta_{H}(x y),
\end{eqnarray*}
and the right hand side is equal to
\begin{eqnarray*}
&&(a, x)\cdot_{E} (b, y)\bi \ot (b, y)\bii +(a, x)\bi \cdot_{E} (b, y) \ot (a, x)\bii \\
&&+(b, y)\li \ot [(a, x), (b, y)\lii]_{E} +(a, x)\lii \ot [(b, y), (a, x)\li]_{E}\\
&=&a b\bi\ot b\bii+(x\ppr b\bi)\ot b\bii+(a\ppl b_{\langle-1\rangle})\ot b_{\langle0\rangle}+x b_{\langle-1\rangle}\ot b_{\langle0\rangle}\\
&&-a b_{\langle0\rangle}\ot b_{\langle-1\rangle}-(x\ppr b_{\langle0\rangle})\ot b_{\langle-1\rangle}+(a\ppl y\bi)\ot y\bii +x y\bi\ot y\bii\\
&&+a\bi b\ot a\bii +( a\bi\ppl y)\ot a\bii +(a_{\langle-1\rangle}\ppr b)\ot a_{\langle0\rangle} +a_{\langle-1\rangle} y\ot a_{\langle0\rangle}\\
&&- a_{\langle0\rangle} b\ot a_{\langle-1\rangle}-( a_{\langle0\rangle}\ppl y)\ot a_{\langle-1\rangle} +(x\bi\ppr b)\ot x\bii +x\bi y\ot x\bii\\
&&+b\li\ot [a, b\lii]+b\li\ot (x\trr b\lii)+b\loi\ot [a, b\loo]+b\loi\ot (x\trr b\loo)\\
&&+b\loo\ot [x, b\lmi]-b\loo\ot (b\lmi\trr a)+y\li\ot [x, y\lii]-y\li\ot (y\lii\trr a)\\
&&+a\lii\ot [b, a\li]+a\lii\ot (y\trr a\li)+a\loo\ot [y, a\loi]-a\loo\ot (a\loi\trr b)\\
&&+a\lmi\ot[b, a\loo]+a\lmi\ot (y\trr a\loo)+x\lii\ot[y, x\li]-x\lii\ot (x\li\trr b).
\end{eqnarray*}
Then the two sides are equal to each other if and only if

(1)$\delta_{A}(a b)=a\bi b\ot a\bii +a b\bi\ot b\bii +b\li\ot [a,b\lii] +a\lii\ot[b, a\li]+(a_{\langle-1\rangle}\ppr b)\ot a_{\langle0\rangle}$

$\qquad+(a\ppl b_{\langle-1\rangle})\ot b_{\langle0\rangle}-b\loo\ot (b\lmi\trr a)-a\loo\ot (a\loi\trr b),$

(2) $\delta_{A}(x \ppr b)=(x\ppr b\bi)\ot b\bii+b\li\ot (x\trr b\lii)$,

(3) $\delta_{A}(a\ppl y)=( a\bi\ppl y)\ot a\bii+a\lii\ot (y\trr a\li)$,

(4) $\phi(a b)=b\loi\ot[a, b\loo]+a\lmi\ot[b, a\loo]$,

(5) $\tau\phi(a b)=a_{\langle0\rangle} b\ot a_{\langle-1\rangle}+a b_{\langle0\rangle}\ot b_{\langle-1\rangle}$,

(6) $\phi(x\ppr b)=x b_{\langle-1\rangle}\ot b_{\langle0\rangle}+b\loi\ot (x\trr b\loo)-x\lii\ot (x\li\trr b)$,

(7) $\tau\phi(x\ppr b)=(x\ppr b_{\langle0\rangle})\ot b_{\langle-1\rangle}-b\loo\ot[x,b\lmi]-(x\bi\ppr b)\ot x\bii$,

(8) $\phi(a\ppl y)=a_{\langle-1\rangle} y\ot a_{\langle0\rangle}-y\li\ot (y\lii\trr a)+a\lmi\ot (y\trr a\loo)$,

(9) $\tau\phi(a\ppl y)=(a\ppl y\bi)\ot y\bii-( a_{\langle0\rangle}\ppl y)\ot a_{\langle-1\rangle}+a\loo\ot [y, a\loi]$.

For the second equation, the left hand side is equal to
$$\begin{aligned}
&\Delta_{E}[(a, x), (b, y)]_{E}\\
=&\Delta_{E}([a, b]+x\trr b -y\trr a , [x, y])\\
=&\Delta_{A}([a, b])+ \Delta_{A}(x\trr b) -\Delta_{A}(y\trr a) +\rho([a, b]) +\rho(x\trr b) -\rho(y\trr a)\\
& +\gamma([a, b]) +\gamma(x\trr b) -\gamma(y\trr a) +\Delta_{H}([x, y]),
\end{aligned}
$$
and the right hand side is equal to
\begin{eqnarray*}
&&[(a, x), (b, y)\li]_{E} \ot (b, y)\lii +(b, y)\li \ot [(a, x), (b, y)\lii]_{E} \\
&&-(a, x)\bi \ot (b, y)\cdot_{E} (a, x)\bii+ (a, x)\bi\cdot_{E} (b, y) \ot (a, x)\bii\\
&=&[a, b\li]\ot b\lii+(x\trr b\li)\ot b\lii+[x, b\loi]\ot b\loo-(b\loi\trr a)\ot b\loo\\
&&+[a, b\loo]\ot b\lmi +(x\trr b\loo)\ot b\lmi+[x, y\li]\ot y\lii-(y\li\trr a)\ot y\lii\\
&&+b\li\ot [a, b\lii]+b\li\ot (x\trr b\lii)+b\loi\ot [a, b\loo]+b\loi\ot( x\trr b\loo)\\
&&+b\loo\ot[x, b\lmi]-b\loo\ot (b\lmi\trr a)+y\li\ot[x, y\lii]-y\li\ot (y\lii\trr a)\\
&&-a\bi\ot b a\bii-a\bi\ot (y\ppr a\bii)-a_{\langle-1\rangle}\ot b a_{\langle0\rangle}-a_{\langle-1\rangle}\ot (y\ppr a_{\langle0\rangle})\\
&&+a_{\langle0\rangle}\ot y a_{\langle-1\rangle}
+a_{\langle0\rangle}\ot (b\ppl a_{\langle-1\rangle})-x\bi\ot y x\bii-x\bi\ot (b\ppl x\bii)\\
&&+ a\bi b\ot a\bii+(a\bi\ppl y)\ot a\bii+(a_{\langle-1\rangle}\ppr b)\ot a_{\langle0\rangle}
+a_{\langle-1\rangle} y\ot a_{\langle0\rangle}\\
&&-a_{\langle0\rangle} b\ot a_{\langle-1\rangle}-(a_{\langle0\rangle}\ppl y)\ot a_{\langle-1\rangle}
+x\bi y\ot x\bii+(x\bi\ppr b)\ot x\bii.
\end{eqnarray*}
Then the two sides are equal to each other if and only if

(10) $\Delta_{A}([a, b])=[a, b\li]\ot b\lii+b\li\ot[a, b\lii]-a\bi\ot b a\bii+ a\bi b\ot a\bii$

$\qquad+a_{\langle0\rangle}\ot (b\ppl a_{\langle-1\rangle})+(a_{\langle-1\rangle}\ppr b)\ot a_{\langle0\rangle}-(b\loi\trr a)\ot b\loo-b\loo\ot (b\lmi\trr a),$

(11) $\Delta_{A}(x \trr b)=(x\trr b\li)\ot b\lii+b\li\ot (x\trr b\lii)$,

(12) $\Delta_{A}(y \trr a)=a\bi\ot (y\ppr a\bii)-(a\bi\ppl y)\ot a\bii$,

(13) $\rho([a,b])=b\loi\ot[a, b\loo]-a_{\langle-1\rangle}\ot b a_{\langle0\rangle}$,

(14) $\gamma([a,b])=[a, b\loo]\ot b\lmi-a_{\langle0\rangle} b\ot a_{\langle-1\rangle}$,

(15) $\rho(x\trr b)=[x, b\loi]\ot b\loo+b\loi\ot (x\trr b\loo)-x\bi\ot (b\ppl x\bii)$,

(16) $\rho(y \trr a)=y\li\ot (y\lii\trr a)+a_{\langle-1\rangle}\ot (y\ppr a_{\langle0\rangle})- a_{\langle-1\rangle}y\ot a_{\langle0\rangle}$,

(17) $\gamma(x\trr b)=(x\trr b\loo)\ot b\lmi+b\loo\ot[x, b\lmi]+(x\bi\ppr b)\ot x\bii$,

(18) $\gamma(y \trr a)=(y\li\trr a)\ot y\lii-a_{\langle0\rangle}\ot y a_{\langle-1\rangle}+(a_{\langle0\rangle}\ppl y)\ot a_{\langle-1\rangle}$.

\noindent From (2)--(5) and (11)--(14) we have that $A$ is a noncommutative Poisson algebra and a noncommutative Poisson coalgebra in ${}^{H}_{H}\mathcal{M}{}_{H}^{H}$,
from (6)--(9) and (15)--(18) we get that $A$ is a noncommutative Poisson-Hopf bimodule over $H$, and (1) together with (19) are the conditions for $A$ to be a braided noncommutative Poisson bialgebra.

The proof is completed.
\end{proof}

\section{Unified product of noncommutative Poisson bialgebras}
\subsection{Matched pair of braided noncommutative Poisson bialgebras}

In this section, we construct noncommutative Poisson bialgebra from the double cross biproduct of a matched pair of braided noncommutative Poisson bialgebras.

Let $A, H$ be both noncommutative Poisson  algebras and noncommutative Poisson  coalgebras.  For $a, b\in A$, $x, y\in H$,  we denote linear maps
\begin{align*}
&\ppr: H \otimes A \to A, \quad \ppl: A\otimes H\to A,\\
&\ra: A\ot H \to H, \quad\leftarrow: H\ot A\to H,\\
&\trr: H\otimes A \to A, \quad \trl: H\otimes A \to H,\\
&\rho: A  \to H\otimes A, \quad  \gamma: A \to A \otimes H,\\
&\al : H\to A\ot H, \quad\beta: H\to H\ot A,\\
&\phi: A \to H \otimes A, \quad  \psi: H \to H\otimes A,
\end{align*}
by
\begin{eqnarray*}
&& \ppr (x \otimes a) = x \ppr a, \quad \ppl(a\otimes x) = a \ppl x, \\
&& \ra(a\ot x) = a\ra x, \quad \leftarrow(x\ot a) = x\leftarrow a,\\
&& \trr (x \otimes a) = x \trr a, \quad \trl(x \otimes a) = x \triangleleft a, \\
&& \rho (a)=\sum a\loi\ot a\loo, \quad \gamma (a) = \sum a\loo \ot a\lmi,\\
&& \al (x)=\sum x\qoi\ot x\qoo, \quad \beta(x)=\sum x\qoo\ot x\qi,\\
&& \phi (a)=\sum a_{\langle-1\rangle}\ot a_{\langle0\rangle}, \quad \psi (x) = \sum x_{\langle0\rangle} \ot x_{\langle1\rangle}.
\end{eqnarray*}

\begin{definition}(\cite{Bai11})
A \emph{matched pair} of  noncommutative Poisson algebras is a system $(A, \, {H}, \, \trl,\\
 \, \trr, \, \ppl,
 \, \ppr, \, \leftarrow, \, \ra)$ consisting of two noncommutative Poisson algebras $A$ and ${H}$ and six bilinear maps $\triangleleft : {H}\otimes A\to {H}$, $\trr : {H} \otimes A
\to A$, $\ppl: A \otimes {H} \to A$, $\ppr: H\otimes {A} \to {A}$, $\leftarrow: H\ot A\to H$, $\ra: A\ot H \to H$ such that  $(A, H, \trr, \trl)$  is a matched pair of Lie algebras,
 $(A, H, \ppr, \ppl, \ra, \leftarrow )$ is a matched pair of associative algebras,
%$({H}, \trl, \ppl)$ is a  right $A$-module,  $(A, \trr, \ppr)$ ia a left ${H}$-module
and  the following compatibility conditions are satisfied for all $a, b\in A$, $x, y \in {H}$:
\begin{enumerate}
\item[(AM1)] $x\ppr[a, b]=[a, x\ppr b]-(x\leftarrow b)\trr a+(x\trr a)b+(x\trl a)\ppr b$,

\item[(AM2)] $[a, b]\ppl x=[a, b\ppl x]-(b\rightarrow x)\trr a+b(x\trr a)+b\ppl (x\trl a)$,

\item[(AM3)] $x\trr(ab)=(x\trr a)b+(x\trl a)\ppr b+a(x\trr b)+a\ppl (x\trl b)$,

\item[(AM4)]$[x, y]\leftarrow a=[x, y\leftarrow a]+x\trl(y\ppr a)-y(x\trl a)-y\leftarrow(x\trr a)$,

\item[(AM5)] $a\rightarrow [x, y]=[x, a\rightarrow y]+x\trl(a\ppl y)-(x\trl a)y-(x\trr a)\rightarrow y$,

\item[(AM6)] $(xy)\trl a=(x\trl a)y+(x\trr a)\rightarrow y+x(y\trl a)+x\leftarrow (y\trr a)$.
\end{enumerate}
\end{definition}

\begin{lemma}(\cite{Bai11})
Let  $(A, \, {H}, \, \trl, \, \trr, \, \ppl, \, \ppr, \, \leftarrow, \, \ra)$ be a matched pair of noncommutative Poisson algebras.
Then $A \, \bowtie {H}: = A \oplus  {H}$, as a vector space, with the multiplication defined for any $a, b\in A$ and $x, y\in {H}$ by
$$
\begin{aligned}
&[(a, x), (b, y)]_{E}: =([a, b] +x\trr b-y \trr a, [x, y]+x\trl b-y\trl a), \\
&(a, x) \cdot_{E} (b, y): =(a b+x\rightharpoonup b+a\ppl y, x y+x\leftarrow b+a\rightarrow y),
\end{aligned}
$$
is a noncommutative Poisson algebra which is called the \emph{bicrossed product} associated to the matched pair of noncommutative Poisson  algebras $A$ and ${H}$.
\end{lemma}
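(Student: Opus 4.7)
The plan is to verify the three defining axioms of a noncommutative Poisson algebra on $E = A\oplus H$ with the operations defined in the lemma: that $(E, [\,,\,]_E)$ is a Lie algebra, that $(E, \cdot_E)$ is an associative algebra, and that the Leibniz identity $[u, v\cdot_E w]_E = [u,v]_E\cdot_E w + v\cdot_E [u,w]_E$ holds for all $u,v,w\in E$. The first two axioms are already packaged in the hypotheses: the Lie bracket on $E$ is precisely the one built from the matched pair of Lie algebras $(A,H,\trr,\trl)$, which is well known to define a Lie algebra on $A\oplus H$; similarly $\cdot_E$ is the bicrossed product associative multiplication attached to the matched pair of associative algebras $(A,H,\ppr,\ppl,\ra,\leftarrow)$. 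So the entire content of the lemma sits in the Poisson compatibility.

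To check the Leibniz identity, I would write each element of $E$ as a pair and use bilinearity to reduce to four cases according to which summand each of $u,v,w$ lies in, namely $(a,0),(b,0),(c,0)$, then $(a,0),(b,0),(0,z)$, then $(a,0),(0,y),(0,z)$, and $(0,x),(b,0),(c,0)$ (the remaining case $(0,x),(0,y),(0,z)$ reduces to the Leibniz rule inside $H$, and in fact three of the eight cases reduce directly to the Leibniz rule inside $A$ or $H$). For each mixed case I would expand both sides using the definitions of $[\,,\,]_E$ and $\cdot_E$, separate the output into its $A$-component and $H$-component, and compare. The first-component equations one obtains after cancellation are exactly (AM1), (AM2), (AM3), while the second-component equations are exactly (AM4), (AM5), (AM6); conversely, those six axioms are precisely designed so that all residual terms on the two sides match.

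Concretely, the key move in one typical case is this: take $u=(0,x)$, $v=(b,0)$, $w=(c,0)$. The left-hand side expands as
\begin{equation*}
[(0,x),\,(bc,\,x\leftarrow b\text{ absent here})]_E = \bigl(x\trr(bc),\; x\trl(bc)\bigr),
\end{equation*}
while the right-hand side expands to a sum of Lie-algebra-action-on-associative-product terms. Equating the $A$-components yields (AM3) and equating the $H$-components yields (AM6). An entirely parallel analysis on the remaining mixed cases produces (AM1), (AM2), (AM4), (AM5), and the Poisson compatibilities already present inside $A$ and $H$ take care of the purely $A$-valued and purely $H$-valued cases.

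The hard part is purely organizational: the expansion of $[(a,x),(b,y)\cdot_E(c,z)]_E$ produces many mixed terms, and one must keep track of the $A$-part and $H$-part separately while respecting the signs that come from the Lie bracket $[u,v]_E=-[v,u]_E$. The cleanest bookkeeping is to do the four mixed cases one at a time rather than attempting the general expansion in one stroke, because then the matching with (AM1)--(AM6) is term-by-term and no cancellation miracles are needed. Once all six axioms of the matched pair of noncommutative Poisson algebras have been consumed exactly once each, the Leibniz identity on $E$ is established and the lemma follows.
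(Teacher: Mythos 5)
Your overall strategy --- take the Lie and associative structures for granted from the two underlying matched pairs and reduce everything to the Leibniz identity, checked case by case on $A\oplus H$ --- is the right one, and it is essentially how the paper handles the cocycle generalization (the lemma on $A_{\sigma,\omega}\#_{\theta,\nu}H$, where both sides of $[(a,x),(b,y)\cdot_E(c,z)]_E=[(a,x),(b,y)]_E\cdot_E(c,z)+(b,y)\cdot_E[(a,x),(c,z)]_E$ are expanded for general elements and compared term by term). However, your bookkeeping has a genuine gap. There are six mixed cases, not four: besides those you list you must also treat $u=(a,0),\,v=(0,y),\,w=(c,0)$; $u=(0,x),\,v=(b,0),\,w=(0,z)$; and $u=(0,x),\,v=(0,y),\,w=(c,0)$ --- and these omitted cases are precisely the ones whose components produce (AM1), (AM5) and (AM4). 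Only the two pure cases reduce to the Leibniz rules inside $A$ and $H$, not three. Moreover, in the one case you do work out ($u=(0,x)$, $v=(b,0)$, $w=(c,0)$) the $H$-component is not (AM6); it is the identity $x\trl(bc)=(x\trl b)\leftarrow c+b\rightarrow(x\trl c)$, while (AM6) actually arises from the case $u=(a,0)$, $v=(0,x)$, $w=(0,y)$.

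The deeper problem is your closing claim that the six axioms (AM1)--(AM6) are ``consumed exactly once each'' and suffice. Each of the six mixed cases yields two component identities, hence twelve in all, and only six of them are (AM1)--(AM6). The remaining six, namely
$(xy)\trr a=(x\trr a)\ppl y+x\ppr(y\trr a)$,\ \ $x\trr(y\ppr a)=[x,y]\ppr a+y\ppr(x\trr a)$,\ \ $x\trr(a\ppl y)=a\ppl[x,y]+(x\trr a)\ppl y$,\ \ $x\trl(ab)=(x\trl a)\leftarrow b+a\rightarrow(x\trl b)$,\ \ $(x\trl a)\leftarrow b-(x\leftarrow b)\trl a=x\leftarrow[a,b]$\ \ and\ \ $b\rightarrow(x\trl a)-(b\rightarrow x)\trl a=[a,b]\rightarrow x$,
each mix the Lie-type actions with the associative-type ones, so they follow neither from the matched pair of Lie algebras, nor from the matched pair of associative algebras, nor from (AM1)--(AM6). (They are exactly conditions (CP3), (CP5), (CP6), (CP9), (CP11), (CP12) of the cocycle cross product system with all cocycles set to zero, and they reappear as (B3), (B5), (B6), (B9), (B11), (B12) in the type (a2) extending structure.) Your argument must either include these six identities as part of the matched-pair hypothesis --- as the original source does --- or explain why they hold, which it cannot as written; until then the Leibniz identity on $E$ is not established.
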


Now we introduce the notion of matched pairs of noncommutative Poisson coalgebras, which is the dual version of  matched pairs of noncommutative Poisson algebras.

\begin{definition} A \emph{matched pair} of noncommutative Poisson  coalgebras is a system $(A, \, {H}, \, \phi, \,\\ \psi, \, \rho, \, \gamma, \, \al, \, \beta)$ consisting
of two noncommutative Poisson  coalgebras $A$ and ${H}$ and six bilinear maps
$\phi: {A}\to H\otimes A$, $\psi: {H}\to H \otimes A$, $\rho: A\to H\otimes {A}$, $\gamma: A \to {A} \ot {H}$,
$\al: H\ot A\ot H$, $\beta: H\to H\to A$
such that
 $(A, \, {H}, \, \phi, \, \psi)$ is a matched pair of Lie coalgebras,
 $(A, \, {H}, \, \rho, \, \gamma, \, \al, \, \beta)$ is a matched pair of coassociative coalgebras,
%and $({H}, \psi ,\gamma)$ is a right $A$-comodule,  $(A, \phi , \rho)$ ia a  left ${H}$-comodule
and the following compatibility conditions are satisfied for any $a\in A$, $x\in {H}$:
\begin{enumerate}
\item[(CM1)]
$a\bi\ot\rho(a\bii)-a_{\langle0\rangle}\ot\beta(a_{\langle-1\rangle})=-\tau\phi(a\li)\ot a\lii-\tau\psi(a\loi)\ot a\loo
+\tau_{12}(a\loi\ot \delta_A(a\loo))$,

\item[(CM2)]
$a_{\langle-1\rangle}\ot\Delta_A(a_{\langle0\rangle})=\phi(a\li)\ot a\lii+\psi(a\loi)\ot a\loo+\tau_{12}(a\li\ot\phi(a\lii))+\tau_{12}(a\loo\ot\psi(a\lmi))$,

\item[(CM3)]
$a\bi\ot \gamma(a\bii)-a_{\langle0\rangle}\ot \alpha(a_{\langle-1\rangle})
=\delta_A(a\loo)\ot a\lmi-\tau_{12}(a\li\ot \tau\phi(a\lii))-\tau_{12}(a\loo\ot \tau\psi(a\lmi))$,

\item[(CM4)]
$x_{[1]} \otimes \beta\left(x_{[2]}\right)+x_{\langle0\rangle}\ot\rho(x_{\langle1\rangle})
=\delta_H(x\qoo)\ot x\qi+\tau_{12}(x\li\ot\psi(x\lii))+\tau_{12}(x\qoo\ot\phi(x\qi))$,

\item[(CM5)]
$ x_{\langle1\rangle}\ot\Delta_{H}(x_{\langle0\rangle})=\tau\psi(x\li)\ot x\lii+\tau\phi(x\qoi)\ot x\qoo
+\tau_{12}(x\li\ot\tau\psi(x\lii))+\tau_{12}(x\qoo\ot\tau\phi(x\qi))$,

\item[(CM6)]
$x_{[1]} \otimes \alpha\left(x_{[2]}\right)+x_{\langle0\rangle}\ot\gamma(x_{\langle1\rangle})
=\psi(x\li)\ot x\lii+\phi(x\qoi)\ot x\qoo+\tau_{12}(x\qoi\ot \delta_H(x\qoo))$.
\end{enumerate}
\end{definition}

\begin{lemma}\label{lem1} Let $(A, H)$ be a matched pair of noncommutative Poisson coalgebras. We define $E=A\lrcoprod H$ as the vector space $A\oplus H$ with   comultiplication
$$\Delta_{E}(a)=(\Delta_{A}+\rho+\gamma)(a), \quad\Delta_{E}(x)=(\Delta_{H}+\alpha+\beta)(x),$$
$$\delta_E(a)=(\delta_A+\phi-\tau\phi)(a), \quad\delta_E(x)=(\delta_H(x)+\psi-\tau\psi)(x),$$
that is
$$
\begin{aligned}
&\Delta_{E}(a)=\sum a\li \ot a\lii+\sum a\loi \ot a\loo+\sum a\mo\ot a\lmi, \\
&\Delta_{E}(x)=\sum x\li \ot x\lii+\sum  x\qoi \ot x\qoo+\sum x\qoo \ot x\qi,\\
&\delta_E(a)=\sum a\bi\ot a\bii+\sum a_{\langle-1\rangle}\ot a_{\langle0\rangle}-\sum a_{\langle0\rangle}\ot a_{\langle-1\rangle},\\
&\delta_E(x)=\sum x\bi\ot x\bii+\sum x_{\langle0\rangle}\ot x_{\langle1\rangle}-\sum x_{\langle1\rangle}\ot x_{\langle0\rangle}.
\end{aligned}
$$
Then  $A\lrcoprod H$ is a noncommutative Poisson  coalgebra which is called the \emph{bicrossed coproduct} associated to the matched pair of noncommutative Poisson coalgebras $A$ and $H$.
\end{lemma}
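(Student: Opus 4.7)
The plan is to verify the three axioms of a noncommutative Poisson coalgebra for $(E, \delta_E, \Delta_E)$: that $(E, \delta_E)$ is a Lie coalgebra, that $(E, \Delta_E)$ is a coassociative coalgebra, and that the compatibility identity
\[
(\id\ot\Delta_E)\delta_E(w)=(\delta_E\ot\id)\Delta_E(w)+(\tau\ot\id)(\id\ot\delta_E)\Delta_E(w)
\]
holds for every $w\in E$. The first two axioms are immediate from the hypotheses. Since $(A, H, \phi, \psi)$ is a matched pair of Lie coalgebras, the map $\delta_E=\delta_A+\phi-\tau\phi$ on $A$ together with $\delta_H+\psi-\tau\psi$ on $H$ is well known to make $E=A\oplus H$ a Lie coalgebra (the dual of the classical Majid matched pair construction, which I would cite). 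Likewise, since $(A, H, \rho, \gamma, \al, \beta)$ is a matched pair of coassociative coalgebras, $\Delta_E=\Delta_A+\rho+\gamma$ on $A$ and $\Delta_H+\al+\beta$ on $H$ defines a coassociative coalgebra structure on $E$.

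The substantive part is the compatibility identity, which I would check separately on elements $a\in A$ and $x\in H$. For $a\in A$, I would expand each of the three terms
\[
(\id\ot\Delta_E)(\delta_A(a)+\phi(a)-\tau\phi(a)),\quad
(\delta_E\ot\id)(\Delta_A(a)+\rho(a)+\gamma(a)),
\]
\[
(\tau\ot\id)(\id\ot\delta_E)(\Delta_A(a)+\rho(a)+\gamma(a)),
\]
in Sweedler notation and then decompose the resulting sums according to the ``type'' of each summand in $(A\oplus H)^{\ot 3}$, giving eight types $AAA$, $AAH$, $\ldots$, $HHH$. The all-$A$ type collapses to the compatibility condition for $A$, which holds because $(A,\delta_A,\Delta_A)$ is a noncommutative Poisson coalgebra; the all-$H$ type is empty, since no term produced from $a\in A$ can land in $H^{\ot 3}$. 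The six mixed types each yield an identity whose two sides match precisely when one of (CM1)--(CM3) holds, possibly combined with one of the matched-pair-of-coassociative-coalgebras axioms for $(\rho,\gamma,\al,\beta)$ or the matched-pair-of-Lie-coalgebras axioms for $(\phi,\psi)$, which are already part of the hypothesis. The case $x\in H$ is entirely analogous with the roles of $A$ and $H$ swapped: $HHH$ gives the compatibility on $H$, $AAA$ vanishes, and the six mixed types correspond to (CM4)--(CM6).

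The main obstacle is purely organisational: each side contains many summands, and the minus signs introduced by $-\tau\phi$ and $-\tau\psi$ together with the use of $\tau_{12}$ in (CM1)--(CM6) make it easy to mis-pair terms. To control this, I would tabulate the expansion on each side of the compatibility by type, so that every monomial appears in exactly one of the eight cells, and then read off, cell by cell, which of (CM1)--(CM6) (or which underlying matched-pair axiom) provides the required equality. Once this table is set up, verifying the lemma reduces to six routine matchings on each of $A$ and $H$, and I expect no genuine mathematical difficulty beyond this bookkeeping.
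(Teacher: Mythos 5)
Your proposal takes exactly the route the paper intends: the paper declares this proof ``omitted since it is by direct computations,'' and the identical computation is written out in full for the cocycle generalization in Lemma~\ref{lem2}, where both sides of the compatibility identity are expanded and matched summand by summand. Your decomposition of $(A\oplus H)^{\otimes 3}$ into eight type-cells, with the all-$A$ cell reducing to the compatibility axiom of $A$ and the all-$H$ cell vanishing for $a\in A$ (and symmetrically for $x\in H$), is the correct bookkeeping and is essentially what the paper does.

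One point in your accounting needs care before the table actually closes. For $a\in A$ there are six nonempty mixed cells, namely $AHA$, $HAA$, $AAH$, $HHA$, $HAH$, $AHH$, but (CM1)--(CM3) account only for the three cells containing a single $H$-factor ($AHA$, $HAA$, $AAH$ respectively). The three cells with two $H$-factors produce identities such as $(\id\ot\rho)\phi(a)=(\delta_H\ot\id)\rho(a)+\tau_{12}(\id\ot\phi)\rho(a)$ for the $HHA$ cell; these mix the Lie-coalgebra data $\delta_H,\phi$ with the coassociative datum $\rho$ and therefore follow from neither the matched pair of Lie coalgebras nor the matched pair of coassociative coalgebras. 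They are precisely the noncommutative Poisson bicomodule conditions \eqref{comodcoalg12}--\eqref{comodcoalg114} (and their analogues for $H$ over $A$); in the generalized Lemma~\ref{lem2} they appear as the six conditions (CCP3), (CCP5), (CCP6), (CCP9), (CCP11), (CCP12), which are additional to the six corresponding to (CM1)--(CM6). So when you fill in your table you will find three cells on each side that are not discharged by the hypotheses as you have listed them; you must either invoke the bicomodule conditions explicitly or record that the definition of a matched pair of noncommutative Poisson coalgebras is to be read as presupposing them. This does not change the strategy, only the list of hypotheses you cite cell by cell.
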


The proof of the above Lemma \ref{lem1} is omitted since it is by direct computations.
In the following of this section, we construct noncommutative Poisson bialgebra from the double cross biproduct of a pair of braided noncommutative Poisson bialgebras.
First we generalize the concept of noncommutative Poisson-Hopf bimodule to the case of $A$ is not necessarily a noncommutative Poisson bialgebra.
But by abuse of notation, we also call it noncommutative Poisson-Hopf module.

\begin{definition}
Let $A$ be simultaneously a noncommutative Poisson  algebra and a noncommutative Poisson  coalgebra.
 If $H$ is an $A$-bimodule, $A$-bicomodule and satisfying
\begin{enumerate}
\item[(HM1')]$\psi(x\leftarrow a)=(x_{\langle0\rangle}\leftarrow a)\ot x_{\langle1\rangle}+(x\leftarrow a\bi)\ot a\bii+x\qoo\ot[a, x\qoi]$,
\item[(HM2')]  $\tau\psi(x\leftarrow a)=x_{\langle1\rangle} a\ot x_{\langle0\rangle}+x\qi\ot(x\qoo\trl a)-a\li\ot(x\trl a\lii)$
\item[(HM3')]  $\psi(a\rightarrow x)=( a\rightarrow x_{\langle0\rangle})\ot x_{\langle1\rangle}+x\qoo\ot[a, x\qi]-(a\bi\rightarrow x)\ot a\bii$,
\item[(HM4')]    $\tau\psi(a\rightarrow x)=ax_{\langle1\rangle}\ot x_{\langle0\rangle}+x\qoi\ot(x\qoo\trl a)
-a\lii\ot(x\trl a\li)$,
\item[(HM5')] $\beta(x\trl a)=(x\trl a\li)\ot a\lii
-x_{\langle0\rangle}\ot ax_{\langle1\rangle}+(x_{\langle0\rangle}\leftarrow a)\ot x_{\langle1\rangle}$,
\item[(HM6')] $\beta(x\trl a)=(x\qoo\trl a)\ot x\qi-x\qoo\ot[a, x\qi]-(a\bi\rightarrow x)\ot a\bii$,
\item[(HM7')] $\al(x\trl a)=a\li\ot(x\trl a\lii)
    +x_{\langle1\rangle}\ot (a\rightarrow x_{\langle0\rangle})- x_{\langle1\rangle}a\ot x_{\langle0\rangle}$,
\item[(HM8')] $\al(x\trl a)= x\qoi\ot (x\qoo\trl a)-[a,x\qoi]\ot x\qoo
    +a\bi\ot(x\leftarrow a\bii)$,
\end{enumerate}
then $H$ is called a noncommutative Poisson-Hopf bimodule over $A$.
\end{definition}
We denote  the  category of noncommutative Poisson-Hopf bimodules over $A$ by ${}^{A}_{A}\mathcal{M}{}^{A}_{A}$.

\begin{definition}
Let $A$ be a noncommutative Poisson algebra and noncommutative Poisson  coalgebra and  $H$ is a noncommutative Poisson-Hopf bimodule over $A$. If $H$ is a noncommutative Poisson algebra and a noncommutative Poisson coalgebra in ${}^{A}_{A}\mathcal{M}^{A}_{A}$, then we call $H$ a \emph{braided noncommutative Poisson  bialgebra} over $A$, if the following conditions are satisfied:
\begin{enumerate}
\item[(BB1')]
$\delta_{H}(x y)=x\bi y\ot x\bii-(x_{\langle1\rangle}\rightarrow y)\ot x_{\langle0\rangle}+x y\bi\ot y\bii
-(x\leftarrow y_{\langle1\rangle})\ot y_{\langle0\rangle}\\
+y\li\ot[x, y\lii]+y\qoo\ot(x\trl y\qi)+x\lii\ot[y, x\li]+x\qoo\ot(y\trl x\qoi),$
\item[(BB2')]
$\Delta_{H}([x, y])=[x,y\li]\ot y\lii+(x\trl y\qoi)\ot y\qoo+y\li\ot[x, y\lii]+y\qoo\ot(x\trl y\qi)\\
+x\bi y\ot x\bii-(x_{\langle1\rangle}\rightarrow y)\ot x_{\langle0\rangle}-x\bi\ot yx\bii-x_{\langle0\rangle}\ot(y\leftarrow x_{\langle1\rangle}).$
\end{enumerate}
\end{definition}

\begin{definition}\label{def:dmp}
Let $A, H$ be both noncommutative Poisson  algebras and noncommutative Poisson  coalgebras. If  the following conditions hold:
\begin{enumerate}
\item[(DM1)] $\phi(a b)= (a_{\langle-1\rangle}\leftarrow b)\ot  a_{\langle0\rangle}+( a\rightarrow b_{\langle-1\rangle})\ot  b_{\langle0\rangle}+b\loi\ot[a, b\loo]+a\lmi\ot[b, a\loo]$,
\item[(DM2)] $\tau\phi(ab)= a_{\langle0\rangle}b\ot  a_{\langle-1\rangle}+a b_{\langle0\rangle}\ot  b_{\langle-1\rangle}+b\loo\ot(b\lmi\trl a)+a\loo\ot(a\loi\trl b)$,
\item[(DM3)] $\psi(xy)= x_{\langle0\rangle} y\ot x_{\langle1\rangle}+x  y_{\langle0\rangle}\ot  y_{\langle1\rangle}
+ y\qoo\ot(x\trr y\qi)+x\qoo\ot(y\trr x\qoi)$,
\item[(DM4)] $\tau\psi(xy)=(x_{\langle1\rangle}\ppl y)\ot x_{\langle0\rangle}+(x\ppr y_{\langle1\rangle})\ot y_{\langle0\rangle}-y\qoi\ot[x, y\qoo]-x\qi\ot[y, x\qoo]$,
\item[(DM5)]  $\delta_A(x\ppr b)=( x_{\langle0\rangle}\ppr b)\ot  x_{\langle1\rangle}+(x\ppr b\bi)\ot b\bii-x\qi\ot(x\qoo\trr b)+b\li\ot(x\trr b\lii)$,
\item[(DM6)]  $\delta_A(a\ppl y)=(a\ppl y_{\langle0\rangle})\ot y_{\langle1\rangle}
+(a\bi\ppl y)\ot a\bii-y\qoi\ot(y\qoo\trr a)+a\lii\ot (y\trr a\li)$,
\item[(DM7)] $\delta_H(x\leftarrow b)=(x\bi\leftarrow b)\ot x\bii-(x\leftarrow b_{\langle0\rangle})\ot b_{\langle-1\rangle}+b\loi\ot(x\trl b\loo)-x\lii\ot(x\li\trl b)$,
\item[(DM8)] $\delta_H(a\rightarrow y)=(a\rightarrow y\bi)\ot y\bii-(a_{\langle0\rangle}\rightarrow y)\ot a_{\langle-1\rangle}+a\lmi\ot(y\trl a\loo)-y\li\ot(y\lii\trl a)$,
\item[(DM9)] $\phi(x\ppr b)+\psi(x\leftarrow b)=(x_{\langle0\rangle}\leftarrow b)\ot x_{\langle1\rangle}+(x\leftarrow b\bi)\ot b\bii
+x b_{\langle-1\rangle}\ot b_{\langle0\rangle}\\
+b\loi\ot(x\trr b\loo)-x\lii\ot(x\li\trr b)+x\qoo\ot[b, x\qoi]$,
\item[(DM10)] $\tau\phi(x\ppr b)+\tau\psi(x\leftarrow b)=x_{\langle1\rangle} b\ot x_{\langle0\rangle}+(x\ppr b_{\langle0\rangle})\ot b_{\langle-1\rangle}+x\qi\ot(x\qoo\trl b)\\
-(x\bi\ppr b)\ot x\bii-b\loo\ot[x, b\lmi]-b\li\ot(x\trl b\lii)$,
\item[(DM11)] $\phi(a\ppl y)+\psi(a\rightarrow y)=a_{\langle-1\rangle}y\ot a_{\langle0\rangle}+( a\rightarrow y_{\langle0\rangle})\ot y_{\langle1\rangle}+y\qoo\ot[a, y\qi]\\
-(a\bi\rightarrow y)\ot a\bii+a\lmi\ot(y\trr a\loo)-y\li\ot(y\lii\trr a)$,
\item[(DM12)] $\tau\phi(a\ppl y)+\tau\psi(a\rightarrow y)=ay_{\langle1\rangle}\ot y_{\langle0\rangle}+( a_{\langle0\rangle}\ppl y)\ot a_{\langle-1\rangle}+y\qoi\ot(y\qoo\trl a)\\
-(a\ppl y\bi)\ot y\bii-a\loo\ot[y, a\loi]-a\lii\ot(x\trl a\li)$,
\item[(DM13)] $\rho([a, b])= (a_{\langle-1\rangle}\leftarrow b)\ot  a_{\langle0\rangle}-( b_{(-1)}\trl a)\ot  b_{(0)}
+b\loi\ot[a, b\loo]-a_{\langle-1\rangle}\ot ba_{\langle0\rangle}$,
\item[(DM14)] $\beta([x, y])= [x, y\qoo]\ot y\qi+y\qoo\ot (x\trr y\qi)
-x_{\langle0\rangle}\ot(y\ppr x_{\langle1\rangle})+x_{\langle0\rangle}y\ot x_{\langle1\rangle}$,
\item[(DM15)] $\gamma([a ,b])= a_{\langle0\rangle}\ot(b\rightarrow a_{\langle-1\rangle})- b_{(0)}\ot( b_{(1)}\trl a)
+[a,b\loo]\ot b\lmi +(a_{\langle-1\rangle}\leftarrow b)\ot a_{\langle0\rangle}$,
\item[(DM16)] $\al([x,y])= y\qoi\ot[x,y\qoo]+ (x\trr y\qoi)\ot y\qoo
-(x_{\langle1\rangle}\ppl y)\ot x_{\langle0\rangle}+x_{\langle1\rangle}\ot yx_{\langle0\rangle}$,
\item[(DM17)]  $\Delta_A(x\trr b)=(x\trr b\li)\ot b\lii+b\li\ot(x\trr b\lii)+( x_{\langle0\rangle}\ppr b)\ot  x_{\langle1\rangle}+x_{\langle1\rangle}\ot (b\ppl x_{\langle0\rangle})$,
\item[(DM18)]  $\Delta_A(y\trr a)=-(a\bi\ppl y)\ot a\bii+a\bi\ot(y\ppr a\bii)+( y\qoo\trr a)\ot  y\qi+y\qoi\ot (y\qoo\trr a)$,
\item[(DM19)] $\Delta_H(x\trl b)=(x\trl b\loo)\ot b\lmi+b\loi\ot(x\trl b\loo)+(x\bi\leftarrow b)\ot x\bii
    -x\bi\ot(b\rightarrow x\bii)$,
\item[(DM20)] $\Delta_H(y\trl a)=(y\li\trl a)\ot y\lii+y\li\ot(y\lii\trl a)+(a_{\langle0\rangle}\rightarrow y)\ot a_{\langle-1\rangle}
    +a_{\langle-1\rangle}\ot(y\leftarrow a_{\langle0\rangle})$,
\item[(DM21)] $\rho(x\trr b)+\beta(x\trl b)=(x\trl b\li)\ot b\lii+[x, b\loi]\ot b\loo+b\loi\ot(x\trr b\loo)
-x_{\langle0\rangle}\ot bx_{\langle1\rangle}\\
    +(x_{\langle0\rangle}\leftarrow b)\ot x_{\langle1\rangle}-x\bi\ot(b\ppl x\bii)$,
\item[(DM22)] $\rho(y\trr a)+\beta(y\trl a)=(y\qoo\trl a)\ot y\qi-y\qoo\ot[a, y\qi]-(a\bi\rightarrow y)\ot a\bii\\
    -a_{\langle-1\rangle}y\ot a_{\langle0\rangle}+y\li\ot(y\lii\trr a)
+a_{\langle-1\rangle}\ot (y\ppr a_{\langle0\rangle})$,
\item[(DM23)] $\gamma(x\trr b)+\al(x\trl b)=b\li\ot(x\trl b\lii)+ b\loo\ot[x,b\lmi]+(x\trr b\loo)\ot b\lmi\\
    +x_{\langle1\rangle}\ot (b\rightarrow x_{\langle0\rangle})+(x\bi\ppr b)\ot x\bii
- x_{\langle1\rangle}b\ot x_{\langle0\rangle}$,
\item[(DM24)] $\gamma(y\trr a)+\al(y\trl a)= y\qoi\ot (y\qoo\trl a)-[a,y\qoi]\ot y\qoo+(y\li\trr a)\ot y\lii-a_{\langle0\rangle}\ot ya_{\langle-1\rangle}\\
    +a\bi\ot(y\leftarrow a\bii)
+(a_{\langle0\rangle}\ppl y)\ot a_{\langle-1\rangle}$,
\end{enumerate}
\noindent then $(A, H)$ is called a \emph{double matched pair}.
\end{definition}

\begin{theorem}\label{main1}Let $(A, H)$ be matched pair of noncommutative Poisson  algebras and noncommutative Poisson  coalgebras,
$A$ is  a  braided noncommutative Poisson bialgebra in
${}^{H}_{H}\mathcal{M}{}^{H}_{H}$, $H$ is  a braided noncommutative Poisson bialgebra in
${}^{A}_{A}\mathcal{M}{}^{A}_{A}$. If we define the double cross biproduct of $A$ and
$H$, denoted by $A\lrbiprod H$, $A\lrbiprod H=A\bowtie H$ as
noncommutative Poisson algebra, $A\lrbiprod H=A\lrcoprod H$ as noncommutative  Poisson coalgebra, then
$A\lrbiprod H$ become a noncommutative Poisson bialgebra if and only if  $(A, H)$ form a double matched pair.
\end{theorem}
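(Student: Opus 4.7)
The plan is to emulate the bosonisation blueprint of Theorem~3.3, but in the symmetric setting where both $A$ and $H$ carry non-trivial actions and coactions on the other. By the bicrossed product lemma, $A \bowtie H$ is already a noncommutative Poisson algebra, and by Lemma~\ref{lem1}, $A\lrcoprod H$ is already a noncommutative Poisson coalgebra. Thus it suffices to verify the two noncommutative Poisson bialgebra compatibility identities (the one expressing $\delta_E$ of a product, and the one expressing $\Delta_E$ of a bracket) on a generic pair $((a,x),(b,y)) \in E\times E$, where $E=A\oplus H$. I would treat these two identities separately.

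For the first compatibility, the plan is to expand
\[
(a,x)\cdot_{E}(b,y) = \bigl(ab + x\ppr b + a\ppl y,\; xy + x\leftarrow b + a\rightarrow y\bigr),
\]
apply $\delta_A + \phi - \tau\phi$ on its $A$-component and $\delta_H + \psi - \tau\psi$ on its $H$-component, and expand the right-hand side of the compatibility identity analogously using $\Delta_E = \Delta_A + \rho + \gamma$ (respectively $\Delta_H + \alpha + \beta$) together with the two-component form of the bracket in $E$. Both sides then decompose as finite sums of elementary tensors belonging to one of the four summands $A\ot A$, $A\ot H$, $H\ot A$ and $H\ot H$ of $E\ot E$. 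Matching them according to tensor type yields exactly the braided compatibility (BB1) on $A$, its analogue (BB1') on $H$, and the twelve mixed identities (DM1)--(DM12).

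For the second compatibility, the analogous bookkeeping applied to
\[
[(a,x),(b,y)]_{E} = \bigl([a,b] + x\trr b - y\trr a,\; [x,y] + x\trl b - y\trl a\bigr)
\]
produces (BB2), (BB2'), and the remaining mixed identities (DM13)--(DM24). The converse direction is automatic: given that $A \lrbiprod H$ is a noncommutative Poisson bialgebra, evaluating the two compatibility identities on inputs of pure type and projecting onto the four summands of $E\ot E$ recovers each of the listed conditions.

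The main obstacle is organizational rather than conceptual: each identity, once fully expanded, produces on the order of several dozen elementary tensor terms on each side, which must be sorted and cancelled systematically. The structural observation that keeps the calculation tractable is that when an action such as $x\ppr b$ or $x\trr b$ is hit by the Lie cobracket or comultiplication of $E$, the output splits into pieces of pure tensor type (which are absorbed by the Hopf-bimodule conditions (HM1)--(HM8) and (HM1')--(HM8') established earlier) together with genuinely mixed residuals; the residuals not already controlled by the matched pair or braided bialgebra axioms on $A$ and $H$ individually are precisely those packaged into (DM1)--(DM24). Performing the match thus reduces to careful, type-sorted term-by-term accounting, with the Hopf-bimodule and matched pair conditions erasing the ``unmixed'' remainders and leaving the (DM)-identities as the exact residual content.
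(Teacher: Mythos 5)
Your proposal is correct and follows essentially the same expand-sort-and-match computation that the paper uses: the paper in fact omits a direct proof of this theorem and obtains it as the special case of Theorem \ref{main2} in which all cocycles $\sigma,\omega,\theta,\nu$ and cycles $p,q,s,t$ vanish, and the proof of Theorem \ref{main2} is precisely your term-by-term expansion of $\delta_E$ of a product and $\Delta_E$ of a bracket, sorted by tensor type, with the pure-type residues absorbed by the braided/Hopf-bimodule hypotheses and the mixed residues giving (DM1)--(DM24). Your direct verification is therefore a valid (and equivalent) route; the only difference is that you carry out the specialization by hand rather than quoting the more general cocycle theorem.
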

The proof of the above Theorem \ref{main1} is omitted since it is a special case of Theorem \ref{main2} in next subsection. %\ref{subsetion-4.2}.

\subsection{Cocycle bicrossproduct noncommutative Poisson bialgebras}

In this section, we construct cocycle bicrossproduct noncommutative Poisson bialgebras, which is a generalization of double cross biproduct.

Let $A, H$ be both noncommutative Poisson  algebras and noncommutative Poisson  coalgebras. For $a, b\in A$, $x, y\in H$,  we denote linear maps
\begin{align*}
&\sigma: H\otimes H \to A, \quad \theta: A\otimes A \to H,\\
&\omega: H\otimes H \to A, \quad \nu: A\otimes A \to H,\\
&p: A  \to H\otimes H, \quad  q: H \to A\otimes A,\\
&s: A  \to H\otimes H, \quad  t: H \to A\otimes A,
\end{align*}
by
\begin{eqnarray*}
&& \sigma (x, y)  \in  A, \quad \theta(a, b) \in H,\\
&& \omega (x, y)  \in  A, \quad \nu(a, b) \in H,\\
&& p(a)=\sum a_{1p}\ot  a_{2p}, \quad q(x) = \sum x_{1q} \ot x_{2q},\\
&& s(a)=\sum a_{1s}\ot  a_{2s}, \quad t(x) = \sum x_{1t} \ot x_{2t}.
\end{eqnarray*}
A pair of bilinear maps $\si, \omega: H\ot H\to A$ are called cocycles on $H$ if
\begin{enumerate}
\item[(CC1)] $x\trr\omega(y, z)+\sigma(x, yz)=\sigma(x,y)\ppl z+\omega([x, y], z)+y\ppr\sigma(x, z)+\omega(y, [x, z]).$
\end{enumerate}
A pair of bilinear maps $\theta, \nu: A\ot A\to H$ are called cocycles on $A$ if
\begin{enumerate}
\item[(CC2)] $\theta(a, bc)-\nu(b, c)\trl a=\theta(a, b)\leftarrow c+\nu([a, b], c)+b\rightarrow \theta(a, c)+\nu(b, [a, c]).$
\end{enumerate}
A pair of bilinear maps $p, s: A\to H\ot H$ are called cycles on $A$ if
\begin{enumerate}
\item[(CC3)]  $a_{\langle-1\rangle}\ot s(a_{\langle0\rangle})+a_{1p}\ot\Delta_H(a_{2p})=p(a\loo)\ot a\lmi
+\delta_H(a_{1s})\ot a_{2s}\\
+\tau_{12}(a\loi\ot p(a\loo))+\tau_{12}(a_{1s}\ot\delta_H(a_{2s}))$.
\end{enumerate}

A pair of bilinear maps $q, t: H\to A\ot A$ are called cycles on $H$ if
\begin{enumerate}
\item[(CC4)]  $ x_{1q}\ot \delta_A(x_{2q})-x_{\langle1\rangle}\ot t(x_{\langle0\rangle})=q(x\qoo)\ot x\qi+\delta_A(x_{1t})\ot x_{2t}\\
    +\tau_{12}(x\qoi\ot q(x\qoo))+\tau_{12}(x_{1t}\ot\delta_A(x_{2t}))$.
\end{enumerate}

In the following definitions, we introduced the concept of cocycle
noncommutative Poisson  algebras and cycle noncommutative Poisson  coalgebras, which are  in fact not really
ordinary noncommutative Poisson algebras and noncommutative Poisson  coalgebras, but generalized ones.

\begin{definition}
(i): Let $\si, \omega$ be cocycles on a vector space  $H$ equipped with multiplications $[,], \cdot : H \ot H \to H$, satisfying the
following cocycle associative identity:
\begin{enumerate}
\item[(CC5)] $[x, yz]+x\trl\omega(y, z)=[x, y]z+\sigma(x, y)\rightarrow z+y[x, z]+y\leftarrow\sigma(x, z)$.
\end{enumerate}
Then  $H$ is called a  cocycle $(\si, \omega)$-noncommutative Poisson algebra which is denoted by $(H, \sigma, \omega)$.

(ii): Let $\theta, \nu$ be cocycle on a vector space $A$  equipped with multiplications $[,], \cdot : A \ot A \to A$, satisfying the
following cocycle associative identity:
\begin{enumerate}
\item[(CC6)] $[a, bc]-\nu(b, c)\trr a=[a, b]c+\theta(a, b)\ppr c+b[a, c]+b\ppl \theta(a, c)$.
\end{enumerate}
Then  $A$ is called a cocycle $(\theta, \nu)$-noncommutative Poisson   algebra which is denoted by $(A, \theta, \nu)$.

(iii) Let $p, s$ be cycles on a vector space  $H$ equipped with  comultiplications $\Delta, \delta: H \to H \ot H$, satisfying the
following cycle coassociative identity:
\begin{enumerate}
\item[(CC7)] $x\bi\ot\Delta_H(x\bii)+x_{\langle0\rangle}\ot s(x_{\langle1\rangle})=\delta_H(x\li)\ot x\lii
+p(x\qoi)\ot x\qoo\\
+\tau_{12}(x\li\ot\delta_H(x\lii))+\tau_{12}(x\qoo\ot p(x\qi))$.
\end{enumerate}
\noindent Then  $H$ is called a  cycle $(p, s)$-noncommutative Poisson coalgebra which is denoted by $(H, p, s)$.

(iv) Let $q, t$ be cycles on a vector space  $A$ equipped with comultiplications $\Delta, \delta: A \to A \ot A$, satisfying the
following cycle coassociative identity:
\begin{enumerate}
\item[(CC8)] $a\bi\ot \Delta_A(a\bii)-a_{\langle0\rangle}\ot t(a_{\langle-1\rangle})=\delta_A(a\li)\ot a\lii+q(a\loi)\ot a\loo\\
    +\tau_{12}\ot(a\li\ot\delta_A(a\lii))+\tau_{12}(a\loo\ot q(a\lmi))$.
\end{enumerate}
\noindent Then  $A$ is called a  cycle $(q, t)$-noncommutative Poisson coalgebra which is denoted by $(A, q, t)$.
\end{definition}

\begin{definition}
A  \emph{cocycle cross product system } is a pair of  $(\theta, \nu)$-noncommutative Poisson algebra $A$ and $(\sigma, \omega)$-noncommutative Poisson algebra $H$,
where $\si, \omega: H\ot H\to A$ are cocycles on $H$, $\theta, \nu: A\ot A\to H$ are cocycles on $A$ and the following conditions are satisfied:
\begin{enumerate}
\item[(CP1)] $[a, x\ppr b]-(x\leftarrow b)\trr a=x\ppr[a, b]+\omega(x, \theta(a,b))-(x\trr a)b-(x\trl a)\ppr b$,
\item[(CP2)] $[a, b\ppl x]-(b\rightarrow x)\trr a=[a, b]\ppl x+\omega(\theta(a,b), x)-b(x\trr a)-b\ppl (x\trl a)$,
\item[(CP3)] $(xy)\trr a-[a, \omega(x, y)]=(x\trr a)\ppl y+\omega(x\trl a, y)+x\ppr(y\trr a)+\omega(x, y\trl a)$,
\item[(CP4)] $x\trr(ab)+\sigma(x, \nu(a, b))=(x\trr a)b+(x\trl a)\ppr b+a(x\trr b)+a\ppl (x\trl b)$,
\item[(CP5)] $x\trr(y\ppr a)+\sigma(x, y\leftarrow a)=\sigma(x, y)a+[x, y]\ppr a+y\ppr(x\trr a)+\omega(y, x\trl a) $,
\item[(CP6)] $x\trr(a\ppl y)+\sigma(x, a\rightarrow y)=a\sigma(x, y)+a\ppl [x, y]+(x\trr a)\ppl y+\omega(x\trl a, y) $,
\item[(CP7)] $[x, y\leftarrow a]+x\trl(y\ppr a)=[x, y]\leftarrow a+\nu(\sigma(x, y), a)+y(x\trl a)+y\leftarrow(x\trr a)$,
\item[(CP8)] $[x, a\rightarrow y]+x\trl(a\ppl y)=a\rightarrow [x, y]+\nu(a, \sigma(x, y))+(x\trl a)y+(x\trr a)\rightarrow y$,
\item[(CP9)] $[x, \nu(a, b)]+x\trl(ab)=(x\trl a)\leftarrow b+\nu(x\trr a, b)+a\rightarrow (x\trl b)+\nu(a, x\trr b)$,
\item[(CP10)] $(xy)\trl a-\theta(a, \omega(x, y))=(x\trl a)y+(x\trr a)\rightarrow y+x(y\trl a)+x\leftarrow (y\trr a)$,
\item[(CP11)] $\theta(a, x\ppr b)-(x\leftarrow b)\trl a=x\theta(a, b)+x\leftarrow [a, b]-(x\trl a)\leftarrow b-\nu( x\trr a, b)$,
\item[(CP12)] $\theta(a, b\ppl x)-(b\rightarrow x)\trl a=\theta(a, b)x+[a, b]\rightarrow x-b\rightarrow (x\trl a)-\nu(b, x\trr a)$.
\end{enumerate}
\end{definition}

\begin{lemma}
Let $(A, H)$ be  a  cocycle cross product system.
If we define $E=A_{\sigma, \omega}\#_{\theta, \nu} H$ as the vector space $A\oplus H$ with the   multiplication
\begin{align}
[(a, x), (b, y)]_E=\big([a, b]+x\trr b-y\trr a+\sigma(x, y), \, [x, y]+x\trl b-y\trl a+\theta(a, b)\big),
\end{align}
and
\begin{align}
(a, x)\cdot_E (b, y)=\big(ab+x\ppr b+a\ppl y+\omega(x, y), \, xy+x\leftarrow b+a\rightarrow y+\nu(a, b)\big).
\end{align}
Then $E=A_{\sigma, \omega}\#_{\theta, \nu} H$ forms a noncommutative Poisson algebra  which is called the cocycle cross product noncommutative Poisson algebra.
\end{lemma}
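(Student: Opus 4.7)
The plan is to verify the three defining axioms of a noncommutative Poisson algebra for the pair $(E, [,]_E, \cdot_E)$: antisymmetry and the Jacobi identity of $[,]_E$, associativity of $\cdot_E$, and the Leibniz rule $[u, v\cdot_E w]_E = [u,v]_E \cdot_E w + v \cdot_E [u,w]_E$. Because both operations are bilinear and $E = A\oplus H$, it suffices to check each axiom on all triples whose entries are pure, i.e.\ of the form $(a,0)$ or $(0,x)$ with $a\in A$ and $x\in H$. This yields $2^3 = 8$ sub-cases per axiom; several are symmetric or trivial, and the remainder decompose according to which combinations of $[,]_A, [,]_H, \cdot_A, \cdot_H$, the actions $\trr, \trl, \ppr, \ppl, \rightarrow, \leftarrow$, and the cocycles $\sigma, \omega, \theta, \nu$ actually contribute to each side.

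First I would handle the Lie and associative structure. Antisymmetry of $[,]_E$ is immediate from antisymmetry of $[,]_A$, $[,]_H$ and the implicit skew-symmetry in the definition. The Jacobi identity, after the grading decomposition, reduces in the pure $A$ and pure $H$ cases to the Jacobi identities of those Lie algebras, and in the mixed cases to the Lie-matched-pair axioms for $(A,H,\trr,\trl)$ together with the subset of the (CP) conditions that govern how $\sigma$ and $\theta$ interact with $\trr, \trl, [,]_A, [,]_H$. Associativity of $\cdot_E$ is analogous: it reduces to associativity of $A$ and $H$, the associative matched pair axioms for $\ppr, \ppl, \rightarrow, \leftarrow$, and the remaining (CP) conditions ensuring that $\omega$ and $\nu$ play the role of genuine $2$-cocycles with respect to all six actions.

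The most delicate step is the Leibniz rule. Expanding $[(a,x), (b,y)\cdot_E (c,z)]_E$ and its right-hand counterpart, and then projecting onto the $A$- and $H$-components, yields two equations per sub-case, each of which decomposes further by structure map. The pure $A$ and pure $H$ cases are absorbed precisely by the cocycle Leibniz identities (CC6) and (CC5); the mixed cases whose outer bracket involves a cocycle correction are handled by (CC1) and (CC2); and the remaining mixed cases in which a cross-action meets a multiplication or a bracket on the opposite side are covered exactly by (CP1)--(CP12), one axiom per interaction pattern. The main obstacle is not conceptual but organizational: the full Leibniz verification spreads over many dozens of summands, and one must carefully align each term on both sides with the correct axiom from the cocycle cross product data. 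The list of axioms has been engineered precisely so that this alignment is exact, so once the case analysis is set up the verification, though long, is mechanical.
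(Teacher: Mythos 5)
Your proposal is correct and takes essentially the same route as the paper: the paper treats the Lie and associative axioms on $E$ as immediate and verifies only the Leibniz identity by expanding both sides of $[(a,x),(b,y)\cdot_E(c,z)]_E$ in full and matching terms, with the pure cases absorbed by the cocycle identities (CC1), (CC2), (CC5), (CC6) and the mixed cases giving exactly (CP1)--(CP12). Your decomposition into pure $A$- and $H$-entries is just the bilinear case split that the paper carries out implicitly in one large computation.
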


\begin{proof} First, it is obvious that $(E, ~[ , ])$  and  $(E, ~\cdot)$  are respectively a Lie algebra and an associative algebra. Then, we need to prove the multiplications  $\cdot$ and $[,]$    satisfying
$[(a, x), (b, y)\cdot_{E} (c, z)]_{E}=[(a, x), (b, y)]_{E}\cdot_{E} (c, z)+(b, y)\cdot_{E}[(a, x), (c, z)]_{E}$. By direct computations, the left hand side is equal to
\begin{eqnarray*}
&&[(a, x), (b, y)\cdot_{E} (c, z)]_{E}\\
&=&[(a, x),(b c+y\ppr c+b\ppl z+\omega(y, z), y z+y\leftarrow c+b\rightarrow z+\nu(b, c))]_{E}\\
&=&\big([a, bc]+[a, y\ppr c]+[a, b\ppl z]+[a, \omega(y, z)]+x\trr (bc)+x\trr(y\ppr c)\\
&&+x\trr(b\ppl z)+x\trr\omega(y, z)-(yz)\trr a-(y\leftarrow c)\trr a-(b\rightarrow z)\trr a\\
&&-\nu(b, c)\trr a+\sigma(x, yz)+\sigma(x, y\leftarrow c)+\sigma(x, b\rightarrow z)+\sigma(x, \nu(b, c)),\\
&&[x, yz]+[x, y\leftarrow c]+[x,b\rightarrow z]+[x,\nu(b, c)]+x\trl(bc)+x\trl(y\ppr c)\\
&&+x\trl(b\ppl z)+x\trl\omega(y, z)-(yz)\trl a-(y\leftarrow c)\trl a-(b\rightarrow z)\trl a\\
&&-\nu(b, c)\trl a+\theta(a, bc)+\theta(a, y\ppr c)+\theta(a, b\ppl z)+\theta(a, \omega(y, z))\big),
\end{eqnarray*}
and the right hand side is equal to
\begin{eqnarray*}
&&[(a, x), (b, y)]_{E}\cdot_{E} (c, z)+(b, y)\cdot_{E}[(a, x), (c, z)]_{E}\\
&=&([a, b]+x\trr b-y\trr a+\sigma(x, y), [x, y]+x\trl b-y\trl a+\theta(a, b))\cdot_{E}(c, z)\\
&&+(b, y)\cdot_{E}([a,c]+x\trr c-z\trr a+\sigma(x, z), [x, z]+x\trl c-z\trl a+\theta(a, c))\\
&=&\big([a, b]c+(x\trr b)c-(y\trr a)c+\sigma(x, y)c+[x, y]\ppr c+(x\trl b)\ppr c-(y\trl a)\ppr c\\
&&+\theta(a, b)\ppr c+[a, b]\ppl z+(x\trr b)\ppl z-(y\trr a)\ppl z+\sigma(x, y)\ppl z\\
&&+\omega([x, y], z)+\omega(x\trl b, z)-\omega(y\trl a, z)+\omega(\theta(a, b), z), [x, y]z+(x\trl b)z\\
&&-(y\trl a)z+\theta(a, b)z+[x, y]\leftarrow c+(x\trl b)\leftarrow c-(y\trl a)\leftarrow c+\theta(a, b)\leftarrow c\\
&&+[a, b]\rightarrow z+(x\trr b)\rightarrow z-(y\trr a)\rightarrow z+\sigma(x, y)\rightarrow z+\nu([a, b], c)+\nu(x\trr b, c)\\
&&-\nu(y\trr a, c)+\nu(\sigma(x, y), c)\big)+\big(b[a, c]+b(x\trr c)-b(z\trr a)+b\sigma(x, z)\\
&&+y\ppr[a, c]+y\ppr(x\trr c)-y\ppr(z\trr a)+y\ppr\sigma(x, z)+b\ppl [x, z] +b\ppl (x\trl c)\\
&&-b\ppl (z\trl a)+b\ppl \theta(a, c)+\omega(y, [x, z])+\omega(y, x\trl c)-\omega(y, z\trl a)+\omega(y, \theta(a, c)),\\
&&y[x, z]+y(x\trl c)-y(z\trl a)+y\theta(a, c)+y\leftarrow[a, c]+y\leftarrow(x\trr c)-y\leftarrow(z\trr a)\\
&&+y\leftarrow\sigma(x, z)+b\rightarrow[x, z]+b\rightarrow (x\trl c)-b\rightarrow (z\trl a)+b\rightarrow \theta(a, c)+\nu(b, [a, c])\\
&&+\nu(b, x\trr c)-\nu(b, z\trr a)+\nu(b, \sigma(x, z))\big)\\
&=&\big([a, b]c+(x\trr b)c-(y\trr a)c+\sigma(x, y)c+[x, y]\ppr c+(x\trl b)\ppr c-(y\trl a)\ppr c\\
&&+\theta(a, b)\ppr c+[a, b]\ppl z+(x\trr b)\ppl z-(y\trr a)\ppl z+\sigma(x, y)\ppl z+\omega([x, y], z)\\
&&+\omega(x\trl b, z)-\omega(y\trl a, z)+\omega(\theta(a, b), z)+b[a, c]+b(x\trr c)-b(z\trr a)+b\sigma(x, z)\\
&&+y\ppr[a, c]+y\ppr(x\trr c)-y\ppr(z\trr a)+y\ppr\sigma(x, z)+b\ppl [x, z] +b\ppl (x\trl c)\\
&&-b\ppl (z\trl a)+b\ppl \theta(a, c)+\omega(y, [x, z])+\omega(y, x\trl c)-\omega(y, z\trl a)+\omega(y, \theta(a, c)),\\
&&[x, y]z+(x\trl b)z-(y\trl a)z+\theta(a, b)z+[x, y]\leftarrow c+(x\trl b)\leftarrow c-(y\trl a)\leftarrow c\\
&&+\theta(a, b)\leftarrow c+[a, b]\rightarrow z+(x\trr b)\rightarrow z-(y\trr a)\rightarrow z+\sigma(x, y)\rightarrow z+\nu([a, b], c)\\
&&+\nu(x\trr b, c)-\nu(y\trr a, c)+\nu(\sigma(x, y), c)+y[x, z]+y(x\trl c)-y(z\trl a)+y\theta(a, c)\\
&&+y\leftarrow[a, c]+y\leftarrow(x\trr c)-y\leftarrow(z\trr a)+y\leftarrow\sigma(x, z)+b\rightarrow[x, z]+b\rightarrow (x\trl c)\\
&&-b\rightarrow (z\trl a)+b\rightarrow \theta(a, c)+\nu(b, [a, c])+\nu(b, x\trr c)-\nu(b, z\trr a)+\nu(b, \sigma(x, z))\big).
\end{eqnarray*}
 Thus the two sides are equal to each other if and only if (CP1)--(CP12) hold.
\end{proof}

\begin{definition}
A  \emph{cycle cross coproduct system } is a pair of   $(p, s)$-noncommutative Poisson coalgebra $A$ and  $(q, t)$-noncommutative Poisson coalgebra $H$, where $p, s: A\to H\ot H$ are cycles on $A$,  $q, t: H\to A\ot A$ are cycles over $H$ such that following conditions are satisfied:
\begin{enumerate}
\item[(CCP1)] $a\bi\ot\rho(a\bii)-a_{\langle0\rangle}\ot\beta(a_{\langle-1\rangle})=-\tau\phi(a\li)\ot a\lii-\tau\psi(a\loi)\ot a\loo\\
    +\tau_{12}(a\loi\ot\delta_A(a\loo))+\tau_{12}(a_{1s}\ot q(a_{2s}))$,
\item[(CCP2)] $a\bi\ot\gamma(a\bii)-a_{\langle0\rangle}\ot\al(a_{\langle-1\rangle})
=\delta_A(a\loo)\ot a\lmi+q(a_{1s})\ot a_{2s}\\
-\tau_{12}(a\li\ot\tau\phi(a\lii))-\tau_{12}(a\loo\ot\tau\psi(a\lmi))$,
\item[(CCP3)] $a_{\langle0\rangle}\ot\Delta_H(a_{\langle-1\rangle})-a\bi\ot s(a\bii)
=\tau\phi(a\loo)\ot a\lmi+\tau\psi(a_{1s})\ot a_{2s}\\
+\tau_{12}(a\loi\ot\tau\phi(a\loo))+\tau_{12}(a_{1s}\ot\tau\psi(a_{2s}))$,
\item[(CCP4)] $a_{\langle-1\rangle}\ot\Delta_A(a_{\langle0\rangle})+a_{1p}\ot t(a_{2p})
=\phi(a\li)\ot a\lii+\psi(a\loi)\ot a\loo\\
+\tau_{12}(a\li\ot\phi(a\lii))+\tau_{12}(a\loo\ot\psi(a\lmi))$,
\item[(CCP5)] $a_{\langle-1\rangle}\ot\rho(a_{\langle0\rangle})+a_{1p}\ot \beta(a_{2p})
=\delta_H(a\loi)\ot a\loo+p(a\li)\ot a\lii\\
+\tau_{12}(a\loi\ot\phi(a\loo))+\tau_{12}(a_{1s}\ot \psi(a_{2s}))$,
\item[(CCP6)] $a_{\langle-1\rangle}\ot\gamma(a_{\langle0\rangle})+a_{1p}\ot\al (a_{2p})
=\phi(a\loo)\ot a\lmi+\psi(a_{1s})\ot a_{2s}\\
+\tau_{12}(a\li\ot p(a\lii))+\tau_{12}(a\loo\ot \delta_H(a\lmi))$,
\item[(CCP7)] $x\bi\ot \beta(x\bii)+x_{\langle0\rangle}\ot\rho(x_{\langle1\rangle})
=\delta_H(x\qoo)\ot x\qi+p(x_{1t})\ot x_{2t}\\
+\tau_{12}(x\li\ot \psi(x\lii))+\tau_{12}(x\qoo\ot \phi(x\qi))$,
\item[(CCP8)] $x\bi\ot\al(x\bii)+x_{\langle0\rangle}\ot\gamma(x_{\langle1\rangle})
=\psi(x\li)\ot x\lii+\phi(x\qoi)\ot x\qoo\\
+\tau_{12}(x\qoi\ot\delta_H(x\qoo))+\tau_{12}(x_{1t}\ot p(x_{2t}))$,
\item[(CCP9)] $x\bi\ot t(x\bii)+x_{\langle0\rangle}\ot\Delta_A(x_{\langle1\rangle})
=\psi(x\qoo)\ot x\qi+\phi(x_{1t})\ot x_{2t}\\
+\tau_{12}(x\qoi\ot\psi(x\qoo))+\tau_{12}(x_{1t}\ot\phi(x_{2t}))$,
\item[(CCP10)] $x_{\langle1\rangle}\ot\Delta_H(x_{\langle0\rangle})-x_{1q}\ot s(x_{2q})
=\tau\psi(x\li)\ot x\lii+\tau\phi(x\qoi)\ot x\qoo\\
+\tau_{12}(x\li\ot\tau\psi(x\lii))+\tau_{12}(x\qoo\ot\tau\phi(x\qi))$,
\item[(CCP11)] $x_{\langle1\rangle}\ot\beta(x_{\langle0\rangle})-x_{1q}\ot \rho(x_{2q})
=\tau\psi(x\qoo)\ot x\qi+\tau\phi(x_{1t})\ot x_{2t}\\
-\tau_{12}(x\qoo\ot\delta_A(x\qi))-\tau_{12}(x\li\ot q(x\lii))$,
\item[(CCP12)] $x_{1q}\ot \gamma(x_{2q})-x_{\langle1\rangle}\ot \al(x_{\langle0\rangle})
=q(x\li)\ot x\lii+\delta_A(x\qoi)\ot x\qoo\\
-\tau_{12}(x\qoi\ot \tau\psi(x\qoo))-\tau_{12}(x_{1t}\ot\tau\phi(x_{2t}))$.
\end{enumerate}
\end{definition}

\begin{lemma}\label{lem2} Let $(A, H)$ be  a  cycle cross coproduct system. If we define $E=A^{p, s}\# {}^{q, t} H$ to be the vector
space $A\oplus H$ with the comultiplication
$$\delta_{E}(a)=(\delta_{A}+\phi-\tau\phi+p)(a), \quad \delta_{E}(x)=(\delta_{H}+\psi-\tau\psi+q)(x), $$
$$\Delta_{E}(a)=(\Delta_{A}+\rho+\gamma+s)(a), \quad \Delta_{E}(x)=(\Delta_{H}+\alpha+\beta+t)(x), $$
that is
$$\delta_{E}(a)= \sum a\bi \ot a\bii+\sum a_{\langle-1\rangle} \ot a_{\langle0\rangle}-\sum a_{\langle0\rangle}\ot a_{\langle-1\rangle}+\sum a_{1p}\ot a_{2p},$$
$$\delta_{E}(x)= \sum x\bi \ot x\bii+\sum x_{\langle0\rangle} \ot x_{\langle1\rangle}-\sum x_{\langle1\rangle} \ot x_{\langle0\rangle}+\sum x_{1q}\ot x_{2q},$$
$$\Delta_{E}(a)= \sum a\li \ot a\lii+\sum a\moi \ot a\mo+\sum a\mo\ot a\lmi+\sum a_{1s}\ot a_{2s},$$
$$\Delta_{E}(x)= \sum x\li \ot x\lii+ \sum x\qoi \ot x\qoo+\sum x\qoo \ot x\qi+\sum x_{1t}\ot x_{2t},$$
then  $A^{p, s}\# {}^{q, t} H$ forms a noncommutative Poisson coalgebra which we will call it the cycle cross coproduct noncommutative Poisson  coalgebra.
\end{lemma}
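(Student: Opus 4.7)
The plan is to verify directly that $E=A\oplus H$, equipped with $\delta_E$ and $\Delta_E$ as displayed, satisfies the three defining axioms of a noncommutative Poisson coalgebra: coassociativity of $\Delta_E$, the Lie coalgebra axioms for $\delta_E$, and the Poisson compatibility $(\id\ot\Delta_E)\delta_E=(\delta_E\ot\id)\Delta_E+(\tau\ot\id)(\id\ot\delta_E)\Delta_E$. The organizing principle is that $E^{\ot 3}=(A\oplus H)^{\ot 3}$ splits as a direct sum of eight tensor components indexed by words in $\{A,H\}^3$; each axiom applied on an input from $A$ or $H$ therefore decomposes into eight component identities. The hypotheses of the cocycle cross coproduct system are tailored precisely so that each such component identity is either a structural identity on $A$ or $H$ alone, one of the bicomodule-coalgebra identities recalled in Section~2, or one of the mixed conditions (CCP1)--(CCP12), (CC3), (CC4), (CC7), (CC8).

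To check coassociativity of $\Delta_E$ on $a\in A$, I expand $\Delta_E(a)=\Delta_A(a)+\rho(a)+\gamma(a)+s(a)$, apply $\Delta_E$ once more to each tensor factor, and compare both sides of $(\Delta_E\ot\id)\Delta_E=(\id\ot\Delta_E)\Delta_E$. The $A^{\ot 3}$ component is coassociativity of $\Delta_A$; the components with exactly one $H$-entry are the $H$-bicomodule-coalgebra identities for $\rho$ and $\gamma$; the components with exactly two $H$-entries are the compatibilities relating $\rho,\gamma,s$ to $\Delta_H$; and the pure $H^{\ot 3}$ component is precisely (CC3). The dual check on $x\in H$ uses (CC7). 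The Lie coalgebra axioms are verified in the same way: antisymmetry on $A$ reduces to the antisymmetry of $\delta_A$, the manifestly antisymmetric combination $\phi-\tau\phi$, and the (implicit) antisymmetry of $p$, and dually on $H$; co-Jacobi decomposes componentwise, with the pure $A^{\ot 3}$ and $H^{\ot 3}$ components using co-Jacobi of $\delta_A,\delta_H$ together with (CC3) and (CC7), and the mixed components using that $A$ is an $H$-comodule Lie coalgebra, that $H$ is an $A$-comodule Lie coalgebra, and the identities (CC4), (CC8).

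The Poisson compatibility is checked in the same spirit. On $a\in A$, I expand the three triple tensors appearing in the identity and collect terms by tensor type. The $A^{\ot 3}$ component recovers the Poisson compatibility already holding in $A$; each of the six mixed components is closed by one or two of (CCP1)--(CCP12); the pure $H^{\ot 3}$ component follows by combining (CC3), (CC7) with the Poisson compatibility in $H$. The dual computation on $x\in H$ invokes the remaining (CCP)-identities together with (CC4), (CC8). The main obstacle is not conceptual but combinatorial: with four summands in $\delta_E$, four summands in $\Delta_E$, and three tensor slots to track, one generates on the order of a hundred elementary subterms that must be partitioned by tensor type and matched against the twelve (CCP)-identities and four cycle identities. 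Laying out in advance a table pairing each mixed tensor component with the specific identity that closes it will be essential; once such a table is fixed, the verification reduces to routine term-matching, and the whole argument is exactly dual to the proof of the preceding cocycle cross product algebra lemma.
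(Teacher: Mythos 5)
Your proposal is correct and takes essentially the same route as the paper: the paper likewise reduces the whole verification to the single mixed compatibility $(\id\ot\Delta_{E})\delta_{E}=(\delta_{E}\ot\id)\Delta_{E}+(\tau\ot\id)(\id\ot\delta_{E})\Delta_{E}$, expands both sides on a general element of $A\oplus H$, and matches the resulting tensor components against (CCP1)--(CCP12). The only difference is that the paper dismisses coassociativity of $\Delta_E$ and the Lie coalgebra axioms for $\delta_E$ in a single sentence as already established, whereas you verify them explicitly via the bicomodule identities and (CC3), (CC4), (CC7), (CC8) --- additional but harmless (indeed, more careful) work.
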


\begin{proof} Due to the fact that $(E, ~\delta)$  and  $(E, ~\Delta)$  are respectively a Lie coalgebra and a coassociative coalgebra, we only  need to prove
$(\id\ot\Delta_{E})\delta_{E}(a, x)=(\delta_{E}\ot\id)\Delta_{E}(a,x)+(\tau\ot\id)(\id\ot\delta_{E})\Delta_{E}(a, x)$.

 The left hand side is equal to
\begin{eqnarray*}
&&(\id\ot\Delta_{E})\delta_{E}(a, x)\\
&=&(\id\ot\Delta_{E})(a\bi\ot a\bii+a_{\langle-1\rangle}\ot a_{\langle0\rangle}-a_{\langle0\rangle}\ot a_{\langle-1\rangle}+a_{1p}\ot a_{2p}
+x\bi\ot x\bii\\
&&+x_{\langle0\rangle}\ot x_{\langle1\rangle}-x_{\langle1\rangle}\ot x_{\langle0\rangle}+x_{1q}\ot x_{2q})\\
&=&a_{[1]} \otimes \Delta_{A}\left(a_{[2]}\right)+a_{[1]} \otimes \rho\left(a_{[2]}\right)
+a_{[1]} \otimes \gamma\left(a_{[2]}\right)+a_{[1]} \otimes s\left(a_{[2]}\right)\\
&&+a_{\langle-1\rangle} \otimes \Delta_{A}\left(a_{\langle0\rangle}\right)
+a_{\langle-1\rangle} \otimes \rho\left(a_{\langle0\rangle}\right)
+a_{\langle-1\rangle} \otimes \gamma\left(a_{\langle0\rangle}\right)
+a_{\langle-1\rangle} \otimes s\left(a_{\langle0\rangle}\right)\\
&&-a_{\langle0\rangle} \otimes \Delta_{H}\left(a_{\langle-1\rangle}\right)
-a_{\langle0\rangle} \otimes \al\left(a_{\langle-1\rangle}\right)
-a_{\langle0\rangle} \otimes \beta\left(a_{\langle-1\rangle}\right)
-a_{\langle0\rangle} \otimes t\left(a_{\langle-1\rangle}\right)\\
&&+a_{1p}\ot \Delta_H(a_{2p})+a_{1p}\ot \al(a_{2p})+a_{1p}\ot \beta(a_{2p})+a_{1p}\ot t(a_{2p})\\
&&+x_{[1]} \otimes \Delta_{H}\left(x_{[2]}\right)+x\bi\ot \al(x\bii)
+x\bi\ot \beta(x\bii)+x\bi\ot t(x\bii)\\
&&+x_{\langle0\rangle} \otimes \Delta_A\left(x_{\langle1\rangle}\right)+x_{\langle0\rangle} \otimes \rho\left(x_{\langle1\rangle}\right)
+x_{\langle0\rangle} \otimes \gamma\left(x_{\langle1\rangle}\right)
+x_{\langle0\rangle} \otimes s\left(x_{\langle1\rangle}\right)\\
&&-x_{\langle1\rangle} \otimes \Delta_H\left(x_{\langle0\rangle}\right)-x_{\langle1\rangle} \otimes \al\left(x_{\langle0\rangle}\right)-x_{\langle1\rangle} \otimes \beta\left(x_{\langle0\rangle}\right)
-x_{\langle1\rangle} \otimes t\left(x_{\langle0\rangle}\right)\\
&&+x_{1q}\ot \Delta_A(x_{2q})+x_{1q}\ot \rho(x_{2q})+x_{1q}\ot \gamma(x_{2q})+x_{1q}\ot s(x_{2q}),
\end{eqnarray*}
and the right hand side is equal to
\begin{eqnarray*}
&&(\delta_{E}\ot\id)\Delta_{E}(a, x)+(\tau\ot\id)(\id\ot\delta_{E})\Delta_{E}(a, x)\\
&=&(\delta_{E}\ot\id)(a\li\ot a\lii+a\loi\ot a\loo+a\loo\ot a\lmi+a_{1s}\ot a_{2s}+x\li\ot x\lii\\
&&+x\qoi\ot x\qoo+x\qoo\ot x\qi+x_{1t}\ot x_{2t})+(\tau\ot\id)(\id\ot\delta_{E})(a\li\ot a\lii\\
&&+a\loi\ot a\loo+a\loo\ot a\lmi+a_{1s}\ot a_{2s}+x\li\ot x\lii+x\qoi\ot x\qoo\\
&&+x\qoo\ot x\qi+x_{1t}\ot x_{2t})\\
&=&\delta_{A}\left(a_{1}\right) \otimes a_{2}+\phi\left(a_{1}\right) \otimes a_{2}-\tau\phi\left(a_{1}\right) \otimes a_{2}+p(a\li)\ot a\lii
+\delta_{H}\left(a_{(-1)}\right) \otimes a_{(0)}\\
&&+\psi(a\loi)\ot a\loo-\tau\psi(a\loi)\ot a\loo+q(a\loi)\ot a\loo+\delta_{A}\left(a_{(0)}\right) \otimes a_{(1)}\\
&&+\phi\left(a_{(0)}\right) \otimes a_{(1)}
-\tau\phi\left(a_{(0)}\right) \otimes a_{(1)}+p(a\loo)\ot a\lmi+\delta_H(a_{1s})\ot a_{2s}\\
&&+\psi(a_{1s})\ot a_{2s}-\tau\psi(a_{1s})\ot a_{2s}+q(a_{1s})\ot a_{2s}+\delta_{H}\left(x_{1}\right) \otimes x_{2}\\
&&+\psi(x\li)\ot x\lii-\tau\psi(x\li)\ot x\lii+q(x\li)\ot x\lii+\delta_A(x\qoi)\ot x\qoo+\phi(x\qoi)\ot x\qoo\\
&&-\tau\phi(x\qoi)\ot x\qoo+p(x\qoi)\ot x\qoo+\delta_H(x\qoo)\ot x\qi
+\psi(x\qoo)\ot x\qi\\
&&-\tau\psi(x\qoo)\ot x\qi+q(x\qoo)\ot x\qi+\delta_A(x_{1t})\ot x_{2t}+\phi(x_{1t})\ot x_{2t}\\
&&-\tau\phi(x_{1t})\ot x_{2t}+p(x_{1t})\ot x_{2t}+\tau_{12}(a\li\ot\delta_{A}(a\lii))+\tau_{12}(a\li\ot\phi(a\lii))\\
&&-\tau_{12}(a\li\ot\tau\phi(a\lii))+\tau_{12}(a\li\ot p(a\lii))+\tau_{12}(a\loi\ot\delta_{A}(a\loo))\\
&&+\tau_{12}(a\loi\ot\phi(a\loo))-\tau_{12}(a\loi\ot\tau\phi(a\loo))+\tau_{12}(a\loi\ot p(a\loo))\\
&&+\tau_{12}(a\loo\ot\delta_{H}(a\lmi))+\tau_{12}(a\loo\ot\psi(a\lmi))-\tau_{12}(a\loo\ot\tau\psi(a\lmi))\\
&&+\tau_{12}(a\loo\ot q(a\lmi))+\tau_{12}(a_{1s}\ot \delta_H(a_{2s}))+\tau_{12}(a_{1s}\ot \psi(a_{2s}))\\
&&-\tau_{12}(a_{1s}\ot \tau\psi(a_{2s}))+\tau_{12}(a_{1s}\ot q(a_{2s}))+\tau_{12}(x\li\ot\delta_{H}(x\lii))\\
&&+\tau_{12}(x\li\ot\psi(x\lii))-\tau_{12}(x\li\ot\tau\psi(x\lii))+\tau_{12}(x\li\ot q(x\lii))\\
&&+\tau_{12}(x\qoi\ot \delta_H(x\qoo))+\tau_{12}(x\qoi\ot \psi(x\qoo))-\tau_{12}(x\qoi\ot \tau\psi(x\qoo))\\
&&+\tau_{12}(x\qoi\ot q(x\qoo))+\tau_{12}(x\qoo\ot \delta_A(x\qi))+\tau_{12}(x\qoo\ot \phi(x\qi))\\
&&-\tau_{12}(x\qoo\ot \tau\phi(x\qi))+\tau_{12}(x\qoo\ot p(x\qi))+\tau_{12}(x_{1t}\ot \delta_A(x_{2t}))\\
&&+\tau_{12}(x_{1t}\ot \phi(x_{2t}))-\tau_{12}(x_{1t}\ot \tau\phi(x_{2t}))+\tau_{12}(x_{1t}\ot p(x_{2t})).
\end{eqnarray*}
 Thus the two sides are equal to each other if and only if (CCP1)--(CCP12) hold.
\end{proof}

\begin{definition}\label{cocycledmp}
Let $A, H$ be both noncommutative Poisson algebras and noncommutative Poisson  coalgebras. If  the following conditions hold:
\begin{enumerate}
\item[(CDM1)] $\phi(a b)+\psi(\nu(a, b))= (a_{\langle-1\rangle}\leftarrow b)\ot  a_{\langle0\rangle}+( a\rightarrow b_{\langle-1\rangle})\ot  b_{\langle0\rangle}+b\loi\ot[a, b\loo]\\
    +a\lmi\ot[b, a\loo]+\nu(a\bi, b)\ot a\bii+\nu(a, b\bi)\ot b\bii-b_{1s}\ot(b_{2s}\trr a)-a_{2s}\ot(a_{1s}\trr b)$,
\item[(CDM2)] $\tau\phi(ab)+\tau\psi(\nu(a, b))= a_{\langle0\rangle}b\ot  a_{\langle-1\rangle}+a b_{\langle0\rangle}\ot  b_{\langle-1\rangle}
+b\loo\ot(b\lmi\trl a)+a\loo\ot(a\loi\trl b)\\
-(a_{1p}\ppr b)\ot a_{2p}-(a\ppl b_{1p})\ot b_{2p}-b\li\ot\theta(a,b\lii)-a\lii\ot\theta(b,a\li)$,
\item[(CDM3)] $\psi(xy)+\phi(\omega(x, y))= x_{\langle0\rangle} y\ot x_{\langle1\rangle}+x  y_{\langle0\rangle}\ot  y_{\langle1\rangle}
+ y\qoo\ot(x\trr y\qi)+x\qoo\ot(y\trr x\qoi)\\
+(x_{1q}\rightarrow y)\ot x_{2q}+(x\leftarrow y_{1q})\ot y_{2q}+y\li\ot\sigma(x, y\lii)+x\lii\ot\sigma(y, x\li)$,
\item[(CDM4)] $\tau\psi(xy)+\tau\phi(\omega(x,y))=(x_{\langle1\rangle}\ppl y)\ot x_{\langle0\rangle}+(x\ppr y_{\langle1\rangle})\ot y_{\langle0\rangle}
-y\qoi\ot[x, y\qoo]\\
-x\qi\ot[y, x\qoo]-\omega(x\bi, y)\ot x\bii-\omega(x, y\bi)\ot y\bii-y_{1t}\ot(x\trl y_{2t})-x_{2t}\ot(y\trl x_{1t})$,
\item[(CDM5)]  $\delta_A(x\ppr b)+q(x\leftarrow b)=( x_{\langle0\rangle}\ppr b)\ot  x_{\langle1\rangle}+(x\ppr b\bi)\ot b\bii-x\qi\ot(x\qoo\trr b)\\
+b\li\ot(x\trr b\lii)+x_{1q}b\ot x_{2q}+\omega(x, b_{\langle-1\rangle})\ot b_{\langle0\rangle}+b\loo\ot\sigma(x, b\lmi)+x_{2t}\ot[b, x_{1t}]$,
\item[(CDM6)]  $\delta_A(a\ppl y)+q(a\rightarrow y)=(a\ppl y_{\langle0\rangle})\ot y_{\langle1\rangle}
+(a\bi\ppl y)\ot a\bii-y\qoi\ot(y\qoo\trr a)+a\lii\ot (y\trr a\li)\\
+ay_{1q}\ot y_{2q}+\omega(a_{\langle-1\rangle}, y)\ot a_{\langle0\rangle}+y_{1t}\ot [a, y_{2t}]+a\loo\ot \sigma(y, a\loi)$,
\item[(CDM7)] $\delta_H(x\leftarrow b)+p(x\ppr b)=(x\bi\leftarrow b)\ot x\bii-(x\leftarrow b_{\langle0\rangle})\ot b_{\langle-1\rangle}+b\loi\ot(x\trl b\loo)\\
-x\lii\ot(x\li\trl b)-\nu(x_{\langle1\rangle}, b)\ot x_{\langle0\rangle}+xb_{1p}\ot b_{2p}+b_{1s}\ot[x, b_{2s}]
+x\qoo\ot\theta(b, x\qoi)$,
\item[(CDM8)] $\delta_H(a\rightarrow y)+p(a\ppl y)=(a\rightarrow y\bi)\ot y\bii-(a_{\langle0\rangle}\rightarrow y)\ot a_{\langle-1\rangle}+a\lmi\ot(y\trl a\loo)\\
-y\li\ot(y\lii\trl a)-\nu(a, y_{\langle1\rangle})\ot y_{\langle0\rangle}+a_{1p}y\ot a_{2p}+a_{2s}\ot[y, a_{1s}]
+y\qoo\ot\theta(a, y\qi)$,
\item[(CDM9)] $\phi(x\ppr b)+\psi(x\leftarrow b)=(x_{\langle0\rangle}\leftarrow b)\ot x_{\langle1\rangle}+(x\leftarrow b\bi)\ot b\bii
+x b_{\langle-1\rangle}\ot b_{\langle0\rangle}\\
+b\loi\ot(x\trr b\loo)-x\lii\ot(x\li\trr b)+x\qoo\ot[b, x\qoi]+\nu(x_{1q}, b)\ot x_{2q}+b_{1s}\ot\sigma(x, b_{2s})$,
\item[(CDM10)] $\tau\phi(x\ppr b)+\tau\psi(x\leftarrow b)=x_{\langle1\rangle} b\ot x_{\langle0\rangle}+(x\ppr b_{\langle0\rangle})\ot b_{\langle-1\rangle}+x\qi\ot(x\qoo\trl b)\\
-(x\bi\ppr b)\ot x\bii-b\loo\ot[x, b\lmi]-b\li\ot(x\trl b\lii)-\omega(x, b_{1p})\ot b_{2p}-x_{2t}\ot\theta(b, x_{1t})$,
\item[(CDM11)] $\phi(a\ppl y)+\psi(a\rightarrow y)=a_{\langle-1\rangle}y\ot a_{\langle0\rangle}+( a\rightarrow y_{\langle0\rangle})\ot y_{\langle1\rangle}+y\qoo\ot[a, y\qi]\\
-(a\bi\rightarrow y)\ot a\bii+a\lmi\ot(y\trr a\loo)-y\li\ot(y\lii\trr a)+\nu(a, y_{1q})\ot y_{2q}+a_{2s}\ot\sigma(y, a_{1s})$,
\item[(CDM12)] $\tau\phi(a\ppl y)+\tau\psi(a\rightarrow y)=ay_{\langle1\rangle}\ot y_{\langle0\rangle}+( a_{\langle0\rangle}\ppl y)\ot a_{\langle-1\rangle}+y\qoi\ot(y\qoo\trl a)\\
-(a\ppl y\bi)\ot y\bii-a\loo\ot[y, a\loi]-a\lii\ot(y\trl a\li)-\omega(a_{1p}, y)\ot a_{2p}-y_{1t}\ot\theta(a, y_{2t})$,
\item[(CDM13)] $\rho([a, b])+\beta(\theta(a, b))= (a_{\langle-1\rangle}\leftarrow b)\ot  a_{\langle0\rangle}-( b_{(-1)}\trl a)\ot  b_{(0)}
+b\loi\ot[a, b\loo]\\
-a_{\langle-1\rangle}\ot ba_{\langle0\rangle}+\theta(a, b\li)\ot b\lii-b_{1s}\ot(b_{2s}\trr a)+\nu(a\bi, b)\ot a\bii-a_{1p}\ot(b\ppl a_{2p})$,
\item[(CDM14)] $\beta([x, y])+\rho(\sigma(x, y))= [x, y\qoo]\ot y\qi+y\qoo\ot (x\trr y\qi)
-x_{\langle0\rangle}\ot(y\ppr x_{\langle1\rangle})\\
+x_{\langle0\rangle}y\ot x_{\langle1\rangle}+(x\trl y_{1t})\ot y_{2t}+y\li\ot\sigma(x, y\lii)+(x_{1q}\rightarrow y)\ot x_{2q}-x\bi\ot\omega(y, x\bii)$,
\item[(CDM15)] $\gamma([a ,b])+\alpha(\theta(a,b))= a_{\langle0\rangle}\ot(b\rightarrow a_{\langle-1\rangle})- b_{(0)}\ot( b_{(1)}\trl a)
+[a,b\loo]\ot b\lmi\\
-a\bi\ot \nu(b, a\bii)+b\li\ot\theta(a,b\lii)-(b_{1s}\trr a)\ot b_{2s}+\nu(a\bi, b)\ot a\bii +(a_{\langle-1\rangle}\leftarrow b)\ot a_{\langle0\rangle}$,
\item[(CDM16)] $\al([x,y])+\gamma(\sigma(x,y))= y\qoi\ot[x,y\qoo]+ (x\trr y\qoi)\ot y\qoo
-(x_{\langle1\rangle}\ppl y)\ot x_{\langle0\rangle}\\
+x_{\langle1\rangle}\ot yx_{\langle0\rangle}+y_{1t}\ot(x\trl y_{2t})+\sigma(x,y\li)\ot y\lii-x_{1q}\ot(y\leftarrow x_{2q}) +\omega(x\bi, y)\ot x\bii$,
\item[(CDM17)]  $\Delta_A(x\trr b)+t(x\trl b)=(x\trr b\li)\ot b\lii+b\li\ot(x\trr b\lii)+( x_{\langle0\rangle}\ppr b)\ot  x_{\langle1\rangle}\\
    +x_{\langle1\rangle}\ot (b\ppl x_{\langle0\rangle})+\sigma(x, b\loi)\ot b\loo+b\loo\ot\sigma(x, b\lmi)+x_{1q}b\ot x_{2q}-x_{1q}\ot bx_{2q}$,
\item[(CDM18)]  $\Delta_A(y\trr a)+t(y\trl a)=-(a\bi\ppl y)\ot a\bii+a\bi\ot(y\ppr a\bii)+( y\qoo\trr a)\ot  y\qi\\
    +y\qoi\ot (y\qoo\trr a)-[a, y_{1t}]\ot y_{2t}-y_{1t}\ot[a, y_{2t}]
    -a_{\langle0\rangle}\ot\omega(y, a_{\langle-1\rangle})-\omega(a_{\langle-1\rangle}, y)\ot a_{\langle0\rangle}$,
\item[(CDM19)] $\Delta_H(x\trl b)+s(x\trr b)=(x\trl b\loo)\ot b\lmi+b\loi\ot(x\trl b\loo)+(x\bi\leftarrow b)\ot x\bii\\
    -x\bi\ot(b\rightarrow x\bii)+[x, b_{1s}]\ot b_{2s}+b_{1s}\ot[x, b_{2s}]-\nu(x_{\langle1\rangle}, b)\ot x_{\langle0\rangle}
    -x_{\langle0\rangle}\ot \nu(b, x_{\langle1\rangle})$,
\item[(CDM20)] $\Delta_H(y\trl a)+s(y\trr a)=(y\li\trl a)\ot y\lii+y\li\ot(y\lii\trl a)+(a_{\langle0\rangle}\rightarrow y)\ot a_{\langle-1\rangle}\\
    +a_{\langle-1\rangle}\ot(y\leftarrow a_{\langle0\rangle})
    -\theta(a, y\qoi)\ot y\qoo-y\qoo\ot\theta(a, y\qi)-a_{1p}y\ot a_{2p}+a_{1p}\ot ya_{2p}$,
\item[(CDM21)] $\rho(x\trr b)+\beta(x\trl b)=(x\trl b\li)\ot b\lii+[x, b\loi]\ot b\loo+b\loi\ot(x\trr b\loo)
-x_{\langle0\rangle}\ot bx_{\langle1\rangle}\\
    +(x_{\langle0\rangle}\leftarrow b)\ot x_{\langle1\rangle}-x\bi\ot(b\ppl x\bii)
+b_{1s}\ot \sigma(x, b_{2s})+\nu(x_{1q}, b)\ot x_{2q}$,
\item[(CDM22)] $\rho(y\trr a)+\beta(y\trl a)=(y\qoo\trl a)\ot y\qi-y\qoo\ot[a, y\qi]-(a\bi\rightarrow y)\ot a\bii\\
    -a_{\langle-1\rangle}y\ot a_{\langle0\rangle}+y\li\ot(y\lii\trr a)
+a_{\langle-1\rangle}\ot (y\ppr a_{\langle0\rangle})-\theta(a, y_{1t})\ot y_{2t}+a_{1p}\ot \omega(y, a_{2p})$,
\item[(CDM23)] $\gamma(x\trr b)+\al(x\trl b)=b\li\ot(x\trl b\lii)+ b\loo\ot[x,b\lmi]+(x\trr b\loo)\ot b\lmi\\
    +x_{\langle1\rangle}\ot (b\rightarrow x_{\langle0\rangle})+(x\bi\ppr b)\ot x\bii
- x_{\langle1\rangle}b\ot x_{\langle0\rangle}+\sigma(x,b_{1s})\ot b_{2s}-x_{1q}\ot\nu(b, x_{2q})$,
\item[(CDM24)] $\gamma(y\trr a)+\al(y\trl a)= y\qoi\ot (y\qoo\trl a)-[a,y\qoi]\ot y\qoo+(y\li\trr a)\ot y\lii-a_{\langle0\rangle}\ot ya_{\langle-1\rangle}\\
    +a\bi\ot(y\leftarrow a\bii)
+(a_{\langle0\rangle}\ppl y)\ot a_{\langle-1\rangle}-y_{1t}\ot\theta(a, y_{2t})-\omega(a_{1p}, y)\ot a_{2p}$.
\end{enumerate}
\noindent then $(A, H)$ is called a \emph{cocycle double matched pair}.
\end{definition}

\begin{definition}\label{cocycle-braided}
(i) A \emph{cocycle braided noncommutative Poisson bialgebra} $A$ is simultaneously a cocycle noncommutative Poisson algebra $(A, \theta, \nu)$ and  a cycle noncommutative Poisson coalgebra $(A, q, t)$ satisfying the  conditions
\begin{enumerate}
\item[(CBB1)] $\delta_{A}(ab)+q(\nu(a,b))=a\bi b\ot a\bii+(a_{\langle-1\rangle}\ppr b)\ot a_{\langle0\rangle}
+ab\bi\ot b\bii+(a\ppl b_{\langle-1\rangle})\ot b_{\langle0\rangle}\\
+b\li\ot[a, b\lii]-b\loo\ot(b\lmi\trr a)+a\lii\ot[b, a\li]-a\loo\ot(a\loi\trr b),$
\item[(CBB2)] $\Delta_{A}([a, b])+t(\theta(a, b))=[a, b\li]\ot b\lii-(b\loi\trr a)\ot b\loo+b\li\ot[a, b\lii]
-b\loo\ot(b\lmi\trr a)\\
+a\bi b\ot a\bii+(a_{\langle-1\rangle}\ppr b)\ot a_{\langle0\rangle}-a\bi\ot ba\bii+a_{\langle0\rangle}\ot(b\ppl a_{\langle-1\rangle}).$
    \end{enumerate}
(ii) A \emph{cocycle braided noncommutative Poisson bialgebra} $H$ is simultaneously a cocycle noncommutative Poisson algebra $(H, \sigma, \omega)$ and a cycle noncommutative Poisson coalgebra $(H, p, s)$ satisfying the conditions
\begin{enumerate}
\item[(CBB3)] $\delta_{H}(xy)+p(\omega(x, y))=x\bi y\ot x\bii-(x_{\langle1\rangle}\rightarrow y)\ot x_{\langle0\rangle}
+xy\bi\ot y\bii-(x\leftarrow y_{\langle1\rangle})\ot y_{\langle0\rangle}\\
+y\li\ot[x, y\lii]+y\qoo\ot(x\trl y\qi)+x\lii\ot[y, x\li]+x\qoo\ot(y\trl x\qoi),$
\item[(CBB4)] $\Delta_{H}([x, y])+s(\sigma(x, y))=[x, y\li]\ot y\lii+(x\trl y\qoi)\ot y\qoo+y\li\ot[x, y\lii]
+y\qoo\ot(x\trl y\qi)\\
+x\bi y\ot x\bii-(x_{\langle1\rangle}\rightarrow y)\ot x_{\langle0\rangle}-x\bi\ot yx\bii-x_{\langle0\rangle}\ot(y\leftarrow x_{\langle1\rangle}).$
    \end{enumerate}
\end{definition}

The next theorem says that we can obtain an ordinary noncommutative Poisson bialgebra from two cocycle braided noncommutative Poisson bialgebras.
\begin{theorem}\label{main2}
Let $A$, $H$ be  cocycle braided noncommutative Poisson bialgebras, $(A, H)$ be a cocycle cross product system and a cycle cross coproduct system.
Then the cocycle cross product noncommutative Poisson  algebra and cycle cross coproduct noncommutative Poisson  coalgebra fit together to become an ordinary
noncommutative Poisson bialgebra if and only if $(A, H)$ forms a cocycle double matched pair. We will call it the cocycle bicrossproduct noncommutative Poisson bialgebra and denote it by $A^{p, s}_{\sigma, \omega}\# {}^{q, t}_{\theta, \nu}H$.
\end{theorem}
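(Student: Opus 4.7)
The plan is to reduce the theorem to a bookkeeping verification of the two noncommutative Poisson bialgebra compatibility axioms \eqref{eq:LB1} and \eqref{eq:LB2} on the vector space $E = A \oplus H$. The algebra and coalgebra halves are already in hand: the cocycle cross-product makes $E$ a noncommutative Poisson algebra by the cocycle cross-product lemma, and the cycle cross-coproduct makes $E$ a noncommutative Poisson coalgebra by Lemma \ref{lem2}. What remains is to check that, for arbitrary $(a,x), (b,y) \in E$, the comultiplications $\delta_E, \Delta_E$ intertwine correctly with the multiplications $\cdot_E, [,]_E$.

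Concretely, I would first expand $\delta_E\bigl((a,x)\cdot_E(b,y)\bigr)$ by applying $\delta_A + \phi - \tau\phi + p$ on the $A$-component and $\delta_H + \psi - \tau\psi + q$ on the $H$-component of the four-term product formula that defines $\cdot_E$. Simultaneously I would expand the right-hand side of \eqref{eq:LB1} for $E$, using the same coproduct summands together with the action, cocycle, and cycle pieces that appear in $\cdot_E$ and $\delta_E$. The analogous expansion of both sides of \eqref{eq:LB2} is then carried out with $[,]_E$ in place of $\cdot_E$ and $\Delta_E = \Delta_A + \rho + \gamma + s$ on $A$, $\Delta_H + \alpha + \beta + t$ on $H$ in place of $\delta_E$. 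After expansion, I would sort the resulting terms by tensor placement — $A\otimes A$, $A\otimes H$, $H\otimes A$, or $H\otimes H$ — and within each placement separate those pieces that involve only elements of $A$, only elements of $H$, or genuinely mix the two.

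The point is that matching the two sides type-by-type produces precisely the listed conditions. From \eqref{eq:LB1}, the purely-$A$ pieces collapse to condition (CBB1) of Definition \ref{cocycle-braided}, the purely-$H$ pieces to (CBB3), and the genuinely mixed pieces to the twelve conditions (CDM1)--(CDM12) of Definition \ref{cocycledmp}; likewise \eqref{eq:LB2} contributes (CBB2), (CBB4), and (CDM13)--(CDM24). Conversely, since each single condition (CBB$i$) or (CDM$j$) is recovered from the compatibility identity for $E$ by specializing the arguments to lie in $A$ or $H$ and projecting onto a single tensor quadrant, the ``only if'' direction is automatic.

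The main obstacle is purely combinatorial. The expansion of either \eqref{eq:LB1} or \eqref{eq:LB2} for $E$ yields on the order of a hundred terms per side, and each must be tracked according to both its tensor quadrant and its origin (base product, action, cocycle $\sigma, \omega, \theta, \nu$, or cycle $p, s, q, t$). The strategy mirrors the proof of the bosonisation theorem and of Theorem \ref{main1}, in which matching tensor quadrant-by-quadrant already unbundled a structurally identical but smaller list of identities; the present calculation differs only by the extra cocycle and cycle summands, which contribute exactly the additional terms distinguishing (CDM1)--(CDM24) of Definition \ref{cocycledmp} from (DM1)--(DM24) of Definition \ref{def:dmp}, and the extra terms in (CBB1)--(CBB4) compared to (BB1), (BB2), (BB1'), (BB2'). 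Beyond careful bookkeeping, no further idea is required, and setting $\sigma = \omega = \theta = \nu = 0$ and $p = s = q = t = 0$ recovers Theorem \ref{main1} as a consistency check.
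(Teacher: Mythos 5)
Your proposal is correct and follows essentially the same route as the paper: the paper likewise takes the algebra and coalgebra structures as already established, expands both sides of the two bialgebra compatibility identities for $E=A\oplus H$ term by term, and reads off that the mixed terms match exactly when (CDM1)--(CDM24) hold, with the purely-$A$ and purely-$H$ terms absorbed by the hypotheses (CBB1)--(CBB4). No substantive difference.
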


\begin{proof}  We only need to check the compatibility conditions
 $$
\begin{aligned}
\delta_{E}((a, x)\cdot_{E} (b,y))=& (a, x)\cdot_{E} (b, y)\bi \ot (b, y)\bii +(a, x)\bi \cdot_{E} (b, y) \ot (a, x)\bii \\
&+(b, y)\li \ot [(a, x), (b, y)\lii]_{E} +(a, x)\lii \ot [(b, y), (a, x)\li]_{E},
 \end{aligned}
$$
$$
\begin{aligned}
\Delta_{E} ([(a, x), (b, y)]_{E})=&[(a, x), (b, y)\li]_{E} \ot (b, y)\lii +(b, y)\li \ot [(a, x), (b, y)\lii]_{E} \\
&-(a, x)\bi \ot (b, y)\cdot_{E} (a, x)\bii+ (a, x)\bi\cdot_{E} (b, y) \ot (a, x)\bii .
\end{aligned}
$$
For the first equation, the left hand side is equal to
\begin{eqnarray*}
&&\delta_{E}((a, x)\cdot_{E} (b, y))\\
&=&\delta_{E}(a b+x\rightharpoonup b +a\ppl y+\omega(x, y), x y+x\leftarrow b+a\ra y+\nu(a, b))\\
&=&\delta_{A}(a b)+ \delta_{A}(x\rightharpoonup b) +\delta_{A}(a\ppl y)+\delta_A(\omega(x, y)) +\phi(a b) +\phi(x\rightharpoonup b) \\
&&+\phi(a\ppl y)+\phi(\omega(x, y)) -\tau\phi(a b) -\tau\phi(x\rightharpoonup b) -\tau\phi(a\ppl y) -\tau\phi(\omega(x, y))\\
&&+p(ab)+p(x\rightharpoonup b) +p(a\ppl y)+p(\omega(x, y))+\delta_{H}(x y)+\delta_{H}(x\leftarrow b)\\
&&+\delta_{H}(a\ra y)+\delta_H(\nu(a, b))+\psi(xy)+\psi(x\leftarrow b)+\psi(a\ra y)+\psi(\nu(a, b))\\
&&-\tau\psi(xy)-\tau\psi(x\leftarrow b)-\tau\psi(a\ra y)-\tau\psi(\nu(a, b))+q(x y)+q(x\leftarrow b)\\
&&+q(a\ra y)+q(\nu(a, b)),
\end{eqnarray*}
and the right hand side is equal to
\begin{eqnarray*}
&&(a, x)\cdot_{E} (b, y)\bi \ot (b, y)\bii +(a, x)\bi \cdot_{E} (b, y) \ot (a, x)\bii \\
&&+(b, y)\li \ot [(a, x), (b, y)\lii]_{E} +(a, x)\lii \ot [(b, y), (a, x)\li]_{E}\\
&=&a b\bi\ot b\bii+(x\ppr b\bi)\ot b\bii+(x\leftarrow b\bi)\ot b\bii+\nu(a, b\bi)\ot b\bii\\
&&+(a\ppl b_{\langle-1\rangle})\ot b_{\langle0\rangle}+\omega(x, b_{\langle-1\rangle})\ot b_{\langle0\rangle}+x b_{\langle-1\rangle}\ot b_{\langle0\rangle}
+(a\rightarrow b_{\langle-1\rangle})\ot b_{\langle0\rangle}\\
&&-a b_{\langle0\rangle}\ot b_{\langle-1\rangle}-(x\ppr b_{\langle0\rangle})\ot b_{\langle-1\rangle}
-(x\leftarrow b_{\langle0\rangle})\ot b_{\langle-1\rangle}-\nu(a,b_{\langle0\rangle})\ot b_{\langle-1\rangle}\\
&&+(a\ppl b_{1p})\ot b_{2p}+\omega(x, b_{1p})\ot b_{2p}+xb_{1p}\ot b_{2p}+(a\rightarrow b_{1p})\ot b_{2p}\\
&&+(a\ppl y\bi)\ot y\bii+\omega(x, y\bi)\ot y\bii+(a\rightarrow y\bi)\ot y\bii +x y\bi\ot y\bii
 \\
&&+(a\ppl y_{\langle0\rangle})\ot y_{\langle1\rangle}+\omega(x, y_{\langle0\rangle})\ot y_{\langle1\rangle}+x y_{\langle0\rangle} \ot y_{\langle1\rangle}+(a\rightarrow y_{\langle0\rangle})\ot y_{\langle1\rangle}\\
&&-a y_{\langle1\rangle}\ot y_{\langle0\rangle}-(x\ppr y_{\langle1\rangle})\ot y_{\langle0\rangle}
-(x\leftarrow y_{\langle1\rangle})\ot y_{\langle0\rangle}-\nu(a, y_{\langle1\rangle})\ot y_{\langle0\rangle}\\
&&+ay_{1q}\ot y_{2q}+(x\ppr y_{1q})\ot y_{2q}+(x\leftarrow y_{1q})\ot y_{2q}+\nu(a, y_{1q})\ot y_{2q}\\
&&+a\bi b\ot a\bii +(a\bi\ppl y)\ot a\bii +(a\bi\rightarrow y)\ot a\bii+\nu(a\bi, b)\ot a\bii\\
&&+(a_{\langle-1\rangle}\ppr b)\ot a_{\langle0\rangle} +\omega(a_{\langle-1\rangle}, y)\ot a_{\langle0\rangle}+a_{\langle-1\rangle} y\ot a_{\langle0\rangle}
+(a_{\langle-1\rangle}\leftarrow b)\ot a_{\langle0\rangle}\\
&&- a_{\langle0\rangle} b\ot a_{\langle-1\rangle}-(a_{\langle0\rangle}\ppl y)\ot a_{\langle-1\rangle} -( a_{\langle0\rangle}\rightarrow y)\ot a_{\langle-1\rangle}-\nu(a_{\langle0\rangle}, b)\ot a_{\langle-1\rangle}\\
&&+(a_{1p}\ppr b)\ot a_{2p}+\omega(a_{1p}, y)\ot a_{2p}+a_{1p}y\ot a_{2p}+(a_{1p}\leftarrow b)\ot a_{2p}\\
&&+(x\bi\ppr b)\ot x\bii+\omega(x\bi, y)\ot x\bii +x\bi y\ot x\bii+(x\bi\leftarrow b)\ot x\bii\\
&&+(x_{\langle0\rangle}\ppr b)\ot x_{\langle1\rangle} +\omega(x_{\langle0\rangle}, y)\ot x_{\langle1\rangle}+x_{\langle0\rangle} y\ot x_{\langle1\rangle}
+(x_{\langle0\rangle}\leftarrow b)\ot x_{\langle1\rangle}\\
&&-x_{\langle1\rangle}b\ot x_{\langle0\rangle}-(x_{\langle1\rangle}\ppl y)\ot x_{\langle0\rangle}
-(x_{\langle1\rangle}\rightarrow y)\ot x_{\langle0\rangle}-\nu(x_{\langle1\rangle}, b)\ot x_{\langle0\rangle}\\
&&+x_{1q}b\ot x_{2q}+(x_{1q}\ppl y)\ot x_{2q}+(x_{1q}\rightarrow y)\ot x_{2q}+\nu(x_{1q},b)\ot x_{2q}\\
&&+b\li\ot [a, b\lii]+b\li\ot (x\trr b\lii)+b\li\ot (x\trl b\lii)+b\li\ot\theta(a, b\lii)\\
&&+b\loi\ot [a, b\loo]+b\loi\ot (x\trr b\loo)+b\loi\ot (x\trl b\loo)+b\loi\ot\theta(a, b\loo)\\
&&-b\loo\ot (b\lmi\trr a)+b\loo\ot\sigma(x, b\lmi)+b\loo\ot [x, b\lmi]-b\loo\ot (b\lmi\trl a)\\
&&-b_{1s}\ot(b_{2s}\trr a)+b_{1s}\ot\sigma(x, b_{2s})+b_{1s}\ot[x, b_{2s}]-b_{1s}\ot(b_{2s}\trl a)\\
&&-y\li\ot (y\lii\trr a)+y\li\ot\sigma(x, y\lii)+y\li\ot [x,y\lii]-y\li\ot (y\lii\trl a)\\
&&-y\qoi\ot(y\qoo\trr a)+y\qoi\ot\sigma(x, y\qoo)+y\qoi\ot[x, y\qoo]-y\qoi\ot(y\qoo\trl a)\\
&&+y\qoo\ot[a, y\qi]+y\qoo\ot (x\trr y\qi)+y\qoo\ot(x\trl y\qi)+y\qoo\ot\theta(a, y\qi)\\
&&+y_{1t}\ot[a, y_{2t}]+y_{1t}\ot(x\trr y_{2t})+y_{1t}\ot(x\trl y_{2t})+y_{1t}\ot\theta(a, y_{2t})\\
&&+a\lii\ot [b, a\li]+a\lii\ot (y\trr a\li)+a\lii\ot (y\trl a\li)+a\lii\ot\theta(b, a\li)\\
&&-a\loo\ot (a\loi\trr b)+a\loo\ot\sigma(y, a\loi)+a\loo\ot[y, a\loi]-a\loo\ot (a\loi\trl b)\\
&&+a\lmi\ot [b, a\loo]+a\lmi\ot (y\trr a\loo)+a\lmi\ot (y\trl a\loo)+a\lmi\ot\theta(b, a\loo)\\
&&-a_{2s}\ot(a_{1s}\trr b)+a_{2s}\ot\sigma(y, a_{1s})+a_{2s}\ot[y, a_{1s}]-a_{2s}\ot(a_{1s}\trl b)\\
&&-x\lii\ot (x\li\trr b)+x\lii\ot\sigma(y,x\li)+x\lii\ot[y,x\li]-x\lii\ot (x\li\trl b)\\
&&+x\qoo\ot[b, x\qoi]+x\qoo\ot(y\trr x\qoi)+x\qoo\ot(y\trl x\qoi)+x\qoo\ot\theta(b, x\qoi)\\
&&-x\qi\ot(x\qoo\trr b)+x\qi\ot\sigma(y, x\qoo)+x\qi\ot[y, x\qoo]-x\qi\ot(x\qoo\trl b)\\
&&+x_{2t}\ot[b, x_{1t}]+x_{2t}\ot(y\trr x_{1t})+x_{2t}\ot(y\trl x_{1t})+x_{2t}\ot\theta(b, x_{1t}).
\end{eqnarray*}
If we compare both the two sides item by item, one will find all the  cocycle double matched pair conditions (CDM1)--(CDM12) in
Definition \ref{cocycledmp}.

For the second equation, the left hand side is equal to
\begin{eqnarray*}
&&\Delta_{E} ([(a, x), (b, y)]_{E})\\
&=&\Delta_E( [a, b]+x\trr b-y\trr a+\sigma(x, y), [x, y]+x\trl b-y\trl a+\theta(a, b))\\
&=&\Delta_A( [a, b])+\Delta_A(x\trr b)-\Delta_A(y\trr a)+\Delta_A(\sigma(x, y))+\rho([a, b])+\rho(x\trr b)\\
&&-\rho(y\trr a)+\rho(\sigma(x, y))+\gamma([a, b])+\gamma(x\trr b)-\gamma(y\trr a)+\gamma(\sigma(x, y))\\
&&+s([a, b])+s(x\trr b)-s(y\trr a)+s(\sigma(x, y))+\Delta_{H}([x, y])+\Delta_{H}(x\trl b)\\
&&-\Delta_{H}(y\trl a)+\Delta_H(\theta(a, b))+\al([x, y])+\al(x\trl b)-\al(y\trl a)+\al(\theta(a, b))\\
&&+\beta([x, y])+\beta(x\trl b)-\beta(y\trl a)+\beta(\theta(a, b))+t([x, y])+t(x\trl b)\\
&&-t(y\trl a)+t(\theta(a, b)),
\end{eqnarray*}
and the right hand side is equal to
\begin{eqnarray*}
&&[(a, x), (b, y)\li]_{E} \ot (b, y)\lii +(b, y)\li \ot [(a, x), (b, y)\lii]_{E} \\
&&-(a, x)\bi \ot (b, y)\cdot_{E} (a, x)\bii+ (a, x)\bi\cdot_{E} (b, y) \ot (a, x)\bii \\
&=&[a, b\li]\ot b\lii+(x\trr b\li)\ot b\lii+(x\trl b\li)\ot b\lii+\theta(a, b\li)\ot b\lii\\
&&-(b\loi\trr a)\ot b\loo+\sigma(x, b\loi)\ot b\loo+[x, b\loi]\ot b\loo-(b\loi\trl a)\ot b\loo\\
&&+[a, b\loo]\ot b\lmi+(x\trr b\loo)\ot b\lmi+(x\trl b\loo)\ot b\lmi+\theta(a, b\loo)\ot b\lmi\\
&&-(b_{1s}\trr a)\ot b_{2s}+\sigma(x, b_{1s})\ot b_{2s}+[x, b_{1s}]\ot b_{2s}-(b_{1s}\trl a)\ot b_{2s}\\
&&-(y\li\trr a)\ot y\lii+\sigma(x, y\li)\ot y\lii+[x, y\li]\ot y\lii-(y\li\trl a)\ot y\lii\\
&&+[a, y\qoi]\ot y\qoo+(x\trr y\qoi)\ot y\qoo+(x\trl y\qoi)\ot y\qoo+\theta(a, y\qoi)\ot y\qoo\\
&&-(y\qoo\trr a)\ot y\qi+\sigma(x, y\qoo)\ot y\qi+[x, y\qoo]\ot y\qi-(y\qoo\trl a)\ot y\qi\\
&&+[a, y_{1t}]\ot y_{2t}+(x\trr y_{1t})\ot y_{2t}+(x\trl y_{1t})\ot y_{2t}+\theta(a, y_{1t})\ot y_{2t}\\
&&+b\li\ot[a, b\lii]+b\li\ot(x\trr b\lii)+b\li\ot(x\trl b\lii)+b\li\ot\theta(a, b\lii)\\
&&+b\loi\ot[a, b\loo]+b\loi\ot(x\trr b\loo)+b\loi\ot(x\trl b\loo)+b\loi\ot\theta(a, b\loo)\\
&&-b\loo\ot(b\lmi\trr a)+b\loo\ot\sigma(x, b\lmi)+b\loo\ot[x, b\lmi]-b\loo\ot(b\lmi\trl a)\\
&&-b_{1s}\ot(b_{2s}\trr a)+b_{1s}\ot\sigma(x, b_{2s})+b_{1s}\ot[x, b_{2s}]-b_{1s}\ot(b_{2s}\trl a)\\
&&-y\li\ot(y\lii\trr a)+y\li\ot\sigma(x, y\lii)+y\li\ot[x, y\lii]-y\li\ot(y\lii\trl a)\\
&&-y\qoi\ot(y\qoo\trr a)+y\qoi\ot\sigma(x, y\qoo)+y\qoi\ot[x, y\qoo]-y\qoi\ot(y\qoo\trl a)\\
&&+y\qoo\ot[a, y\qi]+y\qoo\ot(x\trr y\qi)+y\qoo\ot(x\trl y\qi)+y\qoo\ot\theta(a, y\qi)\\
&&+y_{1t}\ot[a, y_{2t}]+y_{1t}\ot(x\trr y_{2t})+y_{1t}\ot(x\trl y_{2t})+y_{1t}\ot\theta(a, y_{2t})\\
&&-a\bi\ot b a\bii-a\bi\ot (y\ppr a\bii)-a\bi\ot(y\leftarrow a\bii)-a\bi\ot\nu(b, a\bii)\\
&&-a_{\langle-1\rangle}\ot b a_{\langle0\rangle}-a_{\langle-1\rangle}\ot(y\ppr a_{\langle0\rangle})
-a_{\langle-1\rangle}\ot(y\leftarrow a_{\langle0\rangle})-a_{\langle-1\rangle}\ot\nu(b, a_{\langle0\rangle})\\
&&+a_{\langle0\rangle}\ot(b\ppl a_{\langle-1\rangle})
+a_{\langle0\rangle}\ot\omega(y, a_{\langle-1\rangle})+a_{\langle0\rangle}\ot y a_{\langle-1\rangle}
+a_{\langle0\rangle}\ot(b\rightarrow a_{\langle-1\rangle})\\
&&-a_{1p}\ot(b\ppl a_{2p})-a_{1p}\ot\omega(y, a_{2p})-a_{1p}\ot ya_{2p}-a_{1p}\ot(b\rightarrow a_{2p})\\
&&-x\bi\ot(b\ppl x\bii)-x\bi\ot\omega(y, x\bii)-x\bi\ot y x\bii-x\bi\ot(b\rightarrow x\bii)\\
&&-x_{\langle0\rangle}\ot bx_{\langle1\rangle}-x_{\langle0\rangle}\ot(y\ppr x_{\langle1\rangle})
-x_{\langle0\rangle}\ot(y\leftarrow x_{\langle1\rangle})-x_{\langle0\rangle}\ot\nu(b, x_{\langle1\rangle})\\
&&+x_{\langle1\rangle}\ot(b\ppl x_{\langle0\rangle})+x_{\langle1\rangle}\ot\omega(y, x_{\langle0\rangle})
+x_{\langle1\rangle}\ot yx_{\langle0\rangle}+x_{\langle1\rangle}\ot(b\rightarrow x_{\langle0\rangle})\\
&&-x_{1q}\ot bx_{2q}-x_{1q}\ot(y\ppr x_{2q})-x_{1q}\ot(y\leftarrow x_{2q})-x_{1q}\ot\nu(b, x_{2q})\\
&&+a\bi b\ot a\bii+(a\bi\ppl y)\ot a\bii+(a\bi\rightarrow y)\ot a\bii+\nu(a\bi, b)\ot a\bii\\
&&+(a_{\langle-1\rangle}\ppr b)\ot a_{\langle0\rangle}+\omega(a_{\langle-1\rangle}, y)\ot a_{\langle0\rangle}+a_{\langle-1\rangle}y\ot a_{\langle0\rangle}+(a_{\langle-1\rangle}\leftarrow b)\ot a_{\langle0\rangle}\\
&&-a_{\langle0\rangle}b\ot a_{\langle-1\rangle}-(a_{\langle0\rangle}\ppl y)\ot a_{\langle-1\rangle}
-(a_{\langle0\rangle}\rightarrow y)\ot a_{\langle-1\rangle}-\nu(a_{\langle0\rangle}, b)\ot a_{\langle-1\rangle}\\
&&+(a_{1p}\ppr b)\ot a_{2p}+\omega(a_{1p}, y)\ot a_{2p}+a_{1p}y\ot a_{2p}+(a_{1p}\leftarrow b)\ot a_{2p}\\
&&+(x\bi\ppr b)\ot x\bii+\omega( x\bi, y)\ot x\bii+x\bi y\ot x\bii+(x\bi\leftarrow b)\ot x\bii\\
&&+(x_{\langle0\rangle}\ppr b)\ot x_{\langle1\rangle}+\omega(x_{\langle0\rangle}, y)\ot x_{\langle1\rangle}+x_{\langle0\rangle} y\ot x_{\langle1\rangle}
+(x_{\langle0\rangle}\leftarrow b)\ot x_{\langle1\rangle}\\
&&-x_{\langle1\rangle} b\ot x_{\langle0\rangle}-(x_{\langle1\rangle}\ppl y)\ot x_{\langle0\rangle}-( x_{\langle1\rangle}\rightarrow y)\ot x_{\langle0\rangle}-\nu(x_{\langle1\rangle}, b)\ot x_{\langle0\rangle}\\
&&+x_{1q}b\ot x_{2q}+(x_{1q}\ppl y)\ot x_{2q}+(x_{1q}\rightarrow y)\ot x_{2q}+\nu(x_{1q}, b)\ot x_{2q}.
\end{eqnarray*}
If we compare both the two sides term by term, one obtain all the  cocycle double matched pair conditions (CDM13)--(CDM24) in
Definition \ref{cocycledmp}.

This complete the proof.
\end{proof}

\section{Extending structures for noncommutative Poisson bialgebras}
In this section, we will study the extending problem for noncommutative Poisson  bialgebras.
We will find some special cases when the braided noncommutative Poisson  bialgebra is reduced into an ordinary     noncommutative Poisson  bialgebra.
It is proved that the extending problem can be solved by using of the non-abelian cohomology theory based on our cocycle bicrossedproduct for braided noncommutative Poisson bialgebras in last section.

\subsection{Extending structures for noncommutative Poisson algebras }
First we are going to study extending problem for noncommutative Poisson algebras.

There are two cases for $A$ to be a noncommutative Poisson algebra in the cocycle cross product system defined in last section, see condition (CC6). The first case is when we let $\ppr$,$\ppl$ and $\trr$ to be trivial and $\theta\neq 0, \nu\neq 0$,  then from conditions (CP1) and (CP4) we get $\si(x, \nu(a, b))=\omega(x, \theta(a, b))=0$, since $\theta\neq 0, \nu\neq 0$ we assume $\sigma=0, \omega=0$ for simplicity, thus  we obtain the following type $(a1)$  unified product for noncommutative Poisson algebras.

\begin{lemma}(\cite{AM5})
Let ${A}$ be a noncommutative Poisson algebra and $V$ a vector space. An extending datum of ${A}$ by $V$ of type (a1)  is  $\Omega^{(1)}({A}, V)$ consisting of bilinear maps
\begin{eqnarray*}
&& \triangleleft : V \otimes A \to V,  \quad \theta: A\otimes A \to V,
\quad \leftarrow: V \otimes A \to V, \quad \rightarrow: A \otimes V \to V,  \quad \nu: A\otimes A \to V.
\end{eqnarray*}
%Let $\Omega^{(1)}({A}, V)$ be an extending datum of ${A}$ by $V$.
Denote by $A_{}\#_{\theta, \nu}V$ the vector space $E={A}\oplus V$ together with the multiplication given by
\begin{eqnarray}
\left [(a, x), \, (b, y) \right ] &: =& \bigl( [a, \, b], \, \,\,
[x, y]+x\trl b - y \trl a + \theta(a, b) \bigl),\\
(a, \, x) \cdot (b, \, y) &: =& \bigl( ab , \,\, xy+x\leftarrow b + a\rightarrow y  +
\nu(a, b) \bigl). \label{multunifJ}
\label{brackunifJ}
\end{eqnarray}
Then $A_{}\# {}_{\theta, \nu}V$ is a noncommutative Poisson algebra if and only if the following compatibility conditions hold for all $a$, $b\in {A}$, $x$, $y$, $z\in V$:
\begin{enumerate}
\item[(A0)] $\bigl(\rightarrow, \, \leftarrow  \, \nu)$ is an algebra extending system of the associative algebra
$A$ trough $V$ and $\bigl(\trl,  \,
\theta \bigl)$ is a Lie extending system of the
Lie algebra $A$ trough $V$,
\item[(A1)] $[x, y\leftarrow a]=[x, y]\leftarrow a+y(x\trl a)$,
\item[(A2)] $[x, a\rightarrow y]=a\rightarrow [x, y]+(x\trl a)y$,
\item[(A3)] $[x, \nu(a, b)]+x\trl(ab)=(x\trl a)\leftarrow b+a\rightarrow (x\trl b)$,
\item[(A4)] $(xy)\trl a=(x\trl a)y+x(y\trl a)$,
\item[(A5)] $(x\leftarrow b)\trl a=(x\trl a)\leftarrow b-x\theta(a, b)-x\leftarrow [a, b]$,
\item[(A6)] $(b\rightarrow x)\trl a=b\rightarrow (x\trl a)-\theta(a, b)x-[a, b]\rightarrow x$,
\item[(A7)] $[x, yz]=[x, y]z+y[x, z]$.
\end{enumerate}
\end{lemma}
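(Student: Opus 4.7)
The plan is to view this statement as a specialization of the cocycle cross product lemma proved earlier in Section 4, obtained by setting $\sigma=\omega=0$, $\ppr=\ppl=\trr=0$, and taking $H=V$ equipped with the Lie bracket and associative product implicit in the formulas. Under this specialization, the twelve compatibility conditions (CP1)--(CP12) should collapse precisely onto conditions (A1)--(A7), while the Agore--Militaru theory of extending structures for Lie algebras and associative algebras accounts separately for (A0).

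First I would separate the claim into three independent verifications: that $(E, [\,,\,])$ is a Lie algebra, that $(E, \cdot)$ is an associative algebra, and that the two together satisfy the Leibniz compatibility $[e_1, e_2 \cdot e_3] = [e_1, e_2]\cdot e_3 + e_2\cdot [e_1, e_3]$. The first two are classical extending problem results: they hold if and only if $(\trl, \theta)$ is a Lie extending system of $A$ through $V$ and $(\leftarrow, \rightarrow, \nu)$ is an associative algebra extending system of $A$ through $V$. Together these two pieces constitute (A0), and they also implicitly endow $V$ with its own bracket and product through the second coordinate of the formulas.

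For the Leibniz compatibility, I would expand both sides on generic elements $e_i = (a_i, x_i)$ and compare the $A$- and $V$-components. The $A$-component reduces to the Leibniz identity inside $A$ itself, which holds because $A$ is a noncommutative Poisson algebra. The $V$-component produces a long expression that I would group according to the input signature of each term: purely $V$--$V$--$V$ terms yield (A7); terms with two $V$-inputs and one $A$-input yield (A1), (A2), (A4); terms with one $V$-input and two $A$-inputs yield (A3), (A5), (A6). Matching coefficients term by term then produces exactly the seven identities.

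The main obstacle is combinatorial bookkeeping: the Leibniz identity on three generic elements generates many terms, and one must track carefully which bilinear map ($\trl$, $\leftarrow$, $\rightarrow$, $\theta$, $\nu$) appears in each in order to group them correctly. A useful cross-check I would run is to specialize the expansion already carried out in the proof of the cocycle cross product lemma to the present trivial data and verify by inspection that (CP1)--(CP12) reduce to (A1)--(A7); this would simultaneously confirm the bookkeeping and the consistency of the specialization, so no genuinely new computation beyond sorting terms is needed.
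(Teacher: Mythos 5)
Your proposal is correct and follows essentially the same route as the paper: the lemma is obtained precisely by specializing the cocycle cross product lemma of Section 4 (setting $\ppr=\ppl=\trr=0$ and $\sigma=\omega=0$), under which (CP1)--(CP6) trivialize, (CP7)--(CP12) become (A1)--(A6), and the residual cocycle identity on $V$ gives (A7), exactly as the paper remarks after the statement. The only cosmetic difference is that you also sketch the direct term-by-term expansion of the Leibniz identity, but your own cross-check reduces it back to the same specialization, so no new argument is involved.
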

Note that (A1)--(A6)  are deduced from (CP7)--(CP12) and by (A7)  we obtain that $V$ is a noncommutative Poisson algebra. Furthermore, $V$ is in fact a noncommutative Poisson subalgebra of $A_{}\#_{\theta, \nu}V$  but $A$ is not although $A$ is itself a noncommutative Poisson algebra.

Denote the set of all  algebraic extending datum of ${A}$ by $V$ of type (a1)  by $\mathcal{A}^{(1)}({A}, V)$.

%Note that $A_{}\#_{\theta}V$  is an algebra containing ${V}$ as a  subalgebra.
%In fact any algebraic structure on $E$ containing $A$  as subspace and ${V}$ as  subalgebra is isomorphic to such a unified product of this type.

In the following, we always assume that $A$ is a subspace of a vector space $E$, there exists a projection map $p: E \to{A}$ such that $p(a) = a$, for all $a \in {A}$.
Then the kernel space $V: = \ker(p)$ is also a subspace of $E$ and a complement of ${A}$ in $E$.

\begin{lemma}\label{lem:33-1}(\cite{AM5})
Let ${A}$ be a noncommutative Poisson algebra and $E$ a vector space containing ${A}$ as a subspace.
Suppose that there is a noncommutative Poisson algebra structure on $E$ such that $V$ is a noncommutative Poisson subalgebra of $E$
and the canonical projection map $p: E\to A$ is a noncommutative Poisson algebra homomorphism.
%and $p([x, a]_E)=0$ for all $x\in V, a\in A$.
Then there exists a noncommutative Poisson algebraic extending datum $\Omega^{(1)}({A}, V)$ of ${A}$ by $V$ such that
$E\cong A_{}\#_{\theta, \nu}V$.
\end{lemma}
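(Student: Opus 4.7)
}
The plan is to reconstruct the extending datum $\Omega^{(1)}(A,V)=(\trl,\theta,\leftarrow,\rightarrow,\nu)$ directly from the noncommutative Poisson algebra operations on $E$, and then show that the resulting identification $E\cong A\#_{\theta,\nu}V$ is an isomorphism of noncommutative Poisson algebras. The starting observation is that because $p\colon E\to A$ restricts to the identity on $A\subseteq E$ and $V=\ker(p)$, we have a vector space decomposition $E=A\oplus V$, and every element of $E$ can be written uniquely as $a+x$ with $a\in A$, $x\in V$.

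First I would extract the five structural maps. Since $p$ is a Poisson algebra morphism, for $a,b\in A$ both $[a,b]_E-[a,b]_A$ and $a\cdot_E b-a\cdot_A b$ lie in $V$; I define $\theta(a,b)\in V$ and $\nu(a,b)\in V$ to be these differences. For $a\in A$ and $x\in V$, applying $p$ shows $[x,a]_E,\,x\cdot_E a,\,a\cdot_E x\in V$, so I set
\begin{equation*}
x\trl a:=[x,a]_E,\qquad x\leftarrow a:=x\cdot_E a,\qquad a\rightarrow x:=a\cdot_E x,
\end{equation*}
all viewed as elements of $V$. Finally, since $V$ is a noncommutative Poisson subalgebra, $[x,y]_E$ and $x\cdot_E y$ already lie in $V$, which gives $V$ the structure of a noncommutative Poisson algebra and provides the ``diagonal'' pieces of the bracket and product used in the definition of $A\#_{\theta,\nu}V$.

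Next I would verify that $(\trl,\theta,\leftarrow,\rightarrow,\nu)$ satisfies the axioms (A0)--(A7). Condition (A0), which says that $(\trl,\theta)$ is a Lie extending system and $(\rightarrow,\leftarrow,\nu)$ is an associative extending system of $A$ through $V$, follows by expanding the Jacobi identity for $[,]_E$ and the associativity of $\cdot_E$ on all possible triples of arguments drawn from $A$ or $V$ and comparing the $A$- and $V$-components; this is precisely the content of the classical results of \cite{AM5} for Lie and associative algebras and can be invoked directly. The new compatibilities (A1)--(A7) are obtained by writing out the single Poisson identity $[u,vw]_E=[u,v]_E\cdot_E w+v\cdot_E[u,w]_E$ for the eight possible choices $u,v,w\in A\sqcup V$: for instance, taking $u=x\in V$, $v=y\in V$, $w=a\in A$ yields (A1); taking $u=x\in V$, $v=a\in A$, $w=y\in V$ yields (A2); taking $u=x\in V$, $v=a\in A$, $w=b\in A$ yields (A3) (after using $ab=a\cdot_A b+\nu(a,b)$ and $[x,a\cdot_A b]_E=x\trl(ab)$ on the left-hand side); the cases with $u\in A$ produce (A4)--(A6); and the case $u,v,w\in V$ gives (A7), the Poisson identity inside $V$. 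In each case one separates the result into its $A$- and $V$-components, observing that the $A$-part reproduces the Poisson identity of $A$ and the $V$-part is exactly the asserted compatibility.

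Finally, the map $\varphi\colon A\#_{\theta,\nu}V\to E$, $\varphi(a,x)=a+x$, is a linear isomorphism by construction, and the formulas chosen for $\theta,\nu,\trl,\leftarrow,\rightarrow$ together with the subalgebra structure on $V$ recover exactly the bracket and product of $A\#_{\theta,\nu}V$ from those of $E$; hence $\varphi$ is a noncommutative Poisson algebra isomorphism, as required. The main obstacle is purely bookkeeping: one must carefully track which terms belong to $A$ and which to $V$ in each of the eight expansions of the Poisson identity and of associativity, and be sure that the sign conventions (in particular the $-y\trl a$ term in the bracket of $A\#_{\theta,\nu}V$) match those produced by $[x,a]_E=x\trl a$; once a consistent convention is fixed, all the identities (A1)--(A7) drop out in parallel with the derivation of the axioms for the cocycle cross product given in the preceding section.
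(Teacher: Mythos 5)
Your proposal is correct and follows essentially the same route as the paper: you extract $\trl,\theta,\leftarrow,\rightarrow,\nu$ as the $V$-components of the operations of $E$ (your observation that $p([x,a]_E)=[p(x),a]=0$ shows these coincide with the paper's definitions $x\trl a:=[x,a]_E-p([x,a]_E)$, etc.), verify the axioms by splitting the Poisson identity and associativity into $A$- and $V$-components, and conclude with the same isomorphism $\varphi(a,x)=a+x$. The only difference is that you sketch the verification of (A0)--(A7), which the paper dismisses as ``easy to see.''
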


\begin{proof}
Since $V$ is a noncommutative Poisson subalgebra of $E$, we have $x\cdot_E y\in V$ for all $x, y\in V$.
We define the extending datum of ${A}$ through $V$ by the following formulas:
\begin{eqnarray*}
\trl: V\otimes {A} \to V, \qquad {x} \triangleleft {a} &: =&[{x}, {a}]_E-p([x, a]_E),\\
\theta: A\otimes A \to V, \qquad \theta(a, b) &: =&[a, b]_E-p \bigl([a, b]_E\bigl),\\
{[ , ]_V}: V \otimes V \to V, \qquad [x, y]_V&: =& [{x}, {y}]_E, \\
\leftarrow: V\otimes {A} \to V, \qquad {x} \leftarrow {a} &: =&{x}\cdot_E {a}-p({x}\cdot_E {a}),\\
\rightarrow: A\otimes {V} \to V, \qquad {a} \rightarrow {x} &: =&{a}\cdot_E {x}-p({a}\cdot_E {x}),\\
\nu: A\otimes A \to V, \qquad \nu(a, b) &: =&a\cdot_E b-p \bigl(a\cdot_E b\bigl),\\
{\cdot_V}: V \otimes V \to V, \qquad {x}\cdot_V {y}&: =& {x}\cdot_E{y},
\end{eqnarray*}
for any $a , b\in {A}$ and $x, y\in V$. It is easy to see that the above maps are  well defined and
$\Omega^{(1)}({A}, V)$ is an extending system of
${A}$ trough $V$ and
\begin{eqnarray*}
\varphi:A_{}\#_{\theta, \nu}V\to E, \qquad \varphi(a, x): = a+x
\end{eqnarray*}
is an isomorphism of noncommutative Poisson algebras.
\end{proof}

\begin{lemma}\label{deform-01}
Let $\Omega^{(1)}(A, V) = \bigl(\leftarrow, \rightarrow,  \trl, \theta, \nu, \cdot, [ , ] \bigl)$ and $\Omega'^{(1)}(A, V) = \bigl(\leftarrow ', \rightarrow', \trl', \theta', \nu', \cdot ' , [ ,]'\bigl)$
be two algebraic extending datums of ${A}$ by $V$ of type (a1) and $A_{}\#_{\theta, \nu} V$, $A_{}\#_{\theta', \nu'} V$ be the corresponding unified products. Then there exists a bijection between the set of all homomorphisms of noncommutative Poisson algebras $\varphi: A_{\theta,\nu}\#_{\leftarrow, \rightarrow, \trl} V\to A_{\theta', \nu'}\#_{\leftarrow', \rightarrow', \trl'} V$ whose restriction on ${A}$ is the identity map and the set of pairs $(r, s)$, where $r: V\rightarrow {A}$ and $s: V\rightarrow V$ are two linear maps satisfying
\begin{eqnarray*}
&&{r}(x\trl a)=[{r}(x), a],\\
&&[a, b]'=[a, b]+r\theta(a, b),\\
&&{r}([x, y])=[{r}(x), {r}(y)]',\\
&&{s}(x)\trl' a+\theta'(r(x), a)={s}(x\trl a),\\
&&\theta'(a, b)=s\theta(a, b),\\
&&{s}([x, y])=[{s}(x), {s}(y)]'+{s}(x)\trl'{r}(y)-{s}(y)\trl'{r}(x)+\theta'(r(x), r(y)),\\
&&{r}(x\leftarrow a)={r}(x)\cdot' a,\\
&&{r}(a\rightarrow y)=a\cdot' {r}(x),\\
&&a\cdot' b=ab+r\nu(a, b),\\
&&{r}(xy)={r}(x)\cdot' {r}(y),\\
&&{s}(x)\leftarrow' a+\nu'(r(x), a)={s}(x\leftarrow a),\\
&&a\rightarrow' {s}(x)+\nu'(a, r(x))={s}(a\rightarrow x),\\
&&\nu'(a,b)=s\nu(a, b),\\
&&{s}(xy)={s}(x)\cdot' {s}(y)+{s}(x)\leftarrow'{r}(y)+{r}(x)\rightarrow' {s}(y)+\nu'(r(x), r(y)),
\end{eqnarray*}
for all $a, b\in{A}$ and $x$, $y\in V$.

Under the above bijection the homomorphism of noncommutative Poisson algebras $\varphi=\varphi_{r, s}: A_{}\#_{\theta, \nu}V\to A_{}\#_{\theta', \nu'} V$ to $(r, s)$ is given  by $\varphi(a, x)=(a+r(x), s(x))$ for all $a\in {A}$ and $x\in V$. Moreover, $\varphi=\varphi_{r, s}$ is an isomorphism if and only if $s: V\rightarrow V$ is a linear isomorphism.
\end{lemma}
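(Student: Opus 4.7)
\medskip

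\textbf{Proof proposal.}
The strategy is to reduce the assertion to a direct translation between the $A$-plus-$V$ decomposition of a candidate map $\varphi$ and the axioms of a noncommutative Poisson algebra homomorphism on the unified products. First I would observe that any linear map $\varphi\colon A\#_{\theta,\nu}V\to A\#_{\theta',\nu'}V$ whose restriction to $A$ is the identity must have the form $\varphi(a,x)=(a+r(x),\,s(x))$ for uniquely determined linear maps $r\colon V\to A$ and $s\colon V\to V$: write $\varphi(0,x)=(r(x),s(x))$ via the direct sum decomposition $E=A\oplus V$, then use linearity and $\varphi(a,0)=(a,0)$. Conversely, every such pair $(r,s)$ determines a linear map $\varphi_{r,s}$. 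Thus the set of candidate linear maps is parametrised by pairs $(r,s)$, and what remains is to pin down exactly which pairs give homomorphisms.

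Next I would split the two multiplicativity requirements, $\varphi[\,\cdot\,,\,\cdot\,]_E=[\varphi(\cdot),\varphi(\cdot)]'_E$ and $\varphi(\,\cdot\,\cdot_E\,\cdot\,)=\varphi(\cdot)\cdot'_E\varphi(\cdot)$, into the four cases according to whether each argument lies in $A\oplus 0$ or in $0\oplus V$. Using the explicit formulas for the unified product
\[
(a,x)\cdot(b,y)=(ab,\;xy+x\leftarrow b+a\rightarrow y+\nu(a,b)),\qquad [(a,x),(b,y)]=([a,b],\;[x,y]+x\trl b-y\trl a+\theta(a,b)),
\]
I would expand both sides in each of the eight resulting sub-identities and then equate the $A$-components and the $V$-components separately. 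The case $(a,0)\star(b,0)$ yields the two relations involving $\theta,\nu$ and the maps $r,s$ applied to them (namely $[a,b]'=[a,b]+r\theta(a,b)$, $\theta'(a,b)=s\theta(a,b)$, and the corresponding pair for $\cdot'$ and $\nu'$). The mixed cases $(a,0)\star(0,x)$ and $(0,x)\star(a,0)$ produce the six equations linking $r$ and $s$ to the actions $\leftarrow,\rightarrow,\trl$; and the case $(0,x)\star(0,y)$ gives the two equations describing how $r$ and $s$ intertwine the internal products and brackets of $V$ with those transported to $V$ via the primed extending datum together with the cocycles $\theta',\nu'$ evaluated at $r(x),r(y)$.

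A routine bookkeeping then shows that the full list of fourteen displayed identities in the statement is equivalent to $\varphi_{r,s}$ being simultaneously a Lie algebra homomorphism and an associative algebra homomorphism, hence a noncommutative Poisson algebra homomorphism. This correspondence is evidently a bijection between the pairs $(r,s)$ and the homomorphisms fixing $A$. The main obstacle is not conceptual but combinatorial: one has to carry through eight separate expansions and match every $A$-component and every $V$-component without miscounting sign conventions on $\trl$, $\leftarrow$, $\rightarrow$, so I would organise the verification in a single table indexed by the four pairs of argument types, crossed with the two operations $\cdot$ and $[,]$.

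Finally, to prove that $\varphi_{r,s}$ is an isomorphism precisely when $s$ is a linear isomorphism, I would argue by the block-triangular structure of $\varphi_{r,s}$ with respect to $E=A\oplus V$: the restriction to $A$ is $\id_A$ (an isomorphism), and the induced map on the quotient $E/A\cong V$ is exactly $s$. Hence $\varphi_{r,s}$ is a bijection iff $s$ is. If $s$ is a linear isomorphism, the inverse is given explicitly by $\varphi_{r,s}^{-1}(a,x)=(a-r(s^{-1}(x)),\,s^{-1}(x))$, which one checks is again of the form $\varphi_{r',s'}$ for $r'=-r\circ s^{-1}$, $s'=s^{-1}$; by the first part it is automatically a noncommutative Poisson algebra homomorphism (from the primed to the unprimed unified product), completing the proof.
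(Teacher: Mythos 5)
Your proposal is correct and follows essentially the same route as the paper: write $\varphi(a,x)=(a+r(x),s(x))$, expand the two homomorphism conditions for the bracket and the associative product against the explicit unified-product formulas, and read off the listed identities by comparing $A$- and $V$-components (the paper does this with general arguments $(a,x),(b,y)$ in one expansion rather than splitting into the four argument-type cases, but by bilinearity this is the same computation). Your block-triangular argument for the isomorphism criterion is also the standard one and is in fact slightly more explicit than what the paper records.
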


\begin{proof}
Let $\varphi: A_{}\#_{\theta, \nu}V\to A_{}\#_{\theta', \nu'} V$  be a noncommutative Poisson algebra homomorphism  whose restriction on ${A}$ is the identity map. Then $\varphi$ is determined by two linear maps $r: V\rightarrow {A}$ and $s: V\rightarrow V$ such that
$\varphi(a, x)=(a+r(x), s(x))$ for all $a\in {A}$ and $x\in V$.
In fact, we have to show
$$\varphi([(a, x), (b, y)])=[\varphi(a, x), \varphi(b, y)]',$$
$$\varphi((a, x)(b, y))=\varphi(a, x)\cdot'\varphi(b, y).$$
For the first equation, the left hand side is equal to
\begin{eqnarray*}
&&\varphi([(a, x), (b, y)])\\
&=&\varphi\left([a, b], \,  x\trl b-y\trl a+[x, y]+\theta(a, b)\right)\\
&=&\big([a, b]+ r(x\trl b)-r(y\trl a)+r([x, y])+r\theta(a, b),\\
&&\qquad\quad s(x\trl b)-s(y\trl a)+s([x, y])+s\theta(a, b)\big),
\end{eqnarray*}
and the right hand side is equal to
\begin{eqnarray*}
&&[\varphi(a, x), \varphi(b, y)]'\\
&=&[(a+r(x), s(x)),  (b+r(y), s(y))]'\\
&=&\big([a+r(x), b+r(y)]',  s(x)\trl'(b+r(y))-s(y)\trl'(a+r(x))\\
&&\qquad\qquad +[s(x), s(y)]'+\theta'(a+r(x), b+r(y))\big).
\end{eqnarray*}
For the second equation, the left hand side is equal to
\begin{eqnarray*}
&&\varphi((a, x)(b, y))\\
&=&\varphi\left({ab},\,  x\leftarrow b+a\rightarrow y+{xy}+\nu(a, b)\right)\\
&=&\big({ab}+ r(x\leftarrow b)+r(a\rightarrow y)+r({xy})+r\nu(a, b),\\
&&\qquad\quad s(x\leftarrow b)+s(a\rightarrow y)+s({xy})+s\nu(a, b)\big),
\end{eqnarray*}
and the right hand side is equal to
\begin{eqnarray*}
&&\varphi(a, x)\cdot' \varphi(b, y)\\
&=&(a+r(x), s(x))\cdot'  (b+r(y), s(y))\\
&=&\big((a+r(x))\cdot' (b+r(y)),  s(x)\leftarrow'(b+r(y))+(a+r(x))\rightarrow' s(y)\\
&&\qquad\qquad +s(x)\cdot' s(y)+\nu'(a+r(x), b+r(y))\big).
\end{eqnarray*}
Thus $\varphi$ is a homomorphism of noncommutayive Poisson algebras if and only if the above conditions hold.
\end{proof}

The second case is when $\theta=0, \nu=0$,  we obtain the following type (a2)  unified product.

%
%\begin{definition}
%Let $A$ be an algebra and $V$ a  vector space. An \textit{extending
%datum of $A$ through $V$} is a system $\Omega(A, V) =
%\bigl(\triangleleft, \, \triangleright, \, \leftharpoonup, \,
%\rightharpoonup, \, \sigma, \, \cdot \bigl)$ consisting of six bilinear
%maps
%\begin{eqnarray*}
%&&\ppr: V \otimes A \to A, \quad \ppl: A \otimes V \to A, \quad\triangleleft : V \otimes A \to V, \quad \trr: A \otimes V \to V \\
%&& \sigma: V\otimes V \to A, \quad \cdot \, : V\otimes V \to V
%\end{eqnarray*}
%
%Let $\Omega(A, V) = \bigl(\triangleleft, \, \triangleright, \,
%\leftharpoonup, \, \rightharpoonup, \, \sigma, \, \cdot \bigl)$ be an
%extending datum.  We denote by $ A \, \ltimes_{\Omega(A, V)} V = A
%\, \ltimes V$ the  vector space $A\oplus V$ together with the multiplication
%\begin{align}
%(a, x)(b, y)=\big(ab+x\ppr b+a\ppl y+\sigma(x, y), \, xy+x\trl b+a\trr y\big).
%\end{align}
% for all $x$, $y \in A$ and $u$, $v \in V$.
%
%The object $A\ltimes V$
%is called the \textit{unified product} of $A$ and $\Omega(A, V)$ if it is an algebra with the multiplication given by above equation. In this case the
%extending datum $\Omega(A, V) = \bigl(\triangleleft, \,\triangleright, \, \leftharpoonup, \, \rightharpoonup, \, \sigma, \,
%\cdot \bigl)$ is called an \textit{algebraic extending structure} of $A$ through $V$.
%\end{definition}

\begin{theorem}(\cite{AM5})
Let ${A}$ be a noncommutative Poisson algebra and $V$ a vector space. An extending datum of ${A}$ through $V$ of type (a1)  is  $\Omega^{(2)}({A}, V)$ consisting of bilinear maps
\begin{eqnarray*}
&& \triangleleft: V \otimes A \to V, \quad \triangleright: V
\otimes A \to A, \quad \sigma: V\otimes V \to A , \quad\rightharpoonup: V\otimes A \to A ,\\
&&\leftharpoonup: A \otimes V \to A , \quad\rightarrow: A\otimes V \to V ,
\quad\leftarrow: V\otimes A \to V ,\quad \omega: V\otimes V \to A.
\end{eqnarray*}
Denote by $A_{\sigma, \omega}\#_{}V$ the vector space $E={A}\oplus V$ together with the multiplication given by
\begin{eqnarray}
\left [(a, x), \, (b, y) \right ] &: =& \bigl( [a, \, b] + x
\trr b - y \trr a + \sigma (x, y), \,\,
[x, y]+x\trl b - y \trl a \bigl),\\
(a, \, x) \cdot (b, \, y) &: =& \bigl( ab + x \ppr b +
a\ppl y + \omega(x, y), \,\, xy+x\leftarrow b + a\rightarrow y  \bigl). \label{multunifJ}
\label{brackunifJ}
\end{eqnarray}
Then $A_{\sigma, \omega}\# {}_{}V$ is a noncommutative Poisson algebra if and only if the following compatibility conditions hold for all $a$, $b\in {A}$, $x$, $y$, $z\in V$:
\begin{enumerate}
\item[(B0)] $\bigl(\ppr, \, \ppl, \, \rightarrow, \, \leftarrow, \, \omega)$ is an algebra extending system of the associative algebra
$A$ trough $V$ and $\bigl(\trr, \, \trl, \,
\sigma \bigl)$ is a Lie extending system of the
Lie algebra $A$ trough $V$,
\item[(B1)] $[a, x\ppr b]-(x\leftarrow b)\trr a=x\ppr[a, b]-(x\trr a)b-(x\trl a)\ppr b$,
\item[(B2)] $[a, b\ppl x]-(b\rightarrow x)\trr a=[a, b]\ppl x-b(x\trr a)-b\ppl (x\trl a)$,
\item[(B3)] $(xy)\trr a-[a, \omega(x, y)]=(x\trr a)\ppl y+\omega(x\trl a, y)+x\ppr(y\trr a)+\omega(x, y\trl a)$,
\item[(B4)] $x\trr(ab)=(x\trr a)b+(x\trl a)\ppr b+a(x\trr b)+a\ppl (x\trl b)$,
\item[(B5)] $x\trr(y\ppr a)+\sigma(x, y\leftarrow a)=\sigma(x, y)a+[x, y]\ppr a+y\ppr(x\trr a)+\omega(y, x\trl a) $,
\item[(B6)] $x\trr(a\ppl y)+\sigma(x, a\rightarrow y)=a\sigma(x, y)+a\ppl [x, y]+(x\trr a)\ppl y+\omega(x\trl a, y) $,
\item[(B7)] $[x, y\leftarrow a]+x\trl(y\ppr a)=[x, y]\leftarrow a+y(x\trl a)+y\leftarrow(x\trr a)$,
\item[(B8)] $[x, a\rightarrow y]+x\trl(a\ppl y)=a\rightarrow [x, y]+(x\trl a)y+(x\trr a)\rightarrow y$,
\item[(B9)] $x\trl(ab)=(x\trl a)\leftarrow b+a\rightarrow (x\trl b)$,
\item[(B10)] $(xy)\trl a=(x\trl a)y+(x\trr a)\rightarrow y+x(y\trl a)+x\leftarrow (y\trr a)$,
\item[(B11)] $(x\leftarrow b)\trl a=(x\trl a)\leftarrow b-x\leftarrow [a, b]$,
\item[(B12)] $(b\rightarrow x)\trl a=b\rightarrow (x\trl a)-[a, b]\rightarrow x$,
\item[(B13)] $[x, yz]=[x, y]z+y[x, z]+\sigma(x, y)\rightarrow z+y\leftarrow \sigma(x, z)-x\trl \omega(y, z)$.
\end{enumerate}
\end{theorem}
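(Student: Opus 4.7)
The plan is to verify the three defining axioms of a noncommutative Poisson algebra for $E:=A\oplus V$ with the bracket $[\cdot,\cdot]_E$ and product $\cdot_E$ given in the statement: $(E,[\cdot,\cdot]_E)$ is a Lie algebra, $(E,\cdot_E)$ is an associative algebra, and the Leibniz rule
\[
[e_1,\,e_2\cdot_E e_3]_E = [e_1,e_2]_E\cdot_E e_3 + e_2\cdot_E [e_1,e_3]_E
\]
holds. This is the associative/Lie specialization of the cocycle cross-product structure that was already handled in the proof of Theorem~\ref{main2} (taking all comultiplications and comodule maps to be zero), so no new ideas beyond an organized case analysis are required.

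First I would observe the block structure: the Lie bracket $[\cdot,\cdot]_E$ is precisely the Lie-algebraic cocycle cross product attached to the datum $(\trr,\trl,\sigma)$, and the product $\cdot_E$ is the associative cocycle cross product attached to $(\ppr,\ppl,\rightarrow,\leftarrow,\omega)$. Invoking the classical Agore--Militaru extending-structure theorems in the Lie and associative categories separately, the statements that $(E,[\cdot,\cdot]_E)$ is a Lie algebra and $(E,\cdot_E)$ is an associative algebra are together equivalent to condition~(B0).

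Next I would check the Leibniz rule. By bilinearity it suffices to test it on generators of the form $(a,0)$ and $(0,x)$, producing eight cases. The triple $((a,0),(b,0),(c,0))$ reduces to the Leibniz rule on $A$ itself, which holds by hypothesis. For each of the other seven cases, one expands both sides using the defining formulas and then separates the resulting equality into its $A$- and $V$-components. For example, $((a,0),(0,x),(b,0))$ produces (B1) in the $A$-slot and (B7) in the $V$-slot; $((a,0),(b,0),(0,x))$ gives (B2) and (B8); the mixed-two-$V$ cases produce (B4), (B9)--(B12); and the all-$V$ case $((0,x),(0,y),(0,z))$ yields (B13) in the $V$-component and (B3), (B5), (B6) in the $A$-component, according to which argument plays the outer role.

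The main obstacle is bookkeeping rather than conceptual: because $\sigma(x,y),\omega(x,y)\in A$, these cocycle terms re-enter the next operation through the left/right actions $\trr,\trl,\ppr,\ppl,\rightarrow,\leftarrow$ and through the product of $A$, so a single Leibniz test produces many cross terms. The identities (B3), (B5), (B6) and (B11)--(B13) are precisely the compatibility constraints that remain after the generic terms cancel, so organizing the expansion so these cocycle residues are visible is the crux. One could shorten the whole verification by directly quoting the computation in the proof of Theorem~\ref{main2} and setting $\theta=\nu=0$ together with all coalgebra data equal to zero, in which case the conditions (CP1)--(CP12) collapse exactly to (B1)--(B12), and (CC5) collapses to (B13).
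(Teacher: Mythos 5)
Your overall strategy is the right one and is essentially the paper's: the theorem is the specialization $\theta=\nu=0$ of the general cocycle cross product lemma, whose proof expands the Leibniz identity $[(a,x),(b,y)\cdot_E(c,z)]_E=[(a,x),(b,y)]_E\cdot_E(c,z)+(b,y)\cdot_E[(a,x),(c,z)]_E$ and reads off the conditions, with the Lie and associative axioms handled separately as in (B0). Either of your two routes (direct eight-case expansion, or quoting that lemma and setting $\theta=\nu=0$ and all coalgebra data to zero) works.

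However, your bookkeeping of which triple produces which condition is off in a way you should fix before writing it up. The identities (B1), (B2), (B4), (B9), (B11), (B12) all involve one $V$-element and two $A$-elements, so they arise from the triples with a single $(0,x)$ entry (e.g.\ $((a,0),(0,x),(b,0))$ gives (B1) in the $A$-slot and (B11) in the $V$-slot, not (B7)); the identities (B3), (B5), (B6), (B7), (B8), (B10) involve two $V$-elements and one $A$-element and come from the triples with two $(0,x)$ entries. In particular (B3), (B5), (B6) do \emph{not} come from the all-$V$ case as you claim. The all-$V$ triple $((0,x),(0,y),(0,z))$ yields (B13) in the $V$-component, and in the $A$-component it yields the mixed cocycle identity
$x\trr\omega(y,z)+\sigma(x,yz)=\sigma(x,y)\ppl z+\omega([x,y],z)+y\ppr\sigma(x,z)+\omega(y,[x,z])$,
i.e.\ the specialization of (CC1). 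This condition is not one of (B1)--(B13); in the paper's framework it is part of the standing hypothesis that $V$ carries a cocycle $(\sigma,\omega)$-structure (just as (B13) is the specialization of (CC5)), so your reduction must either record it explicitly or absorb it into the hypotheses alongside (B0) --- otherwise the ``if and only if'' you establish will have one more condition than the list you are matching against.
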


\begin{theorem}(\cite{AM5})
Let $A$ be a noncommutative Poisson algebra, $E$ a  vector space containing $A$ as a subspace.
If there is a noncommutative Poisson algebra structure on $E$ such that $A$ is a noncommutative Poisson subalgebra of $E$. Then there exists a noncommutative Poisson algebraic extending structure $\Omega(A, V) = \bigl(\triangleleft, \, \triangleright, \,
\leftharpoonup, \, \rightharpoonup, \, \leftarrow, \,\rightarrow, \,\sigma, \,\omega \bigl)$ of $A$ through $V$ such that there is
an isomorphism of noncommutative Poisson algebras $E\cong A_{\sigma, \omega}\#_{}V$.
\end{theorem}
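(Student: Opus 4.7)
The strategy is parallel to Lemma~\ref{lem:33-1}: I would reconstruct all eight structure maps of the type $(a2)$ extending system directly from the given noncommutative Poisson algebra structure on $E$, using a fixed linear retraction onto $A$. Since $A$ is a subspace of $E$, choose a linear projection $p : E \to A$ with $p|_A = \id_A$ and let $V := \ker p$, so that $E = A \oplus V$ as vector spaces.

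Using the bracket $[\,,\,]_E$ and the associative product $\cdot_E$ of $E$, I define the eight operations componentwise, splitting each product of a ``pure'' element of $A$ with a ``pure'' element of $V$ (or of two elements of $V$) into its $A$-component and its $V$-component via $p$. Explicitly,
\begin{align*}
x \trr a &:= p([x,a]_E), & x \trl a &:= [x,a]_E - p([x,a]_E), & \sigma(x,y) &:= p([x,y]_E),\\
x \ppr a &:= p(x \cdot_E a), & x \leftarrow a &:= x \cdot_E a - p(x \cdot_E a),\\
a \ppl x &:= p(a \cdot_E x), & a \rightarrow x &:= a \cdot_E x - p(a \cdot_E x), & \omega(x,y) &:= p(x \cdot_E y),
\end{align*}
together with $[x,y]_V := [x,y]_E - p([x,y]_E)$ and $x \cdot_V y := x \cdot_E y - p(x \cdot_E y)$ for $a,b \in A$ and $x,y \in V$. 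Because $A$ is a noncommutative Poisson subalgebra, we have $[a,b]_E = [a,b]_A \in A$ and $a \cdot_E b = ab \in A$, so no ``$A$-by-$A$'' cocycle appears and we are indeed in the type $(a2)$ situation (that is, $\theta = 0$ and $\nu = 0$).

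Next I would verify that $\Omega^{(2)}(A,V) = (\trl, \trr, \ppl, \ppr, \leftarrow, \rightarrow, \sigma, \omega)$ is a noncommutative Poisson algebraic extending structure, i.e.\ that the axioms (B0)--(B13) hold. Each axiom is obtained by starting from a single instance of the Jacobi identity, associativity, or Leibniz rule in $E$ applied to a triple drawn from $A \cup V$, then applying $p$ and $\id - p$ to project onto $A$ and $V$ respectively. For example, writing the Leibniz identity $[a, x \cdot_E b]_E = [a,x]_E \cdot_E b + x \cdot_E [a,b]_E$ in $E$ and splitting both sides through $p$ yields axioms (B1); similarly $(xy)\trr a$-type identities come from the mixed Leibniz rule with two $V$-entries. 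The verification is lengthy but entirely mechanical, one axiom per Jacobi/associativity/Leibniz instance, so I would only spell out one or two representative cases and indicate that the remaining ones are analogous.

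Finally, the map
\[
\varphi : A_{\sigma,\omega}\#\, V \longrightarrow E, \qquad \varphi(a,x) := a + x,
\]
is clearly a linear isomorphism with inverse $e \mapsto (p(e),\, e - p(e))$. Using the definitions above, one checks directly that $\varphi$ transports the bracket and product of $A_{\sigma,\omega}\#\,V$ (as in the theorem statement) to $[\,,\,]_E$ and $\cdot_E$, so $\varphi$ is an isomorphism of noncommutative Poisson algebras. The main obstacle is purely bookkeeping: there is no conceptual difficulty, but one must keep the eight operations consistent and track which summand each term lands in under $p$; the Leibniz compatibility (B13), which mixes all three cocycles $\sigma$, $\omega$, and $\trl$, is the axiom most prone to sign and placement errors and would deserve the most careful verification.
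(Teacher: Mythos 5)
Your proposal is correct and follows essentially the same route as the paper's proof of the analogous Lemma~\ref{lem:33-1}: split each product through the projection $p$ to recover the eight structure maps, observe that $A$ being a subalgebra forces $\theta=\nu=0$, and check that $\varphi(a,x)=a+x$ matches the unified product with the structure on $E$. One small simplification you could invoke: once $\varphi$ is seen to be a linear bijection transporting the products, the axioms (B0)--(B13) hold automatically because they are equivalent (by the preceding theorem) to $A_{\sigma,\omega}\#\,V$ being a noncommutative Poisson algebra, which it is as the image of $E$ under $\varphi^{-1}$; no case-by-case verification is needed.
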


\begin{lemma}%(\cite{AM7})
Let $\Omega^{(1)}(A, V) = \bigl(\trr, \trl, \leftharpoonup,  \rightharpoonup, \leftarrow, \rightarrow, \sigma,  \omega,  \cdot , [,]\bigl)$ and
$\Omega'^{(1)}(A, V) = \bigl(\trr', \trl ', \leftharpoonup ',  \rightharpoonup ', \leftarrow', \rightarrow', \sigma ', \omega', \cdot ', [,]' \bigl)$
be two noncommutative Poisson  algebraic extending structures of $A$ through $V$ and $A{}_{\sigma, \omega}\#_{}V$, $A{}_{\sigma', \omega'}\#_{}  V$ the  associated unified
products. Then there exists a bijection between the set of all
homomorphisms of algebras $\psi: A{}_{\sigma, \omega}\#_{}V\to A{}_{\sigma', \omega'}\#_{}  V$which
stabilize $A$ and the set of pairs $(r, s)$, where $r: V \to
A$, $s: V \to V$ are linear maps satisfying the following
compatibility conditions for any $a, b \in A$, $x$, $y \in V$:
\begin{eqnarray*}
&&r([x, y]) = [r(x), r(y)]' + \sigma ' (s(x), s(y)) - \sigma(x, y) + s(x) \trr' r(y) - s(y) \trr' r(x),\\
&&s([x, y]) = s(x) \trl ' r(y) - s(y)\trl ' r(x) + [s(x), s(y)]',\\
&&r(x\trl  {a}) = [r(x), {a}]' + s(x) \trr' {a} -x\trr a,\\
&&s(x\trl {a}) = s(x)\trl' {a},\\
&&r(x \cdot y) = r(x)\cdot'r(y) + \omega' (s(x), s(y)) - \omega(x, y) + s(x) \ppr' r(y) +  r(x)\ppl' s(y),\\
&&s(x \cdot y) = r(x) \rightarrow' s(y)+ s(x)\leftarrow ' r(y) + s(x) \cdot ' s(y),\\
&&r(x\leftarrow  {b}) = r(x)\cdot' {b} - x \ppr {b} + s(x) \ppr' {b},\\
&&r(a\rightarrow y) = a\cdot' r(y) - a \ppl {y} + a\ppl' s(y),\\
&&s(x\leftarrow {b}) = s(x)\leftarrow' {b},\\
&&s(a\rightarrow {y}) = a\rightarrow' s(y).
\end{eqnarray*}
Under the above bijection the homomorphism of algebras $\varphi =\varphi _{(r, s)}: A_{\sigma, \omega}\# {}_{}V \to A_{\sigma', \omega'}\# {}_{}V$ corresponding to
$(r, s)$ is given for any $a\in A$ and $x \in V$ by:
$$\varphi(a, x) = (a + r(x), s(x)).$$
Moreover, $\varphi  = \varphi _{(r, s)}$ is an isomorphism if and only if $s: V \to V$ is an isomorphism linear map.
\end{lemma}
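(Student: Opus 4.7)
The plan is to mimic the argument used for the type $(a1)$ case in Lemma \ref{deform-01}, but now accommodate the full extending datum $\Omega^{(2)}(A,V)$ in which $A$ is genuinely a Poisson subalgebra of $E$. First I would observe that any linear map $\psi\colon A{}_{\sigma,\omega}\#_{}V\to A{}_{\sigma',\omega'}\#_{}V$ which stabilizes $A$ must send $(a,0)\mapsto (a,0)$, so it is completely determined by its action on $V\cong\{0\}\oplus V$. Writing $\psi(0,x)=(r(x),s(x))$ with $r\colon V\to A$ and $s\colon V\to V$, linearity forces the form $\psi(a,x)=(a+r(x),s(x))$ for all $a\in A$ and $x\in V$. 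Conversely, any pair $(r,s)$ defines such a linear map, so the problem reduces to determining when $\psi_{r,s}$ respects both the bracket and the multiplication of the two unified products.

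Next, I would impose the two structural compatibilities
\[
\psi\bigl([(a,x),(b,y)]\bigr)=\bigl[\psi(a,x),\psi(b,y)\bigr]',\qquad \psi\bigl((a,x)\cdot(b,y)\bigr)=\psi(a,x)\cdot'\psi(b,y),
\]
and expand each side using the defining formulas for $A{}_{\sigma,\omega}\#_{}V$. The left-hand sides produce, via $r$ and $s$, $A$- and $V$-components built out of $\trl,\trr,\leftarrow,\rightarrow,\ppl,\ppr,\sigma,\omega$; the right-hand sides produce analogous components built out of the primed operations, but with the arguments shifted from $(a,b)$ to $(a+r(x),b+r(y))$. Specializing successively to $(a,0)\,\&\,(b,0)$, $(a,0)\,\&\,(0,y)$, $(0,x)\,\&\,(b,0)$ and $(0,x)\,\&\,(0,y)$, and then projecting the equations onto the $A$-summand and the $V$-summand, I will obtain exactly the ten listed identities: the first two (with $a=b=0$) from the bracket equation; the third and fourth (mixed input) from the bracket equation; the fifth and sixth (pure $V$-inputs) from the multiplication equation; and the last four (mixed inputs) from the multiplication equation. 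Since the input specialization is exhaustive and the resulting system is equivalent to the original compatibilities, this gives the claimed bijection.

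Finally, for the isomorphism statement I would argue that $\psi_{r,s}$ is bijective iff it is invertible as a linear map on $A\oplus V$. Since it stabilizes $A$ and induces the identity on the quotient $E/A$ only up to the endomorphism $s$ of $V$, a direct triangular-matrix argument shows $\psi_{r,s}$ is invertible iff $s$ is invertible; in that case the inverse has the same shape $\psi_{-r\circ s^{-1},\,s^{-1}}$ and automatically satisfies the mirror image of the ten compatibility conditions, hence is itself a Poisson algebra homomorphism.

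The main obstacle is purely bookkeeping: the number of operations is large, and one must be careful that the ten identities obtained by projecting the two compatibilities onto the four input-types and the two output-summands are truly independent and that no further identity is generated. In particular, the mixed identities involving $\trr'$ and $\ppr',\ppl'$ evaluated at $(r(x),a)$ must be matched carefully against the un-primed $\trr,\ppr,\ppl$ evaluated at $(x,a)$, and the error terms $s(x)\trr'a-x\trr a$ and $s(x)\ppr'b-x\ppr b$ appearing in the formulas for $r(x\trl a)$ and $r(x\leftarrow b)$ must be tracked without confusion. Once this organizational issue is handled, each identity falls out by inspection.
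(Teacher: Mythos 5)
Your proposal is correct and follows essentially the same route as the paper: the paper proves the type (a1) analogue (Lemma on $A\#_{\theta,\nu}V$) by writing $\varphi(a,x)=(a+r(x),s(x))$, expanding the bracket and multiplication compatibilities on both sides, and matching $A$- and $V$-components, and then explicitly omits the details for this lemma as "similar." Your specialization to pure-$A$, pure-$V$, and mixed input pairs is just an organized way of doing that same component-matching, and your triangular-matrix argument for the isomorphism criterion is the standard one the paper also uses.
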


The proof of the above is similar as to the proof of Lemma \ref{deform-01}, so we omit the details.

Let ${A}$ be a noncommutative Poisson algebra and $V$ a vector space. Two algebraic extending systems $\Omega^{(i)}({A}, V)$ and ${\Omega'^{(i)}}({A}, V)$  are called equivalent if $\varphi_{r, s}$ is an isomorphism.  We denote it by $\Omega^{(i)}({A}, V)\equiv{\Omega'^{(i)}}({A}, V)$.
From the above lemmas, we obtain the following result.

\begin{theorem}\label{thm3-1}
Let ${A}$ be a noncommutative Poisson algebra, $E$ a vector space containing ${A}$ as a subspace and
$V$ be a complement of ${A}$ in $E$.
Denote $\mathcal{HA}(V, {A}): =\mathcal{A}^{(1)}({A}, V)\sqcup \mathcal{A}^{(2)}({A}, V) /\equiv$. Then the map
\begin{eqnarray}
\notag&&\Psi: \mathcal{HA}(V, {A})\rightarrow Extd(E, {A}),\\
&&\overline{\Omega^{(1)}({A}, V)}\mapsto A_{}\#_{\theta, \nu} V, \quad \overline{\Omega^{(2)}({A}, V)}\mapsto A_{\sigma, \omega}\# {}_{} V
\end{eqnarray}
is bijective, where $\overline{\Omega^{(i)}({A}, V)}$ is the equivalence class of
$\Omega^{(i)}({A}, V)$ under $\equiv$.
\end{theorem}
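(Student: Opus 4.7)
The plan is to assemble the bijection $\Psi$ from the two realization lemmas (one for each type, namely Lemma \ref{lem:33-1} and its type-(a2) counterpart) together with the two deformation lemmas (Lemma \ref{deform-01} and its type-(a2) analog). I would verify well-definedness, surjectivity, and injectivity in turn, since the heavy calculational content has already been packaged into those earlier results.

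First, for well-definedness: if two datums $\Omega^{(i)}(A,V)$ and $\Omega'^{(i)}(A,V)$ are equivalent, the witnessing pair $(r,s)$ with $s$ invertible yields, by Lemma \ref{deform-01} (or its type-(a2) version), an isomorphism $\varphi_{r,s}\colon (a,x)\mapsto (a+r(x),s(x))$ of noncommutative Poisson algebras between the associated unified products. Since $\varphi_{r,s}(a,0)=(a,0)$ this isomorphism stabilizes $A$, so the two unified products define the same class in $Extd(E,A)$ and $\Psi$ descends to the quotient $\mathcal{A}^{(1)}(A,V)\sqcup\mathcal{A}^{(2)}(A,V)/\!\equiv$.

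Next, for surjectivity: given any noncommutative Poisson algebra structure on $E$ in $Extd(E,A)$, the definition of an extending system forces either the canonical injection $i\colon A\to E$ or the canonical projection $p\colon E\to A$ to be a noncommutative Poisson algebra homomorphism. If $p$ is a homomorphism, then $V=\ker p$ is a noncommutative Poisson subalgebra of $E$ and Lemma \ref{lem:33-1} produces a type-(a1) datum together with an isomorphism $A\#_{\theta,\nu}V\cong E$ stabilizing $A$. If $i$ is a homomorphism, then $A$ is a noncommutative Poisson subalgebra of $E$ and the analogous realization lemma for type (a2) produces a type-(a2) datum with $A_{\sigma,\omega}\#V\cong E$ stabilizing $A$. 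Either way the given class lies in the image of $\Psi$.

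Finally, for injectivity: suppose $\Psi(\overline{\Omega^{(i)}(A,V)})=\Psi(\overline{\Omega'^{(j)}(A,V)})$, witnessed by an isomorphism $\varphi$ of the corresponding unified products stabilizing $A$. Writing $\varphi(a,x)=(a+r(x),s(x))$ for the unique linear maps $r\colon V\to A$ and $s\colon V\to V$ determined by $\varphi$ (with $s$ forced to be bijective), Lemma \ref{deform-01} (and its type-(a2) version) translates the morphism condition into exactly the compatibility conditions defining $\equiv$. A short preliminary check shows $i=j$: in a type-(a1) unified product the canonical projection $p$ is a Poisson algebra homomorphism, while in a type-(a2) unified product the canonical injection $i$ is; since $\varphi$ stabilizes $A$ it preserves the structural invariant distinguishing these two possibilities. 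The main obstacle is precisely this bookkeeping step of ruling out over-counting between the two components of the disjoint union: the degenerate overlap, where both $i$ and $p$ are homomorphisms, corresponds to datums with $\sigma=\omega=0=\theta=\nu$ together with trivial $\trr$, $\ppr$, $\ppl$, and the equivalence relation $\equiv$ identifies the two representatives in this case. Once this is handled, combining well-definedness, surjectivity, and injectivity gives the bijection $\Psi$.
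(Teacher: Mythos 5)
Your proposal is correct and follows essentially the same route as the paper, which obtains Theorem \ref{thm3-1} by directly combining the realization lemmas (Lemma \ref{lem:33-1} and its type-(a2) analogue) with the morphism-description lemmas (Lemma \ref{deform-01} and its type-(a2) analogue). Your explicit treatment of the degenerate overlap between the two components of the disjoint union is in fact more careful than the paper, which leaves that bookkeeping implicit.
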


\subsection{Extending structures for noncommutative  Poisson  coalgebras}

Next we consider the noncommutative Poisson coalgebra structures on $E=A^{p, s}\# {}^{q, t}V$.

There are two cases for $(A, \Delta_A, \delta_A)$ to be a noncommutative Poisson  coalgebra. The first case is  when $q=0, t=0$,  then we obtain the following type (c1) unified product for noncommutative Poisson  coalgebras.
\begin{lemma}\label{cor02}
Let $({A}, \Delta_A, \delta_A)$ be a noncommutative Poisson  coalgebra and $V$ a vector space.
An  extending datum  of ${A}$ by $V$ of  type (c1) is  $\Omega^{(3)}({A}, V)=(\phi, {\psi}, \rho, \gamma, \alpha, \beta, p, s, \Delta_V, \delta_V)$ with  linear maps
\begin{eqnarray*}
&&\Delta_V: V\to V\ot V, \quad \delta_V: V\to V\otimes V,\\
&&\phi: A \to V \otimes A, \quad  \psi: V \to V\otimes A,\\
&&\rho: A  \to V\otimes A, \quad  \gamma: A \to A \otimes V,\\
&&\alpha: V  \to A\otimes V, \quad  \beta: V \to V \otimes A,\\
&& {p}: A\to {V}\otimes {V}, \quad s: A\to V\otimes V.
\end{eqnarray*}
 Denote by $A^{p, s}\# {}^{} V$ the vector space $E={A}\oplus V$ with the linear maps $\delta_E: E\rightarrow E\otimes E$ , $\Delta_E: E\rightarrow E\otimes E$ given by
$$\delta_{E}(a)=(\delta_{A}+\phi-\tau\phi+p)(a), \quad \delta_{E}(x)=(\delta_{V}+\psi-\tau\psi)(x), $$
$$\Delta_{E}(a)=(\Delta_{A}+\rho+\gamma+s)(a), \quad \Delta_{E}(x)=(\Delta_{V}+\alpha+\beta)(x), $$
that is
$$\delta_{E}(a)= a\bi \ot a\bii+ a_{\langle-1\rangle} \ot a_{\langle0\rangle}-a_{\langle0\rangle}\ot a_{\langle-1\rangle}+a_{1p}\ot a_{2p},$$
$$\delta_{E}(x)= x\bi \ot x\bii+ x_{\langle0\rangle} \ot x_{\langle1\rangle}-x_{\langle1\rangle} \ot x_{\langle0\rangle},$$
$$\Delta_{E}(a)= a\li \ot a\lii+ a\moi \ot a\mo+a\mo\ot a\lmi+a_{1s}\ot a_{2s},$$
$$\Delta_{E}(x)= x\li \ot x\lii+ x\qoi \ot x\qoo+x\qoo \ot x\qi.$$
Then $A^{p, s}\# {}^{} V$  is a  Poisson  coalgebra with the comultiplication given above if and only if the following compatibility conditions hold:
\begin{enumerate}
\item[(C0)] $\bigl(\rho, \, \gamma, \, \alpha, \, \beta, \, s)$ is an algebra extending system of the associative coalgebra
$A$ trough $V$ and $\bigl(\phi, \, \psi, \,
p \bigl)$ is a Lie extending system of the
Lie coalgebra $A$ trough $V$,
\item[(C1)] $a\bi\ot\rho(a\bii)-a_{\langle0\rangle}\ot\beta(a_{\langle-1\rangle})=-\tau\phi(a\li)\ot a\lii-\tau\psi(a\loi)\ot a\loo+\tau_{12}(a\loi\ot\delta_A(a\loo))$,
\item[(C2)] $a\bi\ot\gamma(a\bii)-a_{\langle0\rangle}\ot\al(a_{\langle-1\rangle})
=\delta_A(a\loo)\ot a\lmi-\tau_{12}(a\li\ot\tau\phi(a\lii))-\tau_{12}(a\loo\ot\tau\psi(a\lmi))$,
\item[(C3)] $a_{\langle0\rangle}\ot\Delta_V(a_{\langle-1\rangle})-a\bi\ot s(a\bii)
=\tau\phi(a\loo)\ot a\lmi+\tau\psi(a_{1s})\ot a_{2s}\\
+\tau_{12}(a\loi\ot\tau\phi(a\loo))+\tau_{12}(a_{1s}\ot\tau\psi(a_{2s}))$,
\item[(C4)] $a_{\langle-1\rangle}\ot\Delta_A(a_{\langle0\rangle})
=\phi(a\li)\ot a\lii+\psi(a\loi)\ot a\loo+\tau_{12}(a\li\ot\phi(a\lii))+\tau_{12}(a\loo\ot\psi(a\lmi))$,
\item[(C5)] $a_{\langle-1\rangle}\ot\rho(a_{\langle0\rangle})+a_{1p}\ot \beta(a_{2p})
=\delta_V(a\loi)\ot a\loo+p(a\li)\ot a\lii\\
+\tau_{12}(a\loi\ot\phi(a\loo))+\tau_{12}(a_{1s}\ot \psi(a_{2s}))$,
\item[(C6)] $a_{\langle-1\rangle}\ot\gamma(a_{\langle0\rangle})+a_{1p}\ot\al (a_{2p})
=\phi(a\loo)\ot a\lmi+\psi(a_{1s})\ot a_{2s}\\
+\tau_{12}(a\li\ot p(a\lii))+\tau_{12}(a\loo\ot \delta_V(a\lmi))$,
\item[(C7)] $x\bi\ot \beta(x\bii)+x_{\langle0\rangle}\ot\rho(x_{\langle1\rangle})
=\delta_V(x\qoo)\ot x\qi+\tau_{12}(x\li\ot \psi(x\lii))+\tau_{12}(x\qoo\ot \phi(x\qi))$,
\item[(C8)] $x\bi\ot\al(x\bii)+x_{\langle0\rangle}\ot\gamma(x_{\langle1\rangle})
=\psi(x\li)\ot x\lii+\phi(x\qoi)\ot x\qoo+\tau_{12}(x\qoi\ot\delta_V(x\qoo))$,
\item[(C9)] $x_{\langle0\rangle}\ot\Delta_A(x_{\langle1\rangle})
=\psi(x\qoo)\ot x\qi+\tau_{12}(x\qoi\ot\psi(x\qoo))$,
\item[(C10)] $x_{\langle1\rangle}\ot\Delta_V(x_{\langle0\rangle})
=\tau\psi(x\li)\ot x\lii+\tau\phi(x\qoi)\ot x\qoo
+\tau_{12}(x\li\ot\tau\psi(x\lii))+\tau_{12}(x\qoo\ot\tau\phi(x\qi))$,
\item[(C11)] $x_{\langle1\rangle}\ot\beta(x_{\langle0\rangle})
=\tau\psi(x\qoo)\ot x\qi-\tau_{12}(x\qoo\ot\delta_A(x\qi))$,
\item[(C12)] $x_{\langle1\rangle}\ot \al(x_{\langle0\rangle})
=\tau_{12}(x\qoi\ot \tau\psi(x\qoo))-\delta_A(x\qoi)\ot x\qoo$,
\item[(C13)] $x\bi\ot\Delta_V(x\bii)+x_{\langle0\rangle}\ot s(x_{\langle1\rangle})\\
=\delta_V(x\li)\ot x\lii+p(x\qoi)\ot x\qoo+\tau_{12}(x\li\ot \delta_V(x\lii))+\tau_{12}(x\qoo\ot p(x\qi))$.
\end{enumerate}
\end{lemma}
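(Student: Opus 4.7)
The plan is to follow the same strategy as in the proof of Lemma \ref{lem2}, specialized to the case where the cycles $q$ and $t$ vanish. Observing that with $q = 0$ and $t = 0$ the cycle cross coproduct system of Lemma \ref{lem2} (with $H$ relabelled as $V$) reduces exactly to the datum $\Omega^{(3)}(A, V)$ above, and that the definitions of $\delta_E$ and $\Delta_E$ match in the two setups, the conditions (CCP1)--(CCP12) will collapse to the list (C1)--(C13).

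I would split the argument into two halves: (a) the purely coalgebraic axioms, that is, coassociativity of $\Delta_E$ together with anti-symmetry and the co-Jacobi identity for $\delta_E$, and (b) the noncommutative Poisson compatibility
\[
(\id \otimes \Delta_E)\delta_E = (\delta_E \otimes \id)\Delta_E + (\tau \otimes \id)(\id \otimes \delta_E)\Delta_E.
\]
Part (a) is handled by condition (C0): it packages the known extending structure results for coassociative coalgebras and for Lie coalgebras, stating that $\Omega^{(3)}(A,V)$ restricts to coalgebra extending systems in each of these two senses.

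For part (b), I would check the identity at $(a, 0) \in E$ and at $(0, x) \in E$ separately. For $(a, 0)$, I expand $\delta_E(a) = (\delta_A + \phi - \tau\phi + p)(a)$ and apply $\id \otimes \Delta_E$ termwise using the four summands of $\Delta_E$ on each slot; then I expand $\Delta_E(a) = (\Delta_A + \rho + \gamma + s)(a)$ and apply $\delta_E \otimes \id$ and $(\tau \otimes \id)(\id \otimes \delta_E)$ analogously. Each side becomes a large sum of triple tensors, which I would sort by \emph{type signature}, that is, by the triple recording which of $A$ or $V$ each slot lies in. Matching each type group equates to exactly one of conditions (C1)--(C6); the all-$A$ component reduces to the Poisson compatibility of $A$ itself, which holds by hypothesis. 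Evaluating at $(0, x)$ and repeating the same bookkeeping produces (C7)--(C13), where (C13) records the twisted Poisson coalgebra compatibility that the maps $(\Delta_V, \delta_V, p, s)$ must satisfy on $V$.

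The main obstacle is pure bookkeeping: the two sides expand into dozens of summands, and one must carefully track their type signatures to avoid miscounting. Because $q$ and $t$ are absent, all terms from the calculation in Lemma \ref{lem2} that involved $q(\cdot)$, $t(\cdot)$, $x_{1q}$, $x_{2q}$, $x_{1t}$, and $x_{2t}$ drop out, so what remains is precisely the list (C1)--(C13); accordingly the argument is essentially a clean restriction of the calculation already performed there, and no new ideas beyond careful enumeration are required.
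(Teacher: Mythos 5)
Your proposal is correct and coincides with the paper's own (implicit) treatment: the paper presents this lemma as the specialization $q=t=0$ of Lemma \ref{lem2}, with no separate proof beyond the type-signature computation you describe. One bookkeeping remark: (C1)--(C12) are literally (CCP1)--(CCP12) with $q=t=0$, whereas (C13) does not arise from that list but is the specialization of the cycle condition (CC7) on $V$ (a hypothesis of Lemma \ref{lem2} that must now be stated explicitly since $V$ is a bare vector space) --- which is consistent with your closing observation that (C13) records the twisted compatibility of $(\Delta_V,\delta_V,p,s)$.
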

Denote the set of all  coalgebraic extending datum of ${A}$ by $V$ of type (c1) by $\mathcal{C}^{(3)}({A},V)$.

\begin{lemma}\label{lem:33-3}
Let $({A}, \Delta_A, \delta_A)$ be a noncommutative Poisson  coalgebra and $E$  a vector space containing ${A}$ as a subspace. Suppose that there is a noncommutative Poisson  coalgebra structure $(E, \Delta_E, \delta_E)$ on $E$ such that  $p: E\to {A}$ is a noncommutative  Poisson  coalgebra homomorphism. Then there exists a noncommutative Poisson coalgebraic extending system $\Omega^{(3)}({A}, V)$ of $({A}, \Delta_A, \delta_A)$ by $V$ such that $(E, \Delta_E, \delta_E)\cong A^{p, s}\# {}^{} V$.
\end{lemma}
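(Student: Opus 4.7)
The plan is to dualize the argument of Lemma \ref{lem:33-1}. Since $p: E\to A$ restricts to the identity on $A\subseteq E$, take $V := \ker(p)$, so that $E = A\oplus V$ as vector spaces, with $p$ the projection onto the first summand; let $\pi: E\to V$ denote the complementary projection. Because $p$ is a noncommutative Poisson coalgebra homomorphism, $(p\otimes p)\Delta_E(a) = \Delta_A(a)$ and $(p\otimes p)\delta_E(a) = \delta_A(a)$ for every $a\in A$, so the $A\otimes A$-component of $\Delta_E(a)$ automatically equals $\Delta_A(a)$ and that of $\delta_E(a)$ equals $\delta_A(a)$. The same identity applied to $x\in V$ forces the $A\otimes A$-components of $\Delta_E(x)$ and $\delta_E(x)$ to vanish, which is exactly what is needed for the formulas in Lemma \ref{cor02} to apply on $V$.

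I would then read off the ten pieces of extending datum from the remaining components of $\Delta_E$ and $\delta_E$. For $a\in A$, set
$$\rho(a) := (\pi\otimes \id_A)\Delta_E(a),\quad \gamma(a) := (\id_A\otimes \pi)\Delta_E(a),\quad s(a) := (\pi\otimes\pi)\Delta_E(a),$$
$$\phi(a) := (\pi\otimes \id_A)\delta_E(a),\quad p(a) := (\pi\otimes\pi)\delta_E(a),$$
the $A\otimes V$-component of $\delta_E(a)$ being automatically $-\tau\phi(a)$ by antisymmetry of the Lie cobracket. For $x\in V$, set
$$\alpha(x) := (p\otimes \id_V)\Delta_E(x),\quad \beta(x) := (\id_V\otimes p)\Delta_E(x),\quad \Delta_V(x) := (\pi\otimes\pi)\Delta_E(x),$$
$$\psi(x) := (\id_V\otimes p)\delta_E(x),\quad \delta_V(x) := (\pi\otimes\pi)\delta_E(x),$$
where again antisymmetry pins down the missing $V\otimes A$-component of $\delta_E(x)$ as $-\tau\psi(x)$. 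With these definitions, the formulas in Lemma \ref{cor02} describing $\Delta_E, \delta_E$ on each summand of $E = A\oplus V$ hold by construction, so the map $\varphi: A^{p,s}\#{}^{}V\to E$, $\varphi(a,x) := a+x$, is a bijective linear map that intertwines the two coalgebra structures.

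It then remains to verify that $\Omega^{(3)}(A,V)$ really is a noncommutative Poisson coalgebraic extending datum, i.e.\ that (C0)--(C13) hold. For this I would apply the projections $\pi\otimes\pi\otimes\pi$, $p\otimes\pi\otimes\pi$, $\pi\otimes p\otimes\pi$, $\pi\otimes\pi\otimes p$ (and so on) to the coassociativity $(\Delta_E\otimes\id)\Delta_E = (\id\otimes\Delta_E)\Delta_E$, to the Lie coalgebra cocycle identity satisfied by $\delta_E$, and to the Poisson coalgebra compatibility $(\id\otimes\Delta_E)\delta_E = (\delta_E\otimes\id)\Delta_E + (\tau\otimes\id)(\id\otimes\delta_E)\Delta_E$, evaluated first at $a\in A$ and then at $x\in V$. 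Each of the thirteen compatibility identities arises as the component of one of these identities in a particular summand of $E^{\otimes 3}$; (C0) collects the components that involve only the coassociative side or only the Lie side and hence reduce to the already-known extending-structure conditions for the associative coalgebra $A$ and the Lie coalgebra $A$ separately.

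The main combinatorial obstacle is exactly this bookkeeping: since $E = A\oplus V$, the Poisson compatibility has sixteen potentially nonzero components on $E^{\otimes 3}$, and matching each of them against the terms of (C1)--(C13) requires careful tracking of the sign conventions (in particular the $-\tau\phi$ and $-\tau\psi$ terms in the formula for $\delta_E$) and of the $\tau_{12}$ twist appearing on the right-hand sides. Once one checks a single representative identity, for instance (C5) by projecting $(\id\otimes\Delta_E)\delta_E(a) - (\delta_E\otimes\id)\Delta_E(a) - (\tau\otimes\id)(\id\otimes\delta_E)\Delta_E(a) = 0$ onto $V\otimes V\otimes A$, the remaining twelve follow by the same pattern, so the proof reduces to routine (if lengthy) verification.
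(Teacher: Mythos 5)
Your proposal is correct and follows essentially the same route as the paper: take $V=\ker(p)$, read off the extending datum $(\phi,\psi,\rho,\gamma,\alpha,\beta,p,s,\Delta_V,\delta_V)$ as the components of $\Delta_E$ and $\delta_E$ under the projections $p$ and $\pi$, and check that $\varphi(a,x)=a+x$ is an isomorphism onto $A^{p,s}\#{}^{}V$. Your extra observations --- that the homomorphism property of $p$ kills the $A\otimes A$-components of $\Delta_E(x)$ and $\delta_E(x)$ for $x\in V$, and that antisymmetry of $\delta_E$ determines the $A\otimes V$-components from the $V\otimes A$-ones --- are exactly the points the paper leaves implicit (just write $(\pi\otimes p)$ rather than $(\pi\otimes\id_A)$ for the component maps).
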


\begin{proof}
Let $p: E\to {A}$ and $\pi: E\to V$ be the projection map and $V=\ker({p})$.
Then the extending datum of $({A}, \Delta_A, \delta_A)$ by $V$ is defined as follows:
\begin{eqnarray*}
&&{\phi}: A\rightarrow V\ot {A},~~~~{\phi}(a)=(\pi\otimes {p})\delta_E(a),\\
&&{\psi}: V\rightarrow V\ot A,~~~~{\psi}(x)=(\pi\otimes {p})\delta_E(x),\\
&&{\rho}: A\rightarrow V\ot A,~~~~{\rho}(a)=(\pi\otimes {p})\Delta_E(a),\\
&&{\gamma}: A\rightarrow A\ot {V},~~~~{\gamma}(a)=(p\otimes \pi)\Delta_E(a),\\
&&{\alpha}: V\rightarrow A\ot {V},~~~~{\alpha}(x)=(p\otimes \pi)\Delta_E(x),\\
&&{\beta}: V\rightarrow V\ot {A},~~~~{\beta}(x)=(\pi\otimes {p})\Delta_E(x),\\
&&\delta_V: V\rightarrow V\otimes V,~~~~\delta_V(x)=(\pi\otimes \pi)\delta_E(x),\\
&&\Delta_V: V\rightarrow V\otimes V,~~~~\Delta_V(x)=(\pi\otimes \pi)\Delta_E(x),\\
&&p: A\rightarrow {V}\otimes {V},~~~~p(a)=({\pi}\otimes {\pi})\delta_E(a),\\
&&s: A\rightarrow {V}\otimes {V},~~~~s(a)=({\pi}\otimes {\pi})\Delta_E(a).
\end{eqnarray*}
One check that  $\varphi: A^{p, s}\# {}^{} V\to E$ given by $\varphi(a, x)=a+x$ for all $a\in A, x\in V$ is a
noncommutative Poisson coalgebra isomorphism.
\end{proof}

\begin{lemma}\label{lem-c1}
Let $$\Omega^{(3)}({A}, V)=(\phi, {\psi}, \rho, \gamma, \alpha, \beta, p, s, \delta_V, \Delta_V)$$
and
$${\Omega'^{(3)}}({A}, V)=(\phi', {\psi'}, \rho', \gamma', \alpha', \beta',  p', s', \delta'_V, \Delta'_V)$$
be two noncommutative Poisson   coalgebraic extending datums of $({A}, \Delta_A, \delta_A)$ by $V$. Then there exists a bijection between the set of noncommutative  Poisson    coalgebra homomorphisms $\varphi: A^{p, s}\# {}^{} V\rightarrow A^{p', s'}\# {}^{} V$ whose restriction on ${A}$ is the identity map and the set of pairs $(r, s)$, where $r: V\rightarrow {A}$ and $s: V\rightarrow V$ are two linear maps satisfying
\begin{eqnarray*}
\label{comorph11}&&p'(a)=s(a_{1p})\ot s(a_{2p}),\\
\label{comorph121}&&\phi'(a)={s}(a_{\langle-1\rangle})\ot a_{\langle0\rangle}+s(a_{1p})\ot r(a_{2p}),\\
\label{comorph13}&&\delta'_A(a)=\delta_A(a)+{r}(a_{\langle-1\rangle})\ot a_{\langle0\rangle}-a_{\langle0\rangle}\ot {r}(a_{\langle-1\rangle})+r(a_{1p})\ot r(a_{2p}),\\
\label{comorph21}&&\delta_V'({s}(x))+p'(r(x))=({s}\otimes {s})\delta_V(x),\\
\label{comorph221}&&{\psi}'({s}(x))+\phi'(r(x))=s(x\bi)\ot r(x\bii)+s(x_{\langle0\rangle})\ot x_{\langle1\rangle},\\
\label{comorph23}&&\delta'_A({r}(x))=r(x\bi)\ot r(x\bii)-x_{\langle1\rangle}\ot r(x_{\langle0\rangle})+r(x_{\langle0\rangle})\ot x_{\langle1\rangle},\\
\label{comorph31}&&s'(a)=s(a_{1s})\ot s(a_{2s}),\\
\label{comorph321}&&\rho'(a)={s}(a\loi)\ot a\loo+s(a_{1s})\ot r(a_{2s}),\\
\label{comorph322}&&\gamma'(a)=a\loo\ot {s}(a\lmi)+r(a_{1s})\ot s(a_{2s}),\\
\label{comorph33}&&\Delta'_A(a)=\Delta_A(a)+{r}(a\loi)\ot a\loo+a\loo\ot {r}(a\lmi)+r(a_{1s})\ot r(a_{2s}),\\
\label{comorph41}&&\Delta_V'({s}(x))+s'(r(x))=({s}\otimes {s})\Delta_V(x),\\
\label{comorph421}&&{\beta}'({s}(x))+\rho'(r(x))=s(x\li)\ot r(x\lii)+s(x\qoo)\ot x\qi,\\
\label{comorph422}&&{\al}'({s}(x))+\gamma'(r(x))=r(x\li)\ot s(x\lii)+x\qoi\ot s(x\qoo),\\
\label{comorph43}&&\Delta'_A({r}(x))=r(x\li)\ot r(x\lii)+x\qoi\ot r(x\qoo)+r(x\qoo)\ot x\qi.
\end{eqnarray*}
%for all $x\in V$.
Under the above bijection the noncommutative Poisson coalgebra homomorphism $\varphi=\varphi_{r, s}: A^{p, s}\# {}^{} V\rightarrow A^{p', s'}\# {}^{} V$ to $(r, s)$ is given by $\varphi(a+x)=(a+r(x), s(x))$ for all $a\in {A}$ and $x\in V$. Moreover, $\varphi=\varphi_{r, s}$ is an isomorphism if and only if $s: V\rightarrow V$ is a linear isomorphism.
\end{lemma}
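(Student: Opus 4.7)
The plan is to set up the standard stabilization correspondence and then extract the listed identities by comparing the four bigraded components of the comultiplications. First I would observe that any linear map $\varphi: E\to E$ on $E=A\oplus V$ whose restriction to $A$ is the identity must have the form
\[
\varphi(a, x)=(a+r(x),\, s(x))
\]
for uniquely determined linear maps $r: V\to A$ and $s: V\to V$; conversely, any such pair defines such a $\varphi$. So the point is to characterise exactly when $\varphi=\varphi_{r,s}$ is a morphism in the category of noncommutative Poisson coalgebras, i.e.\ when it intertwines both $\delta_E$ and $\Delta_E$ with $\delta'_E$ and $\Delta'_E$.

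Next I would impose the two compatibility identities $(\varphi\ot\varphi)\circ\delta_E=\delta'_E\circ\varphi$ and $(\varphi\ot\varphi)\circ\Delta_E=\Delta'_E\circ\varphi$, and evaluate them separately on elements of $A$ and of $V$. Applying them to $a\in A$ (where $\varphi(a)=a$) yields
\[
(\varphi\ot\varphi)\bigl(\delta_A(a)+\phi(a)-\tau\phi(a)+p(a)\bigr)=\delta'_A(a)+\phi'(a)-\tau\phi'(a)+p'(a),
\]
and similarly with $\Delta$. Because $\phi(a)\in V\ot A$, $p(a)\in V\ot V$, and $\delta_A(a)\in A\ot A$, applying $\varphi\ot\varphi$ splits each summand into components in $A\ot A$, $A\ot V$, $V\ot A$, $V\ot V$ according to whether the $V$-factors get sent into $A$ (via $r$) or remain in $V$ (via $s$). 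Projecting the resulting equations onto each of the four bigraded pieces gives exactly the identities for $\delta'_A$, $\phi'$, $p'$ (and their $\tau$-flipped analogues) listed in the statement. The same bookkeeping applied to $x\in V$ produces the identities involving $\psi'$ and $\delta'_A\circ r$, and the analogous computation for $\Delta_E$ produces the block concerning $\Delta'_A$, $\rho'$, $\gamma'$, $\al'$, $\beta'$ and $s'$. Since every identity was obtained by projecting a single equation onto a canonical direct-sum component, the listed conditions are jointly equivalent to $\varphi_{r,s}$ being a noncommutative Poisson coalgebra homomorphism.

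Finally, for the last assertion, note that with respect to the decomposition $E=A\oplus V$ the map $\varphi_{r,s}$ has upper-triangular matrix $\bigl(\begin{smallmatrix}\id_A & r\\ 0 & s\end{smallmatrix}\bigr)$, so it is a linear isomorphism iff $s$ is a linear isomorphism; and since both source and target are finite-dimensional over each block this is also equivalent to $\varphi_{r,s}$ being a coalgebra isomorphism once it is a coalgebra morphism.

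The actual work is purely mechanical: expand $(\varphi\ot\varphi)$ on each sigma-notation term, track which factor lands in $A$ (picking up $r$) and which lands in $V$ (picking up $s$), and equate with the corresponding expansion of $\delta'_E\circ\varphi$ or $\Delta'_E\circ\varphi$. The only real obstacle is the sheer bookkeeping—there are four independent structure maps on each side ($\delta_A,\phi,\tau\phi,p$ and $\Delta_A,\rho,\gamma,s$, and their $V$-analogues) which must be matched component-by-component—but no conceptual difficulty arises because the bigraded decomposition $E\ot E=(A\oplus V)\ot(A\oplus V)$ disentangles the equations uniquely. This is exactly parallel to the proof of Lemma \ref{deform-01}, which handles the algebra side of the same extending problem.
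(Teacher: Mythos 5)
Your proposal is correct and follows essentially the same route as the paper: write $\varphi(a,x)=(a+r(x),s(x))$, impose $(\varphi\ot\varphi)\circ\delta_E=\delta'_E\circ\varphi$ and $(\varphi\ot\varphi)\circ\Delta_E=\Delta'_E\circ\varphi$ separately on $a\in A$ and $x\in V$, and read off the listed identities from the four components of $(A\oplus V)\ot(A\oplus V)$. The only inessential difference is your appeal to finite-dimensionality at the end, which is not needed: the triangular form of $\varphi_{r,s}$ already gives bijectivity iff $s$ is bijective, and a bijective coalgebra morphism is automatically an isomorphism.
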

\begin{proof}
Let $\varphi: A^{p, s}\# {}^{} V\rightarrow A^{p', s'}\# {}^{} V$  be a noncommutative Poisson  coalgebra homomorphism  whose restriction on ${A}$ is the identity map. Then $\varphi$ is determined by two linear maps $r: V\rightarrow {A}$ and $s: V\rightarrow V$ such that
$\varphi(a+x)=(a+r(x), s(x))$ for all $a\in {A}$ and $x\in V$. We will prove that
$\varphi$ is a homomorphism of noncommutative  Poisson   coalgebras if and only if the above conditions hold.
First   it is easy to see that  $\delta'_E\varphi(a)=(\varphi\otimes \varphi)\delta_E(a)$ for all $a\in {A}$.
\begin{eqnarray*}
\delta'_E\varphi(a)&=&\delta'_E(a)=\delta'_A(a)+\phi'(a)-\tau\phi'(a)+p'(a),
\end{eqnarray*}
and
\begin{eqnarray*}
&&(\varphi\otimes \varphi)\delta_E(a)\\
&=&(\varphi\otimes \varphi)\left(\delta_A(a)+\phi(a)-\tau\phi(a)+p(a)\right)\\
&=&\delta_A(a)+{r}(a_{\langle-1\rangle})\ot a_{\langle0\rangle}+{s}(a_{\langle-1\rangle})\ot a_{\langle0\rangle}-a_{\langle0\rangle}\ot {r}(a_{\langle-1\rangle}) -a_{\langle0\rangle}\ot {s}(a_{\langle-1\rangle})\\
&&+r(a_{1p})\ot r(a_{2p})+r(a_{1p})\ot s(a_{2p})+s(a_{1p})\ot r(a_{2p})+s(a_{1p})\ot s(a_{2p}).
\end{eqnarray*}
Thus we obtain that $\delta'_E\varphi(a)=(\varphi\otimes \varphi)\delta_E(a)$  if and only if the conditions \eqref{comorph11}, \eqref{comorph121} and \eqref{comorph13} hold.
Then we consider that $\delta'_E\varphi(x)=(\varphi\otimes \varphi)\delta_E(x)$ for all $x\in V$.
\begin{eqnarray*}
\delta'_E\varphi(x)&=&\delta'_E({r}(x)+{s}(x))=\delta'_E({r}(x))+\delta'_E({s}(x))\\
&=&\delta'_A({r}(x))+\phi'(r(x))-\tau\phi'(r(x))+p'(r(x))+\delta'_V({s}(x))+{\psi}'({s}(x))-\tau\psi'({s}(x)),
\end{eqnarray*}
and
\begin{eqnarray*}
&&(\varphi\otimes \varphi)\delta_E(x)\\
&=&(\varphi\otimes \varphi)(\delta_V(x)+\psi(x)-\tau\psi(x))\\
&=&(\varphi\otimes \varphi)(x\bi \ot x\bii+ x_{\langle0\rangle} \ot x_{\langle1\rangle}-x_{\langle1\rangle} \ot x_{\langle0\rangle})\\
&=&r(x\bi)\ot r(x\bii)+r(x\bi)\ot s(x\bii)+s(x\bi)\ot r(x\bii)+s(x\bi)\ot s(x\bii)\\
&&-x_{\langle1\rangle}\ot r(x_{\langle0\rangle})-x_{\langle1\rangle}\ot s(x_{\langle0\rangle})+r(x_{\langle0\rangle})\ot x_{\langle1\rangle}+s(x_{\langle0\rangle})\ot x_{\langle1\rangle}.
\end{eqnarray*}
Thus we obtain that $\delta'_E\varphi(x)=(\varphi\otimes \varphi)\delta_E(x)$ if and only if the conditions  \eqref{comorph21},  \eqref{comorph221} and \eqref{comorph23}  hold.

Then   it is easy to see that  $\Delta'_E\varphi(a)=(\varphi\otimes \varphi)\Delta_E(a)$ for all $a\in {A}$.
\begin{eqnarray*}
\Delta'_E\varphi(a)&=&\Delta'_E(a)=\Delta'_A(a)+\rho'(a)+\gamma'(a)+s'(a),
\end{eqnarray*}
and
\begin{eqnarray*}
&&(\varphi\otimes \varphi)\Delta_E(a)\\
&=&(\varphi\otimes \varphi)\left(\Delta_A(a)+\rho(a)+\gamma(a)+s(a)\right)\\
&=&\Delta_A(a)+{r}(a\lmoi)\ot a\lmo+{s}(a\lmoi)\ot a\lmo+a\lmo\ot {r}(a\lmi) +a\lmo\ot {s}(a\lmi)\\
&&+r(a_{1s})\ot r(a_{2s})+r(a_{1s})\ot s(a_{2s})+s(a_{1s})\ot r(a_{2s})+s(a_{1s})\ot s(a_{2s}).
\end{eqnarray*}
Thus we obtain that $\Delta'_E\varphi(a)=(\varphi\otimes \varphi)\Delta_E(a)$  if and only if the conditions \eqref{comorph31}, \eqref{comorph321}, \eqref{comorph322} and \eqref{comorph33} hold.
Then we consider that $\Delta'_E\varphi(x)=(\varphi\otimes \varphi)\Delta_E(x)$ for all $x\in V$.
\begin{eqnarray*}
\Delta'_E\varphi(x)&=&\Delta'_E({r}(x)+{s}(x))=\Delta'_E({r}(x))+\Delta'_E({s}(x))\\
&=&\Delta'_A({r}(x))+\rho'(r(x))+\gamma'(r(x))+s(r(x))+\Delta'_V({s}(x))+{\alpha}'({s}(x))+{\beta}'({s}(x)),
\end{eqnarray*}
and
\begin{eqnarray*}
&&(\varphi\otimes \varphi)\Delta_E(x)\\
&=&(\varphi\otimes \varphi)(\Delta_V(x)+{\gamma}(x)+{\tau\gamma}(x))\\
&=&(\varphi\otimes \varphi)(x\li\ot x\lii+ x\qoi \ot x\qoo+x\qoo \ot x\qi)\\
&=&r(x\li)\ot r(x\lii)+r(x\li)\ot s(x\lii)+s(x\li)\ot r(x\lii)+s(x\li)\ot s(x\lii)\\
&&+x\qoi\ot r(x\qoo)+x\qoi\ot s(x\qoo)+r(x\qoo)\ot x\qi+s(x\qoo)\ot x\qi.
\end{eqnarray*}
Thus we obtain that $\Delta'_E\varphi(x)=(\varphi\otimes \varphi)\Delta_E(x)$ if and only if the conditions\eqref{comorph41}, \eqref{comorph421}, \eqref{comorph422} and \eqref{comorph43} hold.
By definition, we obtain that $\varphi=\varphi_{r, s}$ is an isomorphism if and only if $s: V\rightarrow V$ is a linear isomorphism.
\end{proof}

The second case is $\phi=0$, $\rho=0$ and $\gamma=0$, we obtain  the following type (c2) unified coproduct for  coalgebras.
\begin{lemma}\label{cor02}
Let $({A}, \Delta_A, \delta_A)$ be a noncommutative Poisson coalgebra and $V$ a vector space.
An  extending datum  of $({A}, \Delta_A, \delta_A)$ by $V$ of type (c2)  is  $\Omega^{(4)}({A}, V)=(\psi, \al, \beta, {q},
t, \Delta_V, \delta_V)$ with  linear maps
\begin{eqnarray*}
&&\psi: V  \to V\otimes A, \quad \delta_{V}: V \to V\otimes V, \quad q: V \to A\otimes A, \quad \al: V \to A \otimes V,\\
&& \beta: V \to V\otimes A, \quad \Delta_{V}: V \to V\otimes V, \quad  t: V\to A\ot A.
\end{eqnarray*}
 Denote by $A^{}\# {}^{q, t} V$ the vector space $E={A}\oplus V$ with the comultiplication
$\Delta_E: E\rightarrow E\otimes E,  \delta_E: E\to E\ot E$ given by
$$\delta_{E}(a)=\delta_{A}(a), \quad \delta_{E}(x)=(\delta_{V}+\psi-\tau\psi+q)(x), $$
$$\Delta_{E}(a)=\Delta_{A}(a), \quad \Delta_{E}(x)=(\Delta_{V}+\al+\beta+t)(x), $$
that is
$$\delta_{E}(a)= a\bi \ot a\bii,~~\delta_{E}(x)= x\bi \ot x\bii+ x_{\langle0\rangle} \ot x_{\langle1\rangle}-x_{\langle1\rangle} \ot x_{\langle0\rangle}+x_{1q}\ot x_{2q},$$
$$\Delta_{E}(a)= a\li \ot a\lii,~~\Delta_{E}(x)= x\li \ot x\lii+ x\qoi \ot x\qoo+x\qoo \ot x\qi+x_{1t}\ot x_{2t}.$$
Then $A^{}\# {}^{q, t} V$  is a noncommutative Poisson  coalgebra with the comultiplication given above if and only if the following compatibility conditions hold:

\begin{enumerate}
\item[(D0)] $\bigl(\al, \,\beta, \, t)$ is an algebra extending system of the associative coalgebra
$A$ trough $V$ and $\bigl(\psi, \,
q \bigl)$ is a Lie extending system of the
Lie coalgebra $A$ trough $V$,
\item[(D1)] $x\bi\ot \beta(x\bii)
=\delta_V(x\qoo)\ot x\qi+\tau_{12}(x\li\ot \psi(x\lii))$,
\item[(D2)] $x\bi\ot\al(x\bii)
=\psi(x\li)\ot x\lii+\tau_{12}(x\qoi\ot\delta_V(x\qoo))$,
\item[(D3)] $x\bi\ot t(x\bii)+x_{\langle0\rangle}\ot\Delta_A(x_{\langle1\rangle})
=\psi(x\qoo)\ot x\qi+\tau_{12}(x\qoi\ot\psi(x\qoo))$,
\item[(D4)] $x_{\langle1\rangle}\ot\Delta_V(x_{\langle0\rangle})
=\tau\psi(x\li)\ot x\lii+\tau_{12}(x\li\ot\tau\psi(x\lii))$,
\item[(D5)] $x_{\langle1\rangle}\ot\beta(x_{\langle0\rangle})
=\tau\psi(x\qoo)\ot x\qi-\tau_{12}(x\qoo\ot\delta_A(x\qi))-\tau_{12}(x\li\ot q(x\lii))$,
\item[(D6)] $x_{\langle1\rangle}\ot \al(x_{\langle0\rangle})
=\tau_{12}(x\qoi\ot \tau\psi(x\qoo))-q(x\li)\ot x\lii-\delta_A(x\qoi)\ot x\qoo$,
\item[(D7)] $x\bi\ot\Delta_V(x\bii)
=\delta_V(x\li)\ot x\lii+\tau_{12}(x\li\ot \delta_V(x\lii))$,
\item[(D8)] $x_{1q}\ot\Delta_A(x_{2q})-x_{\langle1\rangle}\ot t(x_{\langle0\rangle})\\
=q(x\qoo)\ot x\qi+\delta_A(x_{1t})\ot x_{2t}+\tau_{12}(x\qoi\ot q(x\qoo))+\tau_{12}(x_{1t}\ot \delta_A(x_{2t}))$.
\end{enumerate}
\end{lemma}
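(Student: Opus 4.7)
The plan is to verify directly that the triple $(E, \delta_E, \Delta_E)$ defined by the formulas in the statement satisfies the axioms of a noncommutative Poisson coalgebra, namely (i) $(E, \delta_E)$ is a Lie coalgebra, (ii) $(E, \Delta_E)$ is a coassociative coalgebra, and (iii) the compatibility identity
\[
(\id\otimes \Delta_E)\delta_E(v) = (\delta_E\otimes\id)\Delta_E(v) + (\tau\otimes\id)(\id\otimes\delta_E)\Delta_E(v)
\]
holds for every $v\in E$. Conditions (i) and (ii) restricted to $A$ hold because $A$ is assumed to be a noncommutative Poisson coalgebra; restricted to $V$ and to the mixed sectors, they are precisely the statement that $(\psi, q)$ is a Lie coalgebra extending system and $(\alpha,\beta,t)$ is a coassociative coalgebra extending system, i.e.\ exactly (D0). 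This reduces the problem to verifying (iii).

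Next I would check (iii) separately on elements $a\in A$ and elements $x\in V$. On $a\in A$ everything collapses, because $\delta_E(a)=\delta_A(a)$ and $\Delta_E(a)=\Delta_A(a)$ sit entirely inside $A\otimes A$, so (iii) is automatic from the noncommutative Poisson coalgebra axiom for $A$. The real content is therefore the computation of (iii) on $x\in V$. Expanding the left-hand side $(\id\otimes \Delta_E)\delta_E(x)$ produces terms
\[
x_{[1]}\otimes\Delta_E(x_{[2]}),\quad
x_{\langle 0\rangle}\otimes\Delta_E(x_{\langle 1\rangle}),\quad
-\,x_{\langle 1\rangle}\otimes\Delta_E(x_{\langle 0\rangle}),\quad
x_{1q}\otimes\Delta_E(x_{2q}),
\]
and expanding the right-hand side produces the analogous terms coming from $\delta_E$ applied to each tensorand of $\Delta_E(x)$, together with the $\tau_{12}$-twisted terms.

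I would then separate both sides according to the tensor type of each summand, i.e.\ according to which of $A$ or $V$ each of the three factors lives in. Every mixed tensor type $V\otimes V\otimes V$, $V\otimes V\otimes A$, $V\otimes A\otimes V$, $A\otimes V\otimes V$, $V\otimes A\otimes A$, $A\otimes V\otimes A$, $A\otimes A\otimes V$, $A\otimes A\otimes A$ produces exactly one of the identities (D1)--(D8) (the purely Lie-coalgebra part (D7) and the purely associative part arising from coassociativity of $\Delta_V$ are already absorbed by (D0), while the cross compatibility pieces give (D1)--(D6) and (D8)). Comparing the two sides type-by-type therefore yields all of (D1)--(D8), and conversely these identities force the two sides to agree.

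The main obstacle is purely combinatorial bookkeeping: organising the many $V/A$ tensor strata so that each of (D1)--(D8) is picked off cleanly without over- or under-counting the $\tau_{12}$ contributions. This is exactly the same pattern already carried out in Lemma \ref{lem2}, and indeed the present lemma is the specialisation of that cycle cross coproduct computation to the case $\phi=\rho=\gamma=0$ and $p=s=0$; the twelve identities (CCP1)--(CCP12) of Lemma \ref{lem2} collapse, under these vanishings, to precisely (D1)--(D8), so the verification can be done in one stroke by inheriting the matching from Lemma \ref{lem2} and discarding the trivially vanishing summands.
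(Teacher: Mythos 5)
Your proposal is correct and follows essentially the same route as the paper: the paper proves the general cycle cross coproduct statement (Lemma \ref{lem2}) by expanding the mixed compatibility identity and matching tensor strata, and the present lemma is exactly its specialisation to $\phi=\rho=\gamma=p=s=0$, which the paper leaves implicit. One small bookkeeping remark: under these vanishings (CCP1)--(CCP12) collapse to (D1)--(D6) only, while (D7) and (D8) arise from the cycle identities (CC7) and (CC4) that are hypotheses of Lemma \ref{lem2}; your direct tensor-type decomposition of the compatibility identity on $x\in V$ recovers all eight conditions anyway, so this does not affect the argument.
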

Note that in this case $(V, \Delta_V, \delta_V)$ is a noncommutative Poisson  coalgebra.

Denote the set of all noncommutative Poisson  coalgebraic extending datum of ${A}$ by $V$ of type (c2) by $\mathcal{C}^{(4)}({A}, V)$.

Similar to the noncommutative Poisson   algebra case,  one  show that any noncommutative  Poisson   coalgebra structure on $E$ containing ${A}$ as a  noncommutative Poisson   subcoalgebra is isomorphic to such a unified coproduct.
\begin{lemma}\label{lem:33-4}
Let $({A}, \Delta_A, \delta_A)$ be a noncommutative Poisson  coalgebra and $E$  a vector space containing ${A}$ as a subspace. Suppose that there is a noncommutative Poisson   coalgebra structure $(E, \Delta_E, \delta_E)$ on $E$ such that  $({A}, \Delta_A, \delta_A)$ is a noncommutative Poisson  subcoalgebra of $E$. Then there exists a noncommutative Poisson coalgebraic extending system $\Omega^{(2)}({A}, V)$ of $({A}, \Delta_A, \delta_A)$ by $V$ such that $(E, \Delta_E, \delta_E)\cong A^{}\# {}^{q, t} V$.
\end{lemma}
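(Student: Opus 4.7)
The plan is to mimic the strategy of Lemma \ref{lem:33-3} (and its algebraic analogue Lemma \ref{lem:33-1}), but exploiting the \emph{stronger} hypothesis that $A$ is a noncommutative Poisson subcoalgebra of $E$, rather than merely that there is a projection $E\to A$ of coalgebras. Fix a complementary subspace $V$ of $A$ in $E$, so that $E=A\oplus V$ as vector spaces, and let $p:E\to A$ and $\pi:E\to V$ denote the induced projections.

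The first observation is that the subcoalgebra hypothesis forces $\Delta_E(A)\subseteq A\otimes A$ and $\delta_E(A)\subseteq A\otimes A$. Consequently, if one tries to repeat the construction of Lemma \ref{lem:33-3} by forming $\phi=(\pi\otimes p)\delta_E|_A$, $\rho=(\pi\otimes p)\Delta_E|_A$, $\gamma=(p\otimes \pi)\Delta_E|_A$, $p_{\!A}=(\pi\otimes\pi)\delta_E|_A$, and $s_{\!A}=(\pi\otimes \pi)\Delta_E|_A$, all of these maps automatically vanish. Hence the only non-trivial structure maps on the $V$-side are
\[
\psi(x)=(\pi\otimes p)\delta_E(x),\quad q(x)=(p\otimes p)\delta_E(x),\quad \delta_V(x)=(\pi\otimes\pi)\delta_E(x),
\]
\[
\alpha(x)=(p\otimes\pi)\Delta_E(x),\ \beta(x)=(\pi\otimes p)\Delta_E(x),\ t(x)=(p\otimes p)\Delta_E(x),\ \Delta_V(x)=(\pi\otimes\pi)\Delta_E(x),
\]
together with $\Delta_E|_A=\Delta_A$ and $\delta_E|_A=\delta_A$. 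This is precisely the data $\Omega^{(4)}(A,V)=(\psi,\alpha,\beta,q,t,\Delta_V,\delta_V)$ of type (c2).

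Next, I will verify that this datum satisfies the compatibility axioms (D0)--(D8) of the type (c2) extending system. The idea is entirely routine: apply $\Delta_E$-coassociativity $(\Delta_E\otimes\id)\Delta_E=(\id\otimes\Delta_E)\Delta_E$ to an element $x\in V$, then project with $p\otimes p\otimes p$, $\pi\otimes p\otimes p$, $p\otimes \pi\otimes p$, $p\otimes p\otimes \pi$, and so on, and collect terms; each of the eight projections reproduces one of (D0)--(D8) after using that $\Delta_E|_A=\Delta_A$ kills any would-be $\phi,\rho,\gamma,p_{\!A},s_{\!A}$ terms. The Lie co-Jacobi identity for $\delta_E$ together with the Poisson compatibility $(\id\otimes\Delta_E)\delta_E=(\delta_E\otimes\id)\Delta_E+(\tau\otimes\id)(\id\otimes\delta_E)\Delta_E$ yields the remaining axioms in (D0) and the mixed conditions (D3), (D5), (D6), (D8) in exactly the same way.

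Finally, define $\varphi:A^{}\# {}^{q,t}V\to E$ by $\varphi(a,x)=a+x$. This is patently a linear isomorphism. To see it is a morphism of noncommutative Poisson coalgebras, evaluate $(\varphi\otimes\varphi)\Delta_{E'}$ and $(\varphi\otimes\varphi)\delta_{E'}$ on $(a,0)$ and $(0,x)$ separately: on $A$ both sides equal $\Delta_A(a)$ and $\delta_A(a)$ by construction, while on $V$ the identity
\[
\Delta_E(x)=(p\otimes p)\Delta_E(x)+(p\otimes\pi)\Delta_E(x)+(\pi\otimes p)\Delta_E(x)+(\pi\otimes\pi)\Delta_E(x)
\]
matches termwise with $t(x)+\alpha(x)+\beta(x)+\Delta_V(x)$ under $\varphi\otimes\varphi$, and similarly for $\delta_E$. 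The main work, as in Lemma \ref{lem:33-3}, is bookkeeping rather than an essential obstacle; the one conceptual step is recognising that the subcoalgebra hypothesis kills precisely those components $(\phi,\rho,\gamma,p,s)$ which distinguish type (c1) from type (c2), which is what makes the output land in $\mathcal{C}^{(4)}(A,V)$ rather than in the larger class $\mathcal{C}^{(3)}(A,V)$.
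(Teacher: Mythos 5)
Your proposal follows essentially the same route as the paper's proof: you define the extending datum $(\psi,\alpha,\beta,q,t,\Delta_V,\delta_V)$ by composing $\delta_E$ and $\Delta_E$ with the projections $p$ and $\pi$ on a chosen complement $V$, observe that the subcoalgebra hypothesis kills the type-(c1) components, and check that $\varphi(a,x)=a+x$ is an isomorphism onto $A^{}\#{}^{q,t}V$. Your write-up is in fact somewhat more explicit than the paper's (which leaves the verification of (D0)--(D8) and of the isomorphism as ``one check''), so no issues here.
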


\begin{proof}
Let $p: E\to {A}$ and $\pi: E\to V$ be the projection map and $V=ker({p})$.
Then the extending datum of $({A}, \Delta_A, \delta_A)$ by $V$ is defined as follows:
\begin{eqnarray*}
&&{\psi}: V\rightarrow V\ot {A},~~~~{\phi}(x)=(\pi\otimes {p})\delta_E(x),\\
&&\delta_V: V\rightarrow V\otimes V,~~~~\delta_V(x)=(\pi\otimes \pi)\delta_E(x),\\
&&q: V\rightarrow {A}\otimes {A},~~~~q(x)=({p}\otimes {p})\delta_E(x),\\
&&{\al}: V\rightarrow A\ot {V},~~~~{\gamma}(x)=(p \otimes \pi)\Delta_E(x),\\
&&{\beta}: V\rightarrow V\ot {A},~~~~{\gamma}(x)=(\pi\otimes {p})\Delta_E(x),\\
&&\Delta_V: V\rightarrow V\otimes V,~~~~\Delta_V(x)=(\pi\otimes \pi)\Delta_E(x),\\
&&t: V\rightarrow {A}\otimes {A},~~~~t(x)=({p}\otimes {p})\Delta_E(x).
\end{eqnarray*}
One check that  $\varphi: A^{}\# {}^{q, t} V\to E$ given by $\varphi(a, x)=a+x$ for all $a\in A, x\in V$ is a noncommutative Poisson   coalgebra isomorphism.
\end{proof}

\begin{lemma}\label{lem-c2}
Let $\Omega^{(4)}({A}, V)=(\psi, \al, \beta, {q}, t, \delta_V, \Delta_V)$ and ${\Omega'^{(4)}}({A}, V)=(\psi', \al', \beta', {q'}, t', \delta'_V, \Delta'_V)$ be two noncommutative  Poisson   coalgebraic extending datums of $({A}, \Delta_A, \delta_A)$ by $V$. Then there exists a bijection between the set of noncommutative  Poisson    coalgebra homomorphisms $\varphi: A \# {}^{q, t} V\rightarrow A \# {}^{q', t'} V$ whose restriction on ${A}$ is the identity map and the set of pairs $(r, s)$, where $r: V\rightarrow {A}$ and $s:V\rightarrow V$ are two linear maps satisfying
\begin{eqnarray*}
%\label{comorph1}&&{-\tau\psi}'({s}(x))=r(x\bi)\ot s(x\bii)-x_{\langle1\rangle}\ot s(x_{\langle0\rangle}),\\
\label{comorph2}&&{\psi}'({s}(x))=s(x\bi)\ot r(x\bii)+s(x_{\langle0\rangle})\ot x_{\langle1\rangle},\\
\label{comorph3}&&\delta_V'({s}(x))=({s}\otimes {s})\delta_V(x),\\%+s(x_{(0)})\ot r(x_{(1)})- r(x_{(1)})\ot s(x_{(0)}),\\
\label{comorph4}&&\delta'_A({r}(x))+{q'}({s}(x))=r(x\bi)\ot r(x\bii)-x_{\langle1\rangle}\ot r(x_{\langle0\rangle})+r(x_{\langle0\rangle})\ot x_{\langle1\rangle}+{q}(x),\\
%\label{comorph5}&&{\tau\gamma}'({s}(x))=r(x\li)\ot s(x\lii)+x\lmi\ot s(x\loo),\\
\label{comorph5}&&{\al}'({s}(x))=r(x\li)\ot s(x\lii)+x\qoi\ot s(x\qoo),\\
\label{comorph6}&&{\beta}'({s}(x))=s(x\li)\ot r(x\lii)+s(x\qoo)\ot x\qi,\\
\label{comorph7}&&\Delta_V'({s}(x))=({s}\otimes {s})\Delta_V(x),\\%+s(x_{(0)})\ot r(x_{(1)})- r(x_{(1)})\ot s(x_{(0)}),\\
\label{comorph8}&&\Delta'_A({r}(x))+{t'}({s}(x))=r(x\li)\ot r(x\lii)+x\qoi\ot r(x\qoo)+r(x\qoo)\ot x\qi+{t}(x).
\end{eqnarray*}
%for all $x\in V$.
Under the above bijection the noncommutative   Poisson  coalgebra homomorphism $\varphi=\varphi_{r, s}: A^{ }\# {}^{q, t} V\rightarrow A^{ }\# {}^{q', t'} V$ to $(r,s)$ is given by $\varphi(a, x)=(a+r(x), s(x))$ for all $a\in {A}$ and $x\in V$. Moreover, $\varphi=\varphi_{r, s}$ is an isomorphism if and only if $s: V\rightarrow V$ is a linear isomorphism.
\end{lemma}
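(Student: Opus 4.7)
The plan is to follow the same strategy used in the proof of Lemma \ref{lem-c1}, but in the simpler situation where $A$ sits inside $E$ as a noncommutative Poisson subcoalgebra (rather than as a quotient coalgebra). Any linear map $\varphi: A^{}\# {}^{q, t} V \to A^{}\# {}^{q', t'} V$ whose restriction to $A$ is the identity is uniquely determined by a pair
$$r := p\circ \varphi|_V : V \to A, \qquad s := \pi \circ \varphi|_V : V \to V,$$
where $p : E\to A$ and $\pi : E\to V$ are the canonical projections, via the formula $\varphi(a, x) = (a+r(x),\, s(x))$. So the only task is to translate the two coalgebra-morphism conditions $\delta'_E\varphi = (\varphi\otimes\varphi)\delta_E$ and $\Delta'_E\varphi = (\varphi\otimes\varphi)\Delta_E$ into constraints on $(r,s)$.

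The first simplification is that, because $A$ is a subcoalgebra in both structures of type (c2), one has $\delta_E(a) = \delta_A(a)$ and $\Delta_E(a) = \Delta_A(a)$ for every $a\in A$; hence $\delta'_E\varphi(a) = \delta_A(a) = (\varphi\otimes\varphi)\delta_E(a)$ and similarly for $\Delta$, so no constraint is produced on elements of $A$. All identities arise from testing on $x\in V$. I would compute $\delta'_E\varphi(x)$ by expanding $\delta'_E(r(x)+s(x)) = \delta_A(r(x)) + \delta'_V(s(x)) + \psi'(s(x)) - \tau\psi'(s(x)) + q'(s(x))$, and compute $(\varphi\otimes\varphi)\delta_E(x)$ by applying $\varphi\otimes\varphi$ termwise to $x_{[1]}\otimes x_{[2]} + x_{\langle0\rangle}\otimes x_{\langle1\rangle} - x_{\langle1\rangle}\otimes x_{\langle0\rangle} + x_{1q}\otimes x_{2q}$.

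The key step is then to project onto the four direct summands $A\otimes A$, $A\otimes V$, $V\otimes A$, $V\otimes V$ of $E\otimes E$ and match terms. The $V\otimes V$ projection produces the condition on $\delta_V'(s(x))$; the $V\otimes A$ projection (and, symmetrically up to the flip $\tau$, the $A\otimes V$ projection) produces the condition on $\psi'(s(x))$; the $A\otimes A$ projection collects the genuinely Lie part $\delta'_A(r(x))$ together with the cycle part $q'(s(x))$ into the third condition. The same analysis applied to $\Delta'_E\varphi(x) = (\varphi\otimes\varphi)\Delta_E(x)$ yields the remaining four conditions, one for each of the four summands, involving $\Delta'_V(s(x))$, $\beta'(s(x))$, $\alpha'(s(x))$, and $\Delta'_A(r(x)) + t'(s(x))$.

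Finally, the inverse assignment sending $(r,s)$ to $\varphi_{r,s}$ is clearly mutually inverse to $\varphi\mapsto(p\circ\varphi|_V, \pi\circ\varphi|_V)$, establishing the bijection. With respect to the decomposition $E = A\oplus V$ the map $\varphi_{r,s}$ has block form $\bigl(\begin{smallmatrix} \id_A & r \\ 0 & s \end{smallmatrix}\bigr)$, whence $\varphi_{r,s}$ is bijective if and only if $s$ is. I do not expect any real obstacle: the argument is essentially a bookkeeping exercise of separating tensor components, routine in the same way as Lemma \ref{lem-c1}, and the main (mild) care is in correctly pairing the antisymmetric pieces $\psi' - \tau\psi'$ so as to see that the $A\otimes V$ and $V\otimes A$ projections yield the same constraint.
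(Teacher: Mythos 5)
Your proposal is correct and follows essentially the same route as the paper: determine $\varphi$ by the pair $(r,s)$, observe that the conditions on $A$ are automatic since $A$ is a subcoalgebra in both type (c2) structures, and then expand $\delta'_E\varphi(x)$ versus $(\varphi\otimes\varphi)\delta_E(x)$ (and likewise for $\Delta$) and match components in $A\otimes A$, $A\otimes V$, $V\otimes A$, $V\otimes V$ to obtain the listed identities. Your added remarks on the block-triangular form of $\varphi_{r,s}$ and on the antisymmetric pairing of $\psi'-\tau\psi'$ are consistent with, and slightly more explicit than, the paper's computation.
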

\begin{proof} The proof is similar as the proof of Lemma \ref{lem-c1}.
Let $\varphi: A^{ }\# {}^{q, t} V\rightarrow A^{}\# {}^{q', t'} V$  be a noncommutative Poisson   coalgebra homomorphism  whose restriction on ${A}$ is the identity map.
First  it is easy to see that  $\delta'_E\varphi(a)=(\varphi\otimes \varphi)\delta_E(a)$ for all $a\in {A}$.
Then we consider that $\delta'_E\varphi(x)=(\varphi\otimes \varphi)\delta_E(x)$ for all $x\in V$.
\begin{eqnarray*}
\delta'_E\varphi(x)&=&\delta'_E({r}(x), {s}(x))=\delta'_E({r}(x))+\delta'_E({s}(x))\\
&=&\delta'_A({r}(x))+\delta'_V({s}(x))+{\psi}'({s}(x))-{\tau\psi}'({s}(x))+{q}'({s}(x)),
\end{eqnarray*}
and
\begin{eqnarray*}
&&(\varphi\otimes \varphi)\delta_E(x)\\
&=&(\varphi\otimes \varphi)(\delta_V(x)+{\psi}(x)-{\tau\psi}(x)+{q}(x))\\
&=&(\varphi\otimes \varphi)(x\bi \ot x\bii+ x_{\langle0\rangle} \ot x_{\langle1\rangle}-x_{\langle1\rangle} \ot x_{\langle0\rangle}+q(x))\\
&=&r(x\bi)\ot r(x\bii)+r(x\bi)\ot s(x\bii)+s(x\bi)\ot r(x\bii)+s(x\bi)\ot s(x\bii)\\
&&-x_{\langle1\rangle}\ot r(x_{\langle0\rangle})-x_{\langle1\rangle}\ot s(x_{\langle0\rangle})+r(x_{\langle0\rangle})\ot x_{\langle1\rangle}+s(x_{\langle0\rangle})\ot x_{\langle1\rangle}+{q}(x).
\end{eqnarray*}
Thus we obtain that $\delta'_E\varphi(x)=(\varphi\otimes \varphi)\delta_E(x)$ if and only if the conditions \eqref{comorph2},  \eqref{comorph3} and \eqref{comorph4} hold.

First  it is easy to see that  $\Delta'_E\varphi(a)=(\varphi\otimes \varphi)\Delta_E(a)$ for all $a\in {A}$.
Then we consider that $\Delta'_E\varphi(x)=(\varphi\otimes \varphi)\Delta_E(x)$ for all $x\in V$.
\begin{eqnarray*}
\Delta'_E\varphi(x)&=&\Delta'_E({r}(x), {s}(x))=\Delta'_E({r}(x))+\Delta'_E({s}(x))\\
&=&\Delta'_A({r}(x))+\Delta'_V({s}(x))+{\al}'({s}(x))+{\beta}'({s}(x))+{t}'({s}(x)),
\end{eqnarray*}
and
\begin{eqnarray*}
&&(\varphi\otimes \varphi)\Delta_E(x)\\
&=&(\varphi\otimes \varphi)(\Delta_V(x)+{\al}(x)+{\beta}(x)+{t}(x))\\
&=&(\varphi\otimes \varphi)(x\li \ot x\lii+ x\qoi \ot x\qoo+x\qoo \ot x\qi+{t}(x))\\
&=&r(x\li)\ot r(x\lii)+r(x\li)\ot s(x\lii)+s(x\li)\ot r(x\lii)+s(x\li)\ot s(x\lii)\\
&&+x\qoi\ot r(x\qoo)+x\qoi\ot s(x\qoo)+r(x\qoo)\ot x\qi+s(x\qoo)\ot x\qi+{t}(x).
\end{eqnarray*}
Thus we obtain that $\Delta'_E\varphi(x)=(\varphi\otimes \varphi)\Delta_E(x)$ if and only if the conditions  \eqref{comorph5}, \eqref{comorph6},  \eqref{comorph7} and \eqref{comorph8} hold.
By definition, we obtain that $\varphi=\varphi_{r, s}$ is an isomorphism if and only if $s: V\rightarrow V$ is a linear isomorphism.
\end{proof}

Let $({A}, \Delta_A, \delta_A)$ be a noncommutative Poisson  coalgebra and $V$  a vector space. Two noncommutative Poisson   coalgebraic extending systems $\Omega^{(i)}({A}, V)$ and ${\Omega'^{(i)}}({A}, V)$  are called equivalent if $\varphi_{r, s}$ is an isomorphism.  We denote it by $\Omega^{(i)}({A}, V)\equiv{\Omega'^{(i)}}({A}, V)$.
From the above lemmas, we obtain the following result.
\begin{theorem}\label{thm3-2}
Let $({A}, \Delta_A, \delta_A)$ be a noncommutative Poisson   coalgebra, $E$ be a vector space containing ${A}$ as a subspace and
$V$ be a ${A}$-complement in $E$. Denote $\mathcal{HC}(V, {A}): =\mathcal{C}^{(3)}({A}, V)\sqcup\mathcal{C}^{(4)}({A}, V) /\equiv$. Then the map
\begin{eqnarray*}
&&\Psi: \mathcal{HC}_{{A}}^2(V, {A})\rightarrow CExtd(E, {A}),\\
&&\overline{\Omega^{(3)}({A}, V)}\mapsto A^{p, s}\# {}^{} V,
 \quad \overline{\Omega^{(4)}({A}, V)}\mapsto A^{}\# {}^{q, t} V
\end{eqnarray*}
is bijective, where $\overline{\Omega^{(i)}({A}, V)}$ is the equivalence class of $\Omega^{(i)}({A}, V)$ under $\equiv$.
\end{theorem}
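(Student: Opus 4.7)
The plan is to deduce Theorem \ref{thm3-2} by assembling the four preparatory lemmas just proved in this subsection, in complete parallel with the algebraic case settled in Theorem \ref{thm3-1}. The key conceptual input is that, by the definition of extending system for noncommutative Poisson coalgebras, every noncommutative Poisson coalgebra structure on $E$ which has $A$ as a subspace must fall into exactly one of two cases: either the canonical projection $p\colon E \to A$ is a coalgebra homomorphism (type $(c1)$), which corresponds to an extending datum in $\mathcal{C}^{(3)}(A, V)$, or the canonical injection $i\colon A \to E$ is a coalgebra homomorphism, i.e., $A$ is a subcoalgebra of $E$ (type $(c2)$), which corresponds to an extending datum in $\mathcal{C}^{(4)}(A, V)$. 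This dichotomy is what produces the disjoint union $\mathcal{C}^{(3)}(A,V)\sqcup\mathcal{C}^{(4)}(A,V)$ in the statement.

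The first step is to verify that $\Psi$ is well defined on equivalence classes. This is immediate from Lemma \ref{lem-c1} for the type $(c1)$ branch and Lemma \ref{lem-c2} for the type $(c2)$ branch: these lemmas precisely describe when two extending datums of the same type produce isomorphic unified coproducts through an isomorphism stabilizing $A$, namely when they are related by a pair $(r,s)$ with $s$ bijective satisfying the listed compatibility identities, which is exactly what the equivalence relation $\equiv$ encodes.

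The second step is surjectivity. Given any noncommutative Poisson coalgebra structure $(E,\Delta_E,\delta_E)$ extending $A$, the extending system definition places us in case $(c1)$ or case $(c2)$; Lemma \ref{lem:33-3} then produces an extending datum $\Omega^{(3)}(A,V) \in \mathcal{C}^{(3)}(A,V)$ together with a coalgebra isomorphism $E \cong A^{p,s}\#{}^{}V$ in the first case, while Lemma \ref{lem:33-4} produces an extending datum $\Omega^{(4)}(A,V) \in \mathcal{C}^{(4)}(A,V)$ together with a coalgebra isomorphism $E \cong A\#{}^{q,t}V$ in the second. In either case the corresponding equivalence class in $\mathcal{HC}(V,A)$ maps onto the given class in $CExtd(E,A)$.

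The third step, injectivity, is where the main subtlety lies and is the step I expect to be the main obstacle. Within a fixed type, injectivity follows directly from Lemmas \ref{lem-c1} and \ref{lem-c2}: any stabilizing isomorphism between two unified coproducts of the same type must have the form $\varphi_{r,s}$ with $s$ invertible, forcing the two datums to be $\equiv$-related. The delicate point is to rule out that a structure of type $(c1)$ could be stabilizing-isomorphic to a structure of type $(c2)$ in a non-trivial way, since an isomorphism of that sort would force $p$ to simultaneously be a coalgebra map (from the source) and to not be one in general (from the target). The resolution is that under a stabilizing isomorphism $\varphi$ the decomposition $E = A \oplus V$ is preserved, so the type is an invariant of the equivalence class, making the disjoint union $\sqcup$ descend correctly to the quotient and giving a bijection $\Psi$ as claimed.
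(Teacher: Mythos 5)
Your proposal is correct and follows essentially the same route as the paper, which simply states that the theorem follows ``from the above lemmas'': well-definedness and injectivity within each type come from Lemmas \ref{lem-c1} and \ref{lem-c2}, and surjectivity from Lemmas \ref{lem:33-3} and \ref{lem:33-4}, exactly as you assemble them. Your third step even goes slightly beyond the paper by flagging and addressing the possibility of cross-type stabilizing isomorphisms, a point the paper leaves implicit in its definition of $\equiv$ on the disjoint union.
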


\subsection{Extending structures for noncommutative Poisson bialgebras}
%%%%%%%%%%%%%%%%%%%
Let $(A,\cdot, [,], \Delta_A, \delta_A)$ be a noncommutative Poisson bialgebra. From (CBB1) and (CBB2) we have the following two cases.

The first case is that we assume $q=0, t=0$ and $\ppr, \ppl, \trr$ to be trivial. Then by the above Theorem \ref{main2}, we obtain the following result.

%%%%%%%%%%%%%%%%%%%%%%%%%%%%%%%%%%%%%
\begin{theorem}\label{thm-41}
Let $(A,\cdot, [ , ], \Delta_A,\delta_A)$ be a noncommutative Poisson bialgebra and $V$ a vector space.
An extending datum of ${A}$ by $V$ of type (I) is
$$\Omega^{(I)}({A},V)=(\rightarrow, \leftarrow, \trl, \phi, \psi, \rho, \gamma, \alpha, \beta, p, s, \theta, \nu,  \cdot_V, [ , ]_V, \Delta_V, \delta_V)$$
 consisting of  linear maps
\begin{eqnarray*}
&&\trl: V\otimes {A}\rightarrow V, ~~~\theta:  A\otimes A \rightarrow {V}, ~~~[ , ]_V:V\otimes V \rightarrow V,\\
&&\phi :A \to V\otimes A, \quad{\psi}: V\to  V\otimes A, ~~~p: A\rightarrow {V}\otimes {V},\\
&&\delta_V: V\rightarrow V\otimes V, ~~~ \rightarrow: A\otimes {V}\rightarrow V, ~~~ \leftarrow: V\otimes {A}\rightarrow V,\\
&&\nu:  A\otimes A \rightarrow {V}, ~~~\cdot_V: V \otimes V \rightarrow V, ~~~\rho :A \to V\otimes A, \\
&&\gamma: A\to  A\otimes V, ~~~\alpha: V\to A\ot V, ~~~\beta: V\to V\ot A,\\
&&{s}: A\rightarrow {V}\otimes {V}, ~~~\Delta_V: V\rightarrow V\otimes V.
\end{eqnarray*}
Then the unified product $A^{p,s}_{}\# {}^{}_{\theta,\nu}\, V$ with product
\begin{align}
[(a, x), (b, y)]&=\big([a, b], \, [x, y]+x\trl b-y\trl a+\theta(a, b)\big),\\
(a, x)\cdot (b, y)&=\big(ab, \, xy+x\leftarrow b+a\rightarrow y+\nu(a, b)\big),
\end{align}
and coproduct
\begin{align}
\delta_{E}(a)&=\delta_{A}(a)+\phi(a)-\tau\phi(a)+p(a),\quad \delta_{E}(x)=\delta_{V}(x)+\psi(x)-\tau\psi(x), \\
\Delta_{E}(a)&=\Delta_{A}(a)+\rho(a)+\gamma(a)+s(a),\quad \Delta_{E}(x)=\Delta_{V}(x)+\alpha(x)+\beta(x)
\end{align}
forms a noncommutative Poisson bialgebra if and only if $A_{}\# {}_{\theta,\nu} V$ forms a noncommutative Poisson algebra, $A^{p,s}\# {}^{} \, V$ forms a noncommutative  Poisson coalgebra and the following conditions are satisfied:
\begin{enumerate}
\item[(E0)] $\bigl(\leftarrow, \, \rightarrow, \, \nu, \, \rho, \, \gamma, \, \alpha, \, \beta, \, s)$ is an algebra extending system of the associative algebra and coassociative coalgebra
$A$ trough $V$ and $\bigl(\trl, \, \theta, \phi, \psi, p \bigl)$ is a Lie extending system of the
Lie algebra and Lie coalgebra $A$ trough $V$,
\item[(E1)] $\phi(a b)+\psi(\nu(a, b))= (a_{\langle-1\rangle}\leftarrow b)\ot  a_{\langle0\rangle}+( a\rightarrow b_{\langle-1\rangle})\ot  b_{\langle0\rangle}+b\loi\ot[a, b\loo]\\
    +a\lmi\ot[b, a\loo]+\nu(a\bi, b)\ot a\bii+\nu(a, b\bi)\ot b\bii$,
\item[(E2)] $\tau\phi(ab)+\tau\psi(\nu(a, b))= a_{\langle0\rangle}b\ot  a_{\langle-1\rangle}+a b_{\langle0\rangle}\ot  b_{\langle-1\rangle}
+b\loo\ot(b\lmi\trl a)+a\loo\ot(a\loi\trl b)\\
-b\li\ot\theta(a,b\lii)-a\lii\ot\theta(b,a\li)$,
\item[(E3)] $\psi(xy)= x_{\langle0\rangle} y\ot x_{\langle1\rangle}+x  y_{\langle0\rangle}\ot  y_{\langle1\rangle}$,
\item[(E4)] $\tau\psi(xy)=-y\qoi\ot[x, y\qoo]-x\qi\ot[y, x\qoo]$,
\item[(E5)] $\delta_V(x\leftarrow b)=(x\bi\leftarrow b)\ot x\bii-(x\leftarrow b_{\langle0\rangle})\ot b_{\langle-1\rangle}+b\loi\ot(x\trl b\loo)\\
-x\lii\ot(x\li\trl b)-\nu(x_{\langle1\rangle}, b)\ot x_{\langle0\rangle}+xb_{1p}\ot b_{2p}+b_{1s}\ot[x, b_{2s}]
+x\qoo\ot\theta(b, x\qoi)$,
\item[(E6)] $\delta_V(a\rightarrow y)=(a\rightarrow y\bi)\ot y\bii-(a_{\langle0\rangle}\rightarrow y)\ot a_{\langle-1\rangle}+a\lmi\ot(y\trl a\loo)\\
-y\li\ot(y\lii\trl a)-\nu(a, y_{\langle1\rangle})\ot y_{\langle0\rangle}+a_{1p}y\ot a_{2p}+a_{2s}\ot[y, a_{1s}]
+y\qoo\ot\theta(a, y\qi)$,
\item[(E7)] $\psi(x\leftarrow b)=(x_{\langle0\rangle}\leftarrow b)\ot x_{\langle1\rangle}+(x\leftarrow b\bi)\ot b\bii
+x b_{\langle-1\rangle}\ot b_{\langle0\rangle}+x\qoo\ot[b, x\qoi]$,
\item[(E8)] $\tau\psi(x\leftarrow b)=x_{\langle1\rangle} b\ot x_{\langle0\rangle}+x\qi\ot(x\qoo\trl b)
-b\loo\ot[x, b\lmi]-b\li\ot(x\trl b\lii)$,
\item[(E9)] $\psi(a\rightarrow y)=a_{\langle-1\rangle}y\ot a_{\langle0\rangle}+( a\rightarrow y_{\langle0\rangle})\ot y_{\langle1\rangle}+y\qoo\ot[a, y\qi]-(a\bi\rightarrow y)\ot a\bii$,
\item[(E10)] $\tau\psi(a\rightarrow y)=ay_{\langle1\rangle}\ot y_{\langle0\rangle}+y\qoi\ot(y\qoo\trl a)
-a\loo\ot[y, a\loi]-a\lii\ot(y\trl a\li)$,
\item[(E11)] $\rho([a, b])+\beta(\theta(a, b))= (a_{\langle-1\rangle}\leftarrow b)\ot  a_{\langle0\rangle}-( b_{(-1)}\trl a)\ot  b_{(0)}
+b\loi\ot[a, b\loo]\\
-a_{\langle-1\rangle}\ot ba_{\langle0\rangle}+\theta(a, b\li)\ot b\lii+\nu(a\bi, b)\ot a\bii$,
\item[(E12)] $\beta([x, y])= [x, y\qoo]\ot y\qi+x_{\langle0\rangle}y\ot x_{\langle1\rangle}$,
\item[(E13)] $\gamma([a ,b])+\alpha(\theta(a,b))= a_{\langle0\rangle}\ot(b\rightarrow a_{\langle-1\rangle})- b_{(0)}\ot( b_{(1)}\trl a)
+[a,b\loo]\ot b\lmi\\
-a\bi\ot \nu(b, a\bii)+b\li\ot\theta(a,b\lii)+\nu(a\bi, b)\ot a\bii +(a_{\langle-1\rangle}\leftarrow b)\ot a_{\langle0\rangle}$,
\item[(E14)] $\al([x,y])= y\qoi\ot[x,y\qoo]+x_{\langle1\rangle}\ot yx_{\langle0\rangle}$,
\item[(E15)] $\Delta_V(x\trl b)=(x\trl b\loo)\ot b\lmi+b\loi\ot(x\trl b\loo)+(x\bi\leftarrow b)\ot x\bii\\
    -x\bi\ot(b\rightarrow x\bii)+[x, b_{1s}]\ot b_{2s}+b_{1s}\ot[x, b_{2s}]-\nu(x_{\langle1\rangle}, b)\ot x_{\langle0\rangle}
    -x_{\langle0\rangle}\ot \nu(b, x_{\langle1\rangle})$,
\item[(E16)] $\Delta_V(y\trl a)=(y\li\trl a)\ot y\lii+y\li\ot(y\lii\trl a)+(a_{\langle0\rangle}\rightarrow y)\ot a_{\langle-1\rangle}\\
    +a_{\langle-1\rangle}\ot(y\leftarrow a_{\langle0\rangle})
    -\theta(a, y\qoi)\ot y\qoo-y\qoo\ot\theta(a, y\qi)-a_{1p}y\ot a_{2p}+a_{1p}\ot ya_{2p}$,
\item[(E17)] $\beta(x\trl b)=(x\trl b\li)\ot b\lii+[x, b\loi]\ot b\loo
-x_{\langle0\rangle}\ot bx_{\langle1\rangle}+(x_{\langle0\rangle}\leftarrow b)\ot x_{\langle1\rangle}$,
\item[(E18)] $\beta(y\trl a)=(y\qoo\trl a)\ot y\qi-y\qoo\ot[a, y\qi]-(a\bi\rightarrow y)\ot a\bii
    -a_{\langle-1\rangle}y\ot a_{\langle0\rangle}$,
\item[(E19)] $\al(x\trl b)=b\li\ot(x\trl b\lii)+ b\loo\ot[x,b\lmi]
    +x_{\langle1\rangle}\ot (b\rightarrow x_{\langle0\rangle})-x_{\langle1\rangle}b\ot x_{\langle0\rangle}$,
\item[(E20)] $\al(y\trl a)= y\qoi\ot (y\qoo\trl a)-[a,y\qoi]\ot y\qoo-a_{\langle0\rangle}\ot ya_{\langle-1\rangle}+a\bi\ot(y\leftarrow a\bii)$,
\item[(E21)] $\delta_{V}(xy)=x\bi y\ot x\bii-(x_{\langle1\rangle}\rightarrow y)\ot x_{\langle0\rangle}
+xy\bi\ot y\bii-(x\leftarrow y_{\langle1\rangle})\ot y_{\langle0\rangle}\\
+y\li\ot[x,y\lii]+y\qoo\ot(x\trl y\qi)+x\lii\ot[y,x\li]+x\qoo\ot(y\trl x\qoi),$
\item[(E22)] $\Delta_{V}([x,y])=[x,y\li]\ot y\lii+(x\trl y\qoi)\ot y\qoo+y\li\ot[x,y\lii]
+y\qoo\ot(x\trl y\qi)\\
+x\bi y\ot x\bii-(x_{\langle1\rangle}\rightarrow y)\ot x_{\langle0\rangle}-x\bi\ot yx\bii-x_{\langle0\rangle}\ot(y\leftarrow x_{\langle1\rangle}).$
\end{enumerate}
Conversely, any noncommutative Poisson bialgebra structure on $E$ with the canonical projection map $p: E\to A$ both a noncommutative Poisson algebra homomorphism and a noncommutative Poisson  coalgebra homomorphism is of this form.
\end{theorem}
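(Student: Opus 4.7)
The plan is to deduce the theorem as a specialization of the cocycle bicrossproduct construction of Theorem \ref{main2}, in which the right actions $\ppr$, $\ppl$ and the left Lie action $\trr$ of $A$ on $V$ are all zero, and in which the algebra cocycles $\sigma,\omega: V\otimes V\to A$ and the coalgebra cycles $q,t: V\to A\otimes A$ likewise vanish identically. Under this specialization the cocycle associativity condition (CC6) collapses to the ordinary Leibniz rule on $A$, and the cycle coassociativity condition (CC8) collapses to the ordinary Poisson coalgebra compatibility on $A$, so $A$ is retained as an honest noncommutative Poisson bialgebra and the cocycle braided conditions (CBB1)--(CBB2) on $A$ reduce to \eqref{eq:LB01}--\eqref{eq:LB02}. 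The resulting unified product is precisely $A^{p,s}\# {}_{\theta,\nu} V$ of the theorem statement.

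With this identification, the associative/Lie part of $A^{p,s}\#{}_{\theta,\nu} V$ is the type (a1) unified product of the algebra subsection, while the coalgebra part is the type (c1) unified coproduct of the coalgebra subsection; these two structural pieces together account for (E0). What remains to verify is the bialgebra compatibility
\[
\delta_E(\alpha\cdot_E\beta) \quad\text{and}\quad \Delta_E([\alpha,\beta]_E),
\]
for $\alpha,\beta\in A\oplus V$. I would expand each identity on the four tensor-component blocks $A\otimes A$, $A\otimes V$, $V\otimes A$, $V\otimes V$ and compare term by term. Equivalently, this is exactly the content of the 24 conditions (CDM1)--(CDM24) of Definition \ref{cocycledmp} together with (CBB3)--(CBB4) for $V$, specialized under our vanishing assumptions. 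The four identities (CDM5), (CDM6), (CDM17), (CDM18) become trivial because their left hand sides involve $\trr$, $\ppr$ or $\ppl$; the remaining twenty collapse to (E1)--(E20), while (CBB3)--(CBB4) yield (E21)--(E22).

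For the converse, assume $(E,[\,,\,],\cdot,\Delta_E,\delta_E)$ is a noncommutative Poisson bialgebra in which $p:E\to A$ is both a Poisson algebra and a Poisson coalgebra homomorphism. Since $p$ is an algebra homomorphism with $p|_A=\id_A$, the kernel $V$ is a two-sided Poisson ideal and in particular a noncommutative Poisson subalgebra, so Lemma \ref{lem:33-1} applies and produces the algebra extending datum $(\leftarrow,\rightarrow,\trl,\theta,\nu,\cdot_V,[\,,\,]_V)$. Dually, $p$ being a coalgebra homomorphism makes $V$ a noncommutative Poisson subcoalgebra, and Lemma \ref{lem:33-3} supplies the coalgebra extending datum $(\phi,\psi,\rho,\gamma,\alpha,\beta,p,s,\Delta_V,\delta_V)$. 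Combining these two data on the same underlying vector space $V$ assembles the full extending structure $\Omega^{(I)}(A,V)$, and the two bialgebra axioms imposed on $E$ force precisely the list (E1)--(E22) by the matching argument above.

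The main obstacle is the purely bookkeeping task of splitting each of the two bialgebra compatibilities into its four tensor-component pieces on $(A\oplus V)^{\otimes 2}$ and confirming that the twenty nontrivial identities surviving from (CDM1)--(CDM24) correspond bijectively to (E1)--(E20), and that the four identities which drop out do so without secretly imposing additional constraints on the retained maps. Beyond this match-up, the argument is a mechanical specialization of Theorem \ref{main2} combined with the classification lemmas of the previous two subsections.
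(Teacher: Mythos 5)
Your proposal is correct and follows essentially the same route as the paper, which obtains Theorem \ref{thm-41} precisely by specializing Theorem \ref{main2} with $\trr$, $\ppr$, $\ppl$, $\sigma$, $\omega$, $q$, $t$ set to zero; your accounting of which conditions survive ((CDM5), (CDM6), (CDM17), (CDM18) trivialize, the remaining twenty give (E1)--(E20), and (CBB3)--(CBB4) give (E21)--(E22)) checks out, and the converse via Lemmas \ref{lem:33-1} and \ref{lem:33-3} matches the paper's setup. The only quibble is terminological: $\trr$, $\ppr$, $\ppl$ are the components of the product valued in $A$ (actions of $V$ on $A$, not of $A$ on $V$), but this does not affect the argument.
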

Note that in this case, $(V, \cdot, [,], \Delta_V, \delta_V)$ is a  braided noncommutative Poisson bialgebra. Although $(A, \cdot, [,], \Delta_A, \delta_A)$ is not a noncommutative Poisson sub-bialgebra of $E=A^{p, s}_{}\# {}^{}_{\theta, \nu}\, V$, but it is indeed a noncommutative Poisson bialgebra and a subspace $E$.
Denote the set of all noncommutative Poisson bialgebraic extending datum of type (I) by $\mathcal{IB}^{(I)}({A},V)$.

The second case is that we assume $p=0, s=0, \theta=0, \nu=0$ and $\phi, \rho, \gamma$ to be trivial. Then by the above Theorem \ref{main2}, we obtain the following result.

\begin{theorem}\label{thm-42}
Let $A$ be a Poisson bialgebra and $V$ a vector space.
An extending datum of ${A}$ by $V$ of type (II) is  $\Omega^{(II)}({A}, V)=(\ppr, \ppl, \rightarrow, \leftarrow, \trr, \trl, \sigma, \omega, \psi, \al, \beta, q, t,  \cdot_V, [,]_V, \delta_V, \Delta_V)$ consisting of  linear maps
\begin{eqnarray*}
&&\trl: V\otimes {A}\rightarrow {V},~~~~\trr: A\otimes {V}\rightarrow V,~~~~\sigma:  V\otimes V \rightarrow {A},\\
&&[,]_V: V\otimes V \rightarrow V,~~~\psi: V\to  V\otimes A,~~~~{q}: V\rightarrow {A}\otimes {A},\\
&&\delta_V: V\rightarrow V\otimes V,~~~~\ppl: A\otimes {V}\rightarrow {A},~~~~\ppr: V\otimes {A}\rightarrow A,\\
&&\rightarrow: A\ot V\to V, ~~~\leftarrow: V\ot A\to V, ~~~\omega: V\otimes V \rightarrow {A},\\
&&\cdot_V: V\otimes V \rightarrow V,~~~~{\alpha}: V\to  A\otimes V, ~~~\beta: V\to V\ot A,\\
&&{t}: V\rightarrow {A}\otimes {A},~~~~\Delta_V: V\rightarrow V\otimes V.
\end{eqnarray*}
Then the unified product $A^{}_{\sigma, \omega}\# {}^{q, t}_{}\, V$ with product
\begin{align}
[(a, x), (b, y)]_E&=\big([a, b]+x\trr b-y\trr a+\sigma(x, y), \, [x, y]+x\trl b-y\trl a\big),\\
(a, x)\cdot_E (b, y)&=\big(ab+x\ppr b+a\ppl y+\omega(x, y), \, xy+x\leftarrow b+a\rightarrow y\big),
\end{align}
and coproduct
\begin{align}
\delta_{E}(a)&= \delta_A(a),~~~\delta_{E}(x)= \delta_{V}(x)+\psi(x)-\tau\psi(x)+q(x),\\
\Delta_{E}(a)&= \Delta_A(a),~~~\Delta_{E}(x)= \Delta_{V}(x)+\al(x)+\beta(x)+t(x),
\end{align}
forms a noncommutative Poisson bialgebra if and only if $A_{\sigma, \omega}\# {}_{} V$ forms a noncommutative Poisson algebra, $A^{}\# {}^{q, t}V$ forms a noncommutative Poisson coalgebra and the following conditions are satisfied:
\begin{enumerate}
\item[(F0)] $\bigl(\ppr, \, \ppl, \, \rightarrow, \, \leftarrow, \, \omega, \, \al, \, \beta, \, t)$ is an algebra extending system of the associative algebra and coassociative coalgebra
$A$ trough $V$ and $\bigl(\trr, \, \trl, \, \sigma, \, \psi, \, q \bigl)$ is a Lie extending system of the
Lie algebra and Lie coalgebra $A$ trough $V$,
\item[(F1)] $\psi(xy)= x_{\langle0\rangle} y\ot x_{\langle1\rangle}+x  y_{\langle0\rangle}\ot  y_{\langle1\rangle}
+ y\qoo\ot(x\trr y\qi)+x\qoo\ot(y\trr x\qoi)\\
+(x_{1q}\rightarrow y)\ot x_{2q}+(x\leftarrow y_{1q})\ot y_{2q}+y\li\ot\sigma(x, y\lii)+x\lii\ot\sigma(y, x\li)$,
\item[(F2)] $\tau\psi(xy)=(x_{\langle1\rangle}\ppl y)\ot x_{\langle0\rangle}+(x\ppr y_{\langle1\rangle})\ot y_{\langle0\rangle}
-y\qoi\ot[x, y\qoo]\\
-x\qi\ot[y, x\qoo]-\omega(x\bi, y)\ot x\bii-\omega(x, y\bi)\ot y\bii-y_{1t}\ot(x\trl y_{2t})-x_{2t}\ot(y\trl x_{1t})$,
\item[(F3)]  $\delta_A(x\ppr b)+q(x\leftarrow b)=( x_{\langle0\rangle}\ppr b)\ot  x_{\langle1\rangle}+(x\ppr b\bi)\ot b\bii-x\qi\ot(x\qoo\trr b)\\
+b\li\ot(x\trr b\lii)+x_{1q}b\ot x_{2q}+x_{2t}\ot[b, x_{1t}]$,
\item[(F4)]  $\delta_A(a\ppl y)+q(a\rightarrow y)=(a\ppl y_{\langle0\rangle})\ot y_{\langle1\rangle}
+(a\bi\ppl y)\ot a\bii-y\qoi\ot(y\qoo\trr a)+a\lii\ot (y\trr a\li)\\
+ay_{1q}\ot y_{2q}+y_{1t}\ot [a, y_{2t}]$,
\item[(F5)] $\delta_V(x\leftarrow b)=(x\bi\leftarrow b)\ot x\bii
-x\lii\ot(x\li\trl b)+b_{1s}\ot[x, b_{2s}]$,
\item[(F6)] $\delta_V(a\rightarrow y)=(a\rightarrow y\bi)\ot y\bii
-y\li\ot(y\lii\trl a)+a_{2s}\ot[y, a_{1s}]$,
\item[(F7)] $\psi(x\leftarrow b)=(x_{\langle0\rangle}\leftarrow b)\ot x_{\langle1\rangle}
+(x\leftarrow b\bi)\ot b\bii-x\lii\ot(x\li\trr b)+x\qoo\ot[b, x\qoi]$,
\item[(F8)] $\tau\psi(x\leftarrow b)=x_{\langle1\rangle} b\ot x_{\langle0\rangle}+x\qi\ot(x\qoo\trl b)
-(x\bi\ppr b)\ot x\bii-b\li\ot(x\trl b\lii)$,
\item[(F9)] $\psi(a\rightarrow y)=( a\rightarrow y_{\langle0\rangle})\ot y_{\langle1\rangle}
+y\qoo\ot[a, y\qi]-(a\bi\rightarrow y)\ot a\bii-y\li\ot(y\lii\trr a)$,
\item[(F10)] $\tau\psi(a\rightarrow y)=ay_{\langle1\rangle}\ot y_{\langle0\rangle}+y\qoi\ot(y\qoo\trl a)
-(a\ppl y\bi)\ot y\bii-a\lii\ot(y\trl a\li)$,
\item[(F11)] $\beta([x, y])= [x, y\qoo]\ot y\qi+y\qoo\ot (x\trr y\qi)
-x_{\langle0\rangle}\ot(y\ppr x_{\langle1\rangle})\\
+x_{\langle0\rangle}y\ot x_{\langle1\rangle}+(x\trl y_{1t})\ot y_{2t}+y\li\ot\sigma(x, y\lii)+(x_{1q}\rightarrow y)\ot x_{2q}-x\bi\ot\omega(y, x\bii)$,
\item[(F12)] $\al([x,y])= y\qoi\ot[x,y\qoo]+ (x\trr y\qoi)\ot y\qoo
-(x_{\langle1\rangle}\ppl y)\ot x_{\langle0\rangle}\\
+x_{\langle1\rangle}\ot yx_{\langle0\rangle}+y_{1t}\ot(x\trl y_{2t})+\sigma(x,y\li)\ot y\lii-x_{1q}\ot(y\leftarrow x_{2q}) +\omega(x\bi, y)\ot x\bii$,
\item[(F13)]  $\Delta_A(x\trr b)+t(x\trl b)=(x\trr b\li)\ot b\lii+b\li\ot(x\trr b\lii)+( x_{\langle0\rangle}\ppr b)\ot  x_{\langle1\rangle}\\
    +x_{\langle1\rangle}\ot (b\ppl x_{\langle0\rangle})+x_{1q}b\ot x_{2q}-x_{1q}\ot bx_{2q}$,
\item[(F14)]  $\Delta_A(y\trr a)+t(y\trl a)=-(a\bi\ppl y)\ot a\bii+a\bi\ot(y\ppr a\bii)+( y\qoo\trr a)\ot  y\qi\\
    +y\qoi\ot (y\qoo\trr a)-[a, y_{1t}]\ot y_{2t}-y_{1t}\ot[a, y_{2t}]$,
\item[(F15)] $\Delta_V(x\trl b)=(x\bi\leftarrow b)\ot x\bii-x\bi\ot(b\rightarrow x\bii)$,
\item[(F16)] $\Delta_V(y\trl a)=(y\li\trl a)\ot y\lii+y\li\ot(y\lii\trl a)$,
\item[(F17)] $\beta(x\trl b)=(x\trl b\li)\ot b\lii-x_{\langle0\rangle}\ot bx_{\langle1\rangle}
    +(x_{\langle0\rangle}\leftarrow b)\ot x_{\langle1\rangle}-x\bi\ot(b\ppl x\bii)$,
\item[(F18)] $\beta(y\trl a)=(y\qoo\trl a)\ot y\qi-y\qoo\ot[a, y\qi]-(a\bi\rightarrow y)\ot a\bii
    +y\li\ot(y\lii\trr a)$,
\item[(F19)] $\al(x\trl b)=b\li\ot(x\trl b\lii)+x_{\langle1\rangle}\ot (b\rightarrow x_{\langle0\rangle})
    +(x\bi\ppr b)\ot x\bii-x_{\langle1\rangle}b\ot x_{\langle0\rangle}$,
\item[(F20)] $\al(y\trl a)= y\qoi\ot (y\qoo\trl a)-[a,y\qoi]\ot y\qoo+(y\li\trr a)\ot y\lii
    +a\bi\ot(y\leftarrow a\bii)$,
\item[(F21)] $\delta_{V}(xy)=x\bi y\ot x\bii-(x_{\langle1\rangle}\rightarrow y)\ot x_{\langle0\rangle}
+xy\bi\ot y\bii-(x\leftarrow y_{\langle1\rangle})\ot y_{\langle0\rangle}\\
+y\li\ot[x, y\lii]+y\qoo\ot(x\trl y\qi)+x\lii\ot[y, x\li]+x\qoo\ot(y\trl x\qoi),$
\item[(F22)] $\Delta_{V}([x, y])=[x, y\li]\ot y\lii+(x\trl y\qoi)\ot y\qoo+y\li\ot[x, y\lii]
+y\qoo\ot(x\trl y\qi)\\
+x\bi y\ot x\bii-(x_{\langle1\rangle}\rightarrow y)\ot x_{\langle0\rangle}-x\bi\ot yx\bii-x_{\langle0\rangle}\ot(y\leftarrow x_{\langle1\rangle}).$
\end{enumerate}
Conversely, any noncommutative Poisson bialgebra structure on $E$ with the canonical injection map $i: A\to E$ both a noncommutative Poisson algebra homomorphism and a noncommutative Poisson coalgebra homomorphism is of this form.
\end{theorem}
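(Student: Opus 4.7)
The plan is to derive this theorem as a specialization of Theorem \ref{main2} on cocycle bicrossproducts, together with a reconstruction argument analogous to Lemmas \ref{lem:33-1} and \ref{lem:33-3}. The hypothesis that the canonical injection $i : A \to E$ is simultaneously a noncommutative Poisson algebra and Poisson coalgebra homomorphism says precisely that $A$ is a sub-bialgebra of $E$. In the general cocycle bicrossproduct $A^{p,s}_{\sigma,\omega}\#\,{}^{q,t}_{\theta,\nu}H$, demanding that $A$ be closed under $\cdot_E$, $[,]_E$, $\Delta_E$, $\delta_E$ forces all $V$-valued maps emanating from $A$ to vanish, namely $\theta = 0$, $\nu = 0$, $p = 0$, $s = 0$, together with the coactions $\phi, \rho, \gamma$. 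What remains is exactly the extending datum $\Omega^{(II)}(A, V)$ in the statement, and the biproduct formulas of Theorem \ref{main2} collapse to the displayed multiplication, bracket, comultiplication and Lie cobracket.

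For the ``if'' direction I would substitute these vanishing data into each clause of Definition \ref{cocycledmp}, Definition \ref{cocycle-braided} and the cocycle/cycle identities (CC1)--(CC8), and match the survivors with (F0)--(F22). Specifically: (CDM1)--(CDM4) become trivial because $\phi$ on $A$ and $\theta, \nu, p, s$ are zero; (CDM5)--(CDM12) reduce to (F3)--(F10); (CDM13)--(CDM16) reduce to (F11)--(F12) after killing the $\rho, \gamma, \theta, s$ terms; (CDM17)--(CDM24) reduce to (F13)--(F20); (CBB1)--(CBB2) are vacuous, while (CBB3)--(CBB4) become (F21)--(F22); the $A$-side cocycle identities are satisfied trivially and the $V$-side ones are absorbed into clause (F0), together with the requirement that $(V, \cdot_V, [,]_V, \Delta_V, \delta_V)$ form a braided noncommutative Poisson bialgebra in ${}^{A}_{A}\mathcal{M}{}^{A}_{A}$. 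Theorem \ref{main2} then gives at once that $E$ is a noncommutative Poisson bialgebra.

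For the converse direction, given any noncommutative Poisson bialgebra structure on $E$ in which $i : A \hookrightarrow E$ is a sub-bialgebra embedding, choose a complement $V$ of $A$ in $E$ with projections $p : E \to A$ and $\pi : E \to V$ satisfying $p \circ i = \id_A$. Define the components of $\Omega^{(II)}(A, V)$ as the natural projections of the operations on $E$, for instance $x \ppr a := p(x \cdot_E a)$, $x \leftarrow a := \pi(x \cdot_E a)$, $\omega(x, y) := p(x \cdot_E y)$, $x \cdot_V y := \pi(x \cdot_E y)$, $\sigma(x, y) := p([x, y]_E)$, $x \trr a := p([x, a]_E)$, $x \trl a := \pi([x, a]_E)$, $[x, y]_V := \pi([x, y]_E)$, and on the coalgebra side $\psi(x) := (\pi \otimes p)\delta_E(x)$, $q(x) := (p \otimes p)\delta_E(x)$, $\delta_V(x) := (\pi \otimes \pi)\delta_E(x)$, $\alpha(x) := (p \otimes \pi)\Delta_E(x)$, $\beta(x) := (\pi \otimes p)\Delta_E(x)$, $t(x) := (p \otimes p)\Delta_E(x)$, $\Delta_V(x) := (\pi \otimes \pi)\Delta_E(x)$. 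Because $A$ is a sub-bialgebra, $\Delta_E, \delta_E, \cdot_E, [,]_E$ restrict to $\Delta_A, \delta_A, \cdot, [,]$ on $A$, so the map $\varphi : A^{}_{\sigma,\omega}\#\,{}^{q,t}\,V \to E$, $(a, x) \mapsto a + x$ is a bijective linear map intertwining all four operations, and the bialgebra axioms on $E$ force (F0)--(F22) via the ``only if'' half of the computation above.

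The main obstacle is bookkeeping rather than a conceptual difficulty: one has to track how each of the roughly sixty conditions encoded in Theorem \ref{main2} reduces under the specialization, and to match the survivors with (F1)--(F22) without losing the embedded associativity and coassociativity identities on $V$ that are hidden inside clause (F0). Once this dictionary is established, no further computation beyond the arguments already contained in the proof of Theorem \ref{main2} is required, and the equivalence classification of such extending structures follows along the same lines as Lemmas \ref{lem:33-1} and \ref{lem:33-3} combined with Lemmas \ref{deform-01} and \ref{lem-c2}.
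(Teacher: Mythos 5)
Your proposal follows the paper's own route exactly: the paper obtains Theorem \ref{thm-42} precisely by specializing Theorem \ref{main2} to the case $p=s=0$, $\theta=\nu=0$ with $\phi,\rho,\gamma$ trivial, and your converse reconstruction of the extending datum from the projections $p:E\to A$, $\pi:E\to V$ mirrors Lemmas \ref{lem:33-1} and \ref{lem:33-4}. One small correction to your dictionary: (CDM3) and (CDM4) do not become trivial under this specialization — they are exactly where (F1) and (F2) come from (only (CDM1), (CDM2) and the $A$-indexed clauses (CDM13), (CDM15) vanish outright) — but this is a labelling slip in the bookkeeping, not a gap in the argument.
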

%%%%%%%%%%%%%%%%%%%%%
Note that in this case, $(A, \cdot, [, ], \Delta_A, \delta_A)$ is a noncommutative Poisson sub-bialgebra of $E=A^{}_{\sigma, \omega}\# {}^{q, t}_{}\, V$ and $(V, \cdot, [,], \Delta_V, \delta_V)$ is a  braided noncommutative Poisson bialgebra. Denote the set of all noncommutative Poisson bialgebraic extending datum of type (II) by $\mathcal{IB}^{(II)}({A}, V)$.

In the above two cases, we find that  the braided noncommutative Poisson bialgebra $V$ play a special role in the extending problem of noncommutative Poisson bialgebra $A$.
Note that $A^{p, s}_{}\# {}^{}_{\theta,\nu}\, V$ and $A^{}_{\sigma, \omega}\# {}^{q, t}_{}\, V$ are all noncommutative Poisson bialgebra structures on $E$.
Conversely, any noncommutative Poisson bialgebra extending system $E$ of ${A}$  through $V$ is isomorphic to such two types.
Now from Theorem \ref{thm-41}, Theorem \ref{thm-42} we obtain the main result of in this section,
which solve the extending problem for noncommutative Poisson bialgebra.

%\begin{theorem}\label{bi3}
%Let $({A},\cdot,\Delta_A)$ be a weighted infinitesimal bialgebra and $E$ a vector space. Suppose that there is a weighted infinitesimal bialgebra structure $(E, \cdot,\Delta_E)$ on $E$ such that $({A},\cdot,\Delta_A)$ is a  subspace of $E$. Then there exists a weighted infinitesimal bialgebraic extending datum $\Omega({A},V)$ of ${A}$ by $V$ such that
%$(E, \cdot,\Delta_E)\cong  A^{\phi,P}_{}\# {}^{\psi}_{\beta, \sigma}\, V$ or $A^{}_{\alpha}\# {}^{\psi,Q}_{\beta, \sigma}\, V$  .
%\end{theorem}
%\begin{proof}
%\end{proof}

\begin{theorem}\label{bim1}
Let $({A}, \cdot, [,], \Delta_A, \delta_A)$ be a noncommutative Poisson bialgebra, $E$ a vector space containing ${A}$ as a subspace and $V$ be a complement of ${A}$ in $E$.
Denote by
$$\mathcal{HLB}(V, {A}): =\mathcal{IB}^{(I)}({A}, V)\sqcup\mathcal{IB}^{(II)}({A}, V)/\equiv.$$
Then the map
\begin{eqnarray}
\notag&&\Upsilon: \mathcal{HLB}(V, {A})\rightarrow BExtd(E, {A}),\\
&&\overline{\Omega^{(I)}({A}, V)}\mapsto A^{p, s}_{}\# {}^{}_{\theta, \nu}\, V, \quad   \overline{\Omega^{(II)}({A}, V)}\mapsto A^{}_{\sigma, \omega}\# {}^{q, t}_{}\, V
\end{eqnarray}
is bijective, where $\overline{\Omega^{(i)}({A}, V)}$ is the equivalence class of $\Omega^{(i)}({A}, V)$ under $\equiv$.
\end{theorem}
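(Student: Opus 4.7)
The plan is to assemble the bijection by combining the cocycle bicrossproduct construction of Theorem \ref{main2} with the two specializations identified in Theorem \ref{thm-41} and Theorem \ref{thm-42}, together with the algebra/coalgebra classifications of Theorem \ref{thm3-1} and Theorem \ref{thm3-2}. The key observation is that the defining condition of an extending system forces a dichotomy: either the canonical projection $p: E \to A$ is a noncommutative Poisson bialgebra homomorphism, or the canonical injection $i: A \to E$ is. This splits $BExtd(E, A)$ into two subsets that will be matched, respectively, with $\mathcal{IB}^{(I)}({A},V)/\equiv$ and $\mathcal{IB}^{(II)}({A},V)/\equiv$.

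First I would verify that $\Upsilon$ is well-defined. Given a datum $\Omega^{(I)}(A,V)$ of type (I), Theorem \ref{thm-41} produces a noncommutative Poisson bialgebra structure on $E = A \oplus V$ for which $p: E \to A$ is a bialgebra homomorphism; symmetrically, a datum $\Omega^{(II)}(A,V)$ of type (II) yields by Theorem \ref{thm-42} a structure for which $A$ is a sub-bialgebra. To check that equivalent datums are sent to equivalent bialgebras, I would invoke the algebra-level Lemma \ref{deform-01} and the coalgebra-level Lemma \ref{lem-c1} (respectively Lemma \ref{lem-c2}): both produce the \emph{same} pair $(r,s)$ of linear maps $r: V \to A$, $s: V \to V$ with $s$ bijective, and the corresponding $\varphi_{r,s}(a,x) = (a + r(x), s(x))$ simultaneously stabilizes $A$ and intertwines the products and coproducts, hence is an isomorphism of noncommutative Poisson bialgebras.

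Next I would prove surjectivity. Let $(E, \cdot_E, [,]_E, \Delta_E, \delta_E)$ be any noncommutative Poisson bialgebra structure on $E$ with $A \subseteq E$. By the definition of extending system there are two cases. If $p: E \to A$ is a bialgebra morphism, apply Lemma \ref{lem:33-1} to extract the algebraic part of the datum and Lemma \ref{lem:33-3} to extract the coalgebraic part; the resulting combined datum satisfies the compatibility conditions (E0)--(E22) of Theorem \ref{thm-41} precisely because $E$ is a noncommutative Poisson bialgebra, and the identity map $(a, x) \mapsto a + x$ gives $E \cong A^{p,s}_{}\# {}^{}_{\theta,\nu}\, V$. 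If instead $i: A \to E$ is a bialgebra morphism (so $A$ is a sub-bialgebra), the cocycle cross product arguments underlying Theorem \ref{thm-42} apply dually, producing a datum of type (II) with $E \cong A^{}_{\sigma,\omega}\# {}^{q,t}_{}\, V$. In each case the datum lies in the expected set.

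For injectivity, suppose $\Upsilon(\overline{\Omega}) = \Upsilon(\overline{\Omega'})$, i.e.\ there is a bialgebra isomorphism $\varphi$ stabilizing $A$ between the two cocycle bicrossproducts. Writing $\varphi(a,x) = (a + r(x), s(x))$, the fact that $\varphi$ respects simultaneously both multiplications and both comultiplications gives exactly the system of relations appearing in Lemma \ref{deform-01} together with Lemma \ref{lem-c1} (or Lemma \ref{lem-c2}); hence the two datums are equivalent in the sense used to define $\mathcal{HLB}(V,A)$. Note that types (I) and (II) cannot be confused under equivalence since the behavior of $\varphi$ on $A$ (namely whether $A$ is preserved setwise versus only modulo $V$) is an invariant of the underlying bialgebra structure.

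The main obstacle I expect is purely bookkeeping: verifying that the long list of compatibility conditions obtained by specializing Theorem \ref{main2} precisely matches, case by case, the identities produced by imposing that $p$ (resp.\ $i$) be a bialgebra homomorphism, and that the $(r,s)$-equivalences from Lemmas \ref{deform-01}, \ref{lem-c1}, \ref{lem-c2} are mutually consistent so that a single pair $(r,s)$ controls the bialgebra equivalence. No new conceptual ingredient is required beyond the cocycle bicrossproduct theorem and the two one-structure classification theorems already established.
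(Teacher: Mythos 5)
Your proposal is correct and follows essentially the same route the paper intends: the theorem is obtained by assembling Theorem \ref{thm-41} and Theorem \ref{thm-42} (which give surjectivity via the dichotomy on whether $p$ or $i$ is a bialgebra morphism) with the $(r,s)$-equivalence lemmas (Lemma \ref{deform-01}, Lemma \ref{lem-c1}, Lemma \ref{lem-c2}) for well-definedness and injectivity. The paper in fact records no further argument beyond this assembly, so your write-up is, if anything, more explicit than the original.
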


\section*{Acknowledgements}
This is a primary edition, something should be modified in the future.

\vskip7pt
\footnotesize{
\noindent Tao Zhang\\
College of Mathematics and Information Science,\\
Henan Normal University, Xinxiang 453007, P. R. China;\\
 E-mail address: \texttt{{zhangtao@htu.edu.cn}}

 \vskip7pt
\footnotesize{
\noindent Fang Yang\\
College of Mathematics and Information Science,\\
Henan Normal University, Xinxiang 453007, P. R. China;\\
 E-mail address: \texttt{{htuyangfang@163.com}}


\begin{thebibliography}{99}
%\itemsep-4pt {\small

\bibitem{AM1}
A. L. Agore, G. Militaru,  \emph{Extending structures I: the level of groups}, {Algebr. Represent. Theory} { 17} (2014), 831--848.

\bibitem{AM2}
A. L. Agore, G. Militaru, \emph{Extending structures II: the quantum version},  J. Algebra { 336} (2011), 321--341.

%\bibitem{pierce}
%A.L. Agore, G. Militaru, \emph{Unified products and split extensions of Hopf algebras}, Contemporary Math. AMS 585 (2013), 1--15.

\bibitem{AM3}
A.L. Agore, G. Militaru, \emph{Extending structures for Lie algebras}, Monatsh. fur Mathematik 174 (2014), 169--193.

\bibitem{AM4}
A. L. Agore, G. Militaru, \emph{Unified products for Leibniz algebras. Applications}, Linear Algebra Appl. { 439} (2013), 2609--2633.


\bibitem{AM5}
A.L. Agore, G. Militaru, \emph{Jacobi and Poisson algebras}, J. Noncommut. Geom 9 (2015), 1295--1342.

\bibitem{AM6}
A. L. Agore, G. Militaru, \emph{Extending structures, Galois groups and supersolvable associative algebras}, Monatsh. Math. {181} (2016), 1--33.

\bibitem{AM7}
A. L. Agore, G. Militaru, \emph{The global extension problem, crossed products and co-flag non-commutative Poisson algebras},  J. Algebra {426} (2015), 1--31.

\bibitem{A1}
M. Aguiar, \emph{Pre-Poisson algebras}, Lett. Math. Phys. 54 (2000), 263--277.

\bibitem{A2}
M. Aguiar, \emph{On the associative analog of Lie bialgebras}, J. Algebra 244 (2001), 492--532.

%\bibitem{A2}
%M. Aguiar, \emph{Infinitesimal Hopf algebras}, New trends in Hopf algebra theory (La Falda, 1999), 1--29,
%Contemp. Math.  267, Amer. Math. Soc.  Providence, RI, 2000.

%\bibitem{A3}
%M. Aguiar, \emph{Infinitesimal bialgebras, pre-Lie and dendriform algebras, Hopf algebras}, 1--33, Lecture Notes in Pure and Appl. Math. 237, Dekker, New York, 2004.


%\bibitem{Bai10} C. Bai,  \emph{Double constructions of Frobenius algebras, Connes cocycles and their duality}, J. Noncommut. Geom. 4 (2010), 475--530.



%\bibitem{BD99}Y.~Bespalov and B.~Drabant,  \emph{Cross product bialgebras - Part I},  J. Algebra { 219} (1999), 466--505.

%\bibitem{BD01}Y.~Bespalov and B.~Drabant,  \emph{Cross product bialgebras - Part II}, J. Algebra { 240} (2001), 445--504.

%\bibitem{BHZ}
%Y. Bruned, M. Hairer, L. Zambotti, \emph{Algebraic renormalisation of regularity structures}, Invent. Math.  215 (2019), 1039--1156.

%\bibitem{Dr86} V. G.~Drinfel'd,    \emph{Quantum groups}. In ``Proceedings International Congress of Mathematicians, August 3-11, 1986, Berkeley, CA" pp. 798--820, Amer. Math. Soc.  Providence, RI, 1987.

%\bibitem{EF}
%K. Ebrahimi-Fard, Rota-Baxter algebras and the Hopf algebra of renormalization, Ph.D. Thesis, Bonn
%University, 2006.

%\bibitem{F1}
%L. Foissy, The infinitesimal Hopf algebra and the poset of planar forests, J. Algebraic Combin. 30 (2009), 277--309.

%\bibitem{F2}
%L. Foissy, The infinitesimal Hopf algebra and the operads of planar forests, Int. Math. Res. Not. IMRN 3  (2010), 395--435.


%\bibitem{JR79}
%S. A. Joni and G.-C. Rota,  \emph{Coalgebras and bialgebras in combinatorics},  Stud. Appl. Math. { 61} (1979), 93--139.

%\bibitem{LR}
%J.-L. Loday and M. O. Ronco, On the structure of cofree Hopf algebras, J. Reine Angew. Math. { 592} (2006), 123--155.

\bibitem{LJ}
J. F. Liu, C. Bai and Y. Sheng, \emph{Noncommutative Poisson bialgebras}, J. Algebra 556(2020), 35--66.

%\bibitem{LJL}
%J. L. Liang, J. F. Liu and C. Bai, \emph{Admissible Poisson bialgebras}, arXiv:2109.10463.
%

%\bibitem{Ma90a}
%S.~Majid,   \emph{Matched pairs of   groups associated to solutions of the Yang-Baxter equations}, Pacific J. Math.  { 141} (1990), 311--332.

%\bibitem{Ma95} S.~Majid,    \emph{Founditions of Quantum Groups}, Cambridge: Cambridge University  Press, 1995.

%\bibitem{Ma00} S.~Majid,   \emph{Braided Lie bialgebras}, Pacific J. Math.  { 192} (2000), 329--356.

\bibitem{Mas00} A.~Masuoka,   \emph{Extensions of Hopf algebras and  Lie bialgebras}, Trans. Amer. Math. Soc.   { 352} (2000), 3837--3879.

\bibitem{Bai11} X. Ni, C. Bai,  \emph{Poisson bialgebras}, J. Math.  Phy. 54(2013), 023515 .

\bibitem{Oh} S.-Q. Oh,  \emph{Poisson enveloping algebras}, Comm. Algebra 27 (1999), 2181--2186.

%\bibitem{Ra85} D.~E.~Radford,  \emph{The structure of Hopf algebras with a projection},  J. Algebra  { 92} (1985), 322--347.


%\bibitem{So96}  Y.~Sommerh\"{a}user,  \emph{Kac-Moody algebras}, Presented at the Ring Theory Conference, Miskolc, Hungary, Preprint, 1996.

%\bibitem{S02}  Y.~Sommerh\"{a}user, \emph{Yetter-Drinfel'd Hopf algebras over groups of prime order}, Lect. Notes Math.  1789, Springer, Berlin, 2002

\bibitem{X94} P.~Xu,   \emph{Noncommutative Poisson algebras},  Amer. J. Math, {116} (1994)(1), 101--125.


\bibitem{Z1}T. Zhang, \emph{Double cross biproduct and bi-cycle bicrossproduct Lie bialgebras}, J. Gen. Lie Theory Appl. { 4} (2010), S090602.

\bibitem{Z2}T. Zhang, \emph{Unified products for braided Lie bialgebras with applications},  J. Lie Theory { 32}(3) (2022), 671--696.

\bibitem{Z3}T. Zhang, \emph{Extending structures for 3-Lie algebras},   Comm. Algebra  { 50}(4)(2022), 1469--1497.

\bibitem{Z4} T. Zhang, \emph{Extending structures for infinitesimal bialgebras}, 	arXiv:2112.11977v1.


\bibitem{ZY}T. Zhang and H. Yao,  \emph{Braided anti-flexible bialgebras},    arXiv:2208.02221,  to appear in J. Alg. Appl.
https://doi.org/10.1142/S0219498824500841

\end{thebibliography}
\end{document}